\numberwithin{equation}{section}
\newtheorem{theorem}{Theorem}[section]
\newtheorem{lemma}[theorem]{Lemma}
\newtheorem{proposition}[theorem]{Proposition}
\newtheorem{definition}[theorem]{Definition}
\theoremstyle{definition}
\newtheorem{remark}[theorem]{Remark}
\theoremstyle{remark}
\newcommand{\Aps}{A^{\dag}}
\newcommand{\Aout}{A^{\ast}}
\newcommand{\norm}[1]{\left\Vert#1\right\Vert}
\newcommand{\abs}[1]{\left\vert#1\right\vert}
\newcommand{\R}{\mathbb{R}}
\newcommand{\N}{\mathbb{N}}
\newcommand{\argmin}{\arg\min}
\DeclareMathOperator{\dist}{dist}
\DeclareMathOperator\outi{out}
\newcommand*\interior[1]{\mathring{#1}}
{\left\{\begin{array}{@{}l@{}}}{\end{array}\right.}
\patchcmd{\abstract}{\scshape\abstractname}{\textbf{\abstractname}}{}{}
\def\@makefnmark{} 
\newcommand{\proofpart}[2]{%
  \par
  \addvspace{\medskipamount}%
  \noindent\emph{Case #1: #2}\par\nobreak
  \addvspace{\smallskipamount}%
  \@afterheading
}
\pgfplotsset{compat=1.18} 
\begin{document}

\title[The inverse obstacle problem for nonlinear inclusions]{The inverse obstacle problem for nonlinear inclusions}

\author[Mottola]{Vincenzo Mottola}
\address{Dipartimento di Ingegneria Elettrica e dell'Informazione \lq\lq M. Scarano\rq\rq, Universit\`a degli Studi di Cassino e del Lazio Meridionale, Via G. Di Biasio n. 43, 03043 Cassino (FR), Italy.}
\email{vincenzo.mottola@unicas.it}

\author[Corbo Esposito]{Antonio Corbo Esposito}
\address{Dipartimento di Ingegneria Elettrica e dell'Informazione \lq\lq M. Scarano\rq\rq, Universit\`a degli Studi di Cassino e del Lazio Meridionale, Via G. Di Biasio n. 43, 03043 Cassino (FR), Italy.}
\email{corbo@unicas.it}

\author[Faella]{Luisa Faella}
\address{Dipartimento di Ingegneria Elettrica e dell'Informazione \lq\lq M. Scarano\rq\rq, Universit\`a degli Studi di Cassino e del Lazio Meridionale, Via G. Di Biasio n. 43, 03043 Cassino (FR), Italy.}
\email{l.faella@unicas.it}

\author[Piscitelli]{Gianpaolo Piscitelli}
\address{Dipartimento di Scienze Economiche Giuridiche Informatiche e Motorie, Universit\`a degli Studi di Napoli Parthenope, Via Guglielmo Pepe, Rione Gescal, 80035 Nola (NA), Italy.}
\email{gianpaolo.piscitelli@uniparthenope.it (corresponding author)}

\author[Prakash]{Ravi Prakash}
\address{Departamento de Matem\'atica, Facultad de Ciencias F\'isicas y Matem\'aticas, Universidad de Concepci\'on, Avenida Esteban Iturra s/n, Bairro Universitario, Casilla 160 C, Concepci\'on, Chile.}
\email{rprakash@udec.cl}

\author[Tamburrino]{Antonello Tamburrino}
\address{Dipartimento di Ingegneria Elettrica e dell'Informazione \lq\lq M. Scarano\rq\rq, Universit\`a degli Studi di Cassino e del Lazio Meridionale, Via G. Di Biasio n. 43, 03043 Cassino (FR), Italy - and - Department of Electrical and Computer Engineering, Michigan State University, East Lansing, MI-48824, USA.}
\email{antonello.tamburrino@unicas.it.}

\setcounter{tocdepth}{1}

\begin{abstract}
The Monotonocity Principle (MP), stating a monotonic relationship between a material property and a proper corresponding boundary operator, is attracting great interest in the field of inverse problems, because of its fundamental role in developing real time imaging methods. Moreover, under quite general assumptions, a MP for elliptic PDEs with nonlinear coefficients has been established. This MP provided the basis for introducing a new imaging method to deal with the inverse obstacle problem, in the presence of nonlinear anomalies. This constitutes a relevant novelty because there is a general lack of quantitative and physic based imaging method, when nonlinearities are present.

The introduction of a MP based imaging method poses a set of fundamental questions regarding the performance of the method in the presence of noise.

The main contribution of this work is focused on theoretical aspects and consists in proving that (i) the imaging method is stable and robust with respect to the noise, (ii) the reconstruction approaches monotonically to a well-defined limit, as the noise level approaches to zero, and that (iii) the limit contains the unknown set and is contained in the outer boundary of the unknown set.

Results (i) and (ii) come directly from the Monotonicity Principle, while results (iii) requires to prove the so-called Converse of the Monotonicity Principle, a theoretical results of fundamental relevance to evaluate the ideal (noise-free) performances of the imaging method. 

The results are provided in a quite general setting for Calder\'on problem, and proved for three wide classes where the nonlinearity of the anomaly can be either bounded from infinity and zero, or bounded from zero only, or bounded by infinity only. These classes of constitutive relationships cover the wide majority of cases encountered in applications.

\vspace{0.2cm}
\noindent {\it Keywords:} Inverse obstacle problem; Nonlinear material; Monotonicity Principle; Converse.
\end{abstract}

\maketitle
\section{Introduction}\label{sec:int}
This paper treats the inverse obstacle problem for elliptic PDEs in the presence of anomalies (the obstacles) described by a nonlinear constitutive relationship and a background described by a linear constitutive relationship (see \Cref{fig:geo}).

\begin{figure}[htp]
    \centering
    \includegraphics[width=0.4\textwidth]{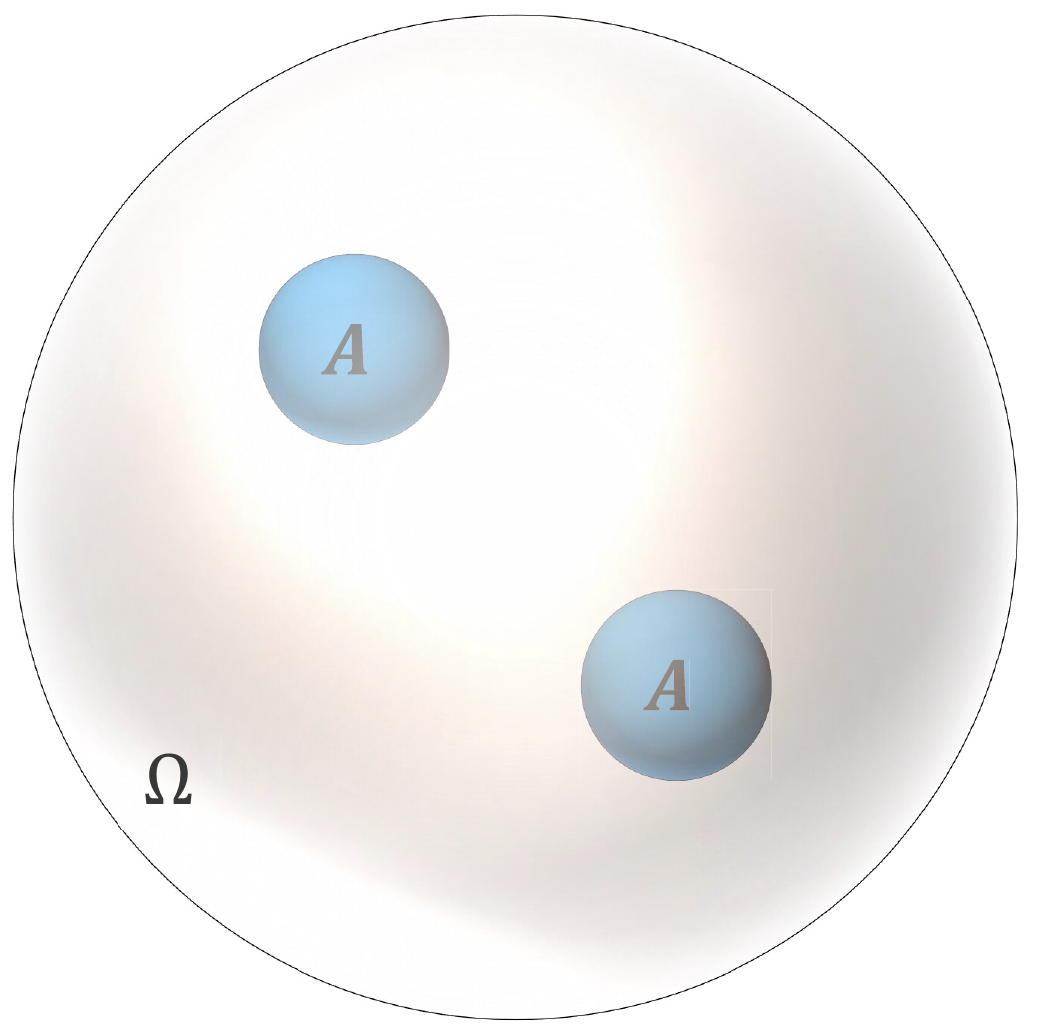}
    \caption{Description of the geometry of the problem: $\Omega$ is an open and bounded domain containing two inclusions (region $A$). The inclusions consist of a nonlinear material. The background consists of a linear material.}
    \label{fig:geo}
\end{figure}

Materials modeled by a nonlinear constitutive relationship are widely spread in different fields. With reference to nonlinear electrical conductivities, superconductive materials represent one of the most relevant examples since they are employed in several different applications such as high-energy storage systems, low-resistance energy transmission systems, and in nuclear fusion experimental facilities (superconductive magnets)~\cite{book:216193,book:1324162}. Nonlinear conductors can also be found in the termination of high voltage cables, where nonlinearity in electrical conductivities allows to effectively control the electric stress and mitigate the occurrence of partial discharges ~\cite{art:Bu08,art:Me18}. It is relevant that human tissues can also exhibit nonlinear electrical conductivity, as is the case during electroporation~\cite{art:tisnl} or for modeling the skin \cite{art:skin_nl}.

Magnetic materials, such as electrical steel or permanent magnets, are characterized by nonlinear magnetic permeability.
One of the most significant examples of application of nonlinear magnetic materials regards electrical machines such as transformers, electrical motors, and electric generators. Other relevant areas of application refer to surveillance and security, as for the detection of magnetic materials in boxes or containers~\cite{book:Dorn18, art:Mar15}, or to the nondestructive inspection of reinforcing bars in concrete \cite{art:Ig03,art:So05}.

Nonlinear dielectrics, as ferroelectric materials, are encountered in manufacturing tunable capacitors~\cite{art:die_nl}. Nonlinear dielectrics are also found in Schottky junctions~\cite{art:diode_nl}.

Composites provide infinite degrees of freedom in material design and their use is growing exponentially with time. A composite material is made by a matrix that embeds a filler. When the filler is nonlinear the overall behavior of the composite is nonlinear. An example is given by ferromagnetic composites in which ferromagnetic powders are embedded in a polymer matrix to accurately design the overall magnetic properties \cite{NOWOSIELSKI2015179}.

The nonlinear inverse obstacle problem treated in this contribution consists in reconstructing the shape, position and dimension of the anomaly $A$, by means of 
measurements carried out on $\partial \Omega$, the boundary of the domain of interest $\Omega$. 
Both the linear material of the background and nonlinear material of the inclusion are known.

The governing equations for the underlying (nonlinear PDE) problem are
\begin{equation}\label{P1}
\begin{cases}
\nabla\cdot\left(\gamma_{BG}(x)\nabla u_{A}(x)\right)=0 & \text{in $\Omega\setminus A$} \\
\nabla\cdot\left(\gamma_{NL}\left(x,\abs{\nabla u_A(x)}\right)\nabla u_A(x)\right)=0 & \text{in $A$} \\
u_{A}(x)=f(x) & \text{on $\partial\Omega$} \\
\gamma_{BG}(x)\partial_n u_{A}(x^+)=\gamma_{NL}\left(x,\abs{\nabla u_A(x^-)}\right) \partial_n u_A(x^-) & \text{on $\partial A$} \\
u_{A}(x^+)=u_A(x^-) & \text{on $\partial A$},
\end{cases}
\end{equation}
where $\Omega\subseteq \mathbb{R}^n$, $n\geq 2$, is a given open and bounded domain, $A\Subset \Omega$ is the region occupied by the nonlinear anomalies (see \Cref{fig:geo}), $u_A$ is the solution in the presence of the anomaly $A$, $\gamma_{BG}$ is the linear material property for the background, $\gamma_{NL}$ is the nonlinear material property for the anomaly.
Solutions $u_A$ and the applied boundary data $f$ belong to proper abstract
spaces (see Section \ref{sec:def} for details).
$u_A(x^+)$, $u_A(x^-)$, $\partial_n u_{A}(x^+)$ and $\partial_n u_{A}(x^-)$ are meant as the limits in $x$ evaluated from the outer $(+)$ or from the inner $(-)$ side of the boundary of $A$.

\Cref{P1} gives the governing equations for a variety of different physical phenomena. In the field of electromagnetism, it is a model for (i) steady-state conduction, where $\gamma=\sigma$ is the nonlinear electrical conductivity, (ii) magnetostatic, where $\gamma=\mu$ is the nonlinear magnetic permeability, and (iii) electrostatic, where $\gamma=\varepsilon$ is the nonlinear electric permittivity.

The mathematical model of \eqref{P1} is relevant because it models different tomographic inspection techniques, such as Electrical Resistance Tomography (ERT), Magnetic Inductance Tomography (MIT), and Electrical Capacitance Tomography (ECT). ERT is applied for biomedical imaging with nonionizing radiations \cite{Ho94} and  industrial process tomography \cite{Ma09}, for example. MIT is used in the detection of magnetic materials in boxes or containers~\cite{book:Dorn18,art:Mar15} or in the inspection of concrete~\cite{art:Ig03,art:So05}, for example. ECT plays an important role in industrial process tomography for imaging multi-phase flow in pipes \cite{7879196}.

Hence, the development of real-time imaging algorithms is of great interest in electromagnetic tomography and applications. Despite this, very few of them are available for various implementations.

Imaging approaches based on Monotonicity Principle, that is the core of this paper, fall in the class of non-iterative imaging methods. Colton and Kirsch introduced the first non-iterative approach named Linear Sampling Method (LSM) \cite{art:Co96} followed by the Factorization Method (FM) proposed by Kirsch \cite{art:Ki98}; Ikehata proposed the Enclosure Method (EM) \cite{ikehata1999draw,Ikehata_2000,art:Ikh02} and Devaney applied MUSIC (MUltiple SIgnal Classification), a well known algorithm in signal processing, as imaging method \cite{Devaney2000}. 

In this framework of real-time imaging methods, a key role is played by the Monotonicity Principle, that states a monotonic relationship between the pointwise value of the spatial distribution of the material property and a proper boundary operator \cite{art:Gi90, art:Ta02}. In the case of Electrical Resistance Tomography (ERT) in the presence of nonlinear materials, the material property is the electrical conductivity, whereas the boundary operator is the so-called average Dirichlet-to-Neumann (ADtN) operator \cite{MPpiecewise,art:Co21}, a suitable generalization of the classical Dirichlet-to-Neumann operator to nonlinear cases. Analogously, an ADtN can be introduced also in the field of MIT and ECT, as deeply discussed in \cite{art:MPNL2024}.

Monotonicity based imaging methods for linear materials find applications in a wide range of problems modeled by different PDEs, from static (elliptic PDEs) to wave propagation (hyperbolic) problem, including quasi-static (parabolic) cases. The Monotonicity Principle Method (MPM)  has first been proposed in \cite{art:Ta02} for ERT, a problem governed by an elliptic PDE, and developed for static problems such as Electrical Capacitance Tomography and Inductance Tomography, as well as Electrical Resistance Tomography \cite{art:calvano2012, Garde01092020, art:garde2022,gardevogelius2024, art:Ta03}. Then, it has been extended to quasi-static regimes governed by elliptic-parabolic PDEs \cite{art:tamburrino2006}, such as Eddy Current Tomography. In the latter case, MPM was proved for Eddy Current Tomography in the low-frequency (large skin-depth) limit \cite{art:Ta06}, in the high-frequency (small skin-depth) limit \cite{Tamburrino_2010} and in time domain (Pulsed Eddy Current Tomography) operations \cite{conf:tamburrino2017,suudpa2023monotonicity, art:Ta17,tamburrino2021monotonicity,art:su2015,art:ventre2015}.

Other extensions of the Monotonicity Principle can be found in~\cite{art:albicker2020,art:albicker2023,art:arens2023,art:daimon2020,art:griesmaier2018,art:Ha19, art:meftahi, AT_WAVE2015} for the Helmholtz equation, in~\cite{art:Ha21} for linear elasticity equations and in \cite{art:kar} for the quasilinear generalizations of the classical biharmonic operator. 

Finally, in~\cite{art:Co21} and~\cite{MPpiecewise}, the Monotonicity Principle has been introduced for nonlinear problems, under quite general assumptions on the material property. The related imaging method, together with realistic numerical examples, can be found in \cite{art:MPNL2024}.

The study of inverse problems involving nonlinear Maxwell's equation arose only in recent years. According to our awareness, there are very only few works on this research topic, as clearly stated in \cite{lam2020consistency}:
\lq\lq... the mathematical analysis for inverse problems governed by nonlinear Maxwell's equations is still in the early stages of development.\rq\rq .

With reference to Electrical Resistance Tomography, some results for the $p$-Laplacian, i.e. when the electrical conductivity is of the type
\begin{equation}
    \sigma(x,E)=\theta(x)E^{p-2},
\end{equation}
are available. Specifically, the inverse problem of retrieving $p-$Laplacian electrical conductivity from boundary measurements was first posed in~\cite{art:Sa12}, where the authors prove that the value of electrical conductivity on the boundary is uniquely determined by a nonlinear DtN operator. In~\cite{art:Bra14} the authors extend the uniqueness result to the first order derivative. Furthermore, an inversion algorithm was given in~\cite{art:Bra15}, where the authors studied the enclosure method for the $p$-Laplacian to reconstruct the convex hull of an inclusion. In~\cite{art:Bra17, art:Bra15, art:Salo16}, an ad-hoc version of the Monotonicity Principle for the $p$-Laplacian was derived, and in~\cite{art:Bra17} a Monotonicity Principle based reconstruction method for retrieving the complex hull of inclusions was proposed. For the sake of completeness, in~\cite{art:Hauer15} the properties of DtN operator, when $\theta(x)=1$, are discussed, while in~\cite{art:Car20} the authors proved that the Calder\'on problem admits a unique solution in the specific case of a nonlinearity given by a linear term plus a $p-$Laplacian term and gave a procedure for reconstructing the electrical conductivity.

The mathematical framework in which the present paper fits is more general than that recently developed in the theoretical contributions listed above. Indeed, the nonlinearity of the anomaly can belong to three different wide classes of constitutive relationships, where the nonlinear
material property, as function of $s=\abs{\nabla u}$, can be
either bounded from infinity and zero, or bounded from zero only, or bounded by
infinity only. In each of these classes, the nonlinearity is not specific but, rather, it may be very general (see \Cref{sec:ass}), as long as the $s \mapsto \gamma_{NL}(x,s)s$ is monotonic for almost every $x \in \Omega$. 

This contribution integrates with the findings of previous works \cite{art:Co21,MPpiecewise,art:MPNL2024}, on the topic of the inverse obstacle problem for nonlinear inclusions in a linear background, via the MP. 

The main contribution of this work consists in proving that (i) the imaging method is stable and robust with respect to the noise, (ii) the reconstruction approaches monotonically to a well-defined limit, as the noise level approaches to zero, and that (iii) the limit contains the unknown set and is contained in the outer boundary of the unknown set.

Results (i) and (ii) come directly from the Monotonicity Principle, while results (iii) requires to prove the so-called Converse of the Monotonicity Principle. The Converse of the Monotonicity Principle, here proved for nonlinear inclusions for the first time, is a theoretical results of fundamental relevance to evaluate the ideal (noise-free) performances of the imaging method. It is a non-trivial results whose proof poses relevant challenges. In few words, the Converse of MP states that an anomaly can be perfectly reconstructed, from noise-free data, apart from the cavities of the anomaly that are not connected by arch to the boundary where the data are collected.

The results are provided in a quite general setting, and proved for three wide classes of constitutive relationships where nonlinearity can be bounded from infinity and zero, or bounded from zero only, or bounded by infinity only. These classes of constitutive relationships cover the wide majority of cases encountered in applications.

The paper is organized as follows. In \Cref{sec:uiaer}, an overview on the key results is given. In
Section \ref{sec:def}, the mathematical foundation of the problem is described. In Section~\ref{sec:main}, the Converse of the MP is proven. This section is divided into four subsections in which the Converse of the Monotonicity Principle is proved for different classes of nonlinearities. In Section \ref{sec:stab}, the intrinsic stability and the robustness of the reconstructions with respect to measurement noise is proved. In Section \ref{sec:conclusions}, the conclusions are drawn.

\section{Overview on key results}
\label{sec:uiaer}
In this Section, the main results achieved in this contribution are briefly described. In \Cref{sec:mpm} it is summarized the Monotonicity Principle, cast for the present setting. In \Cref{sec:reconstruction} the reconstruction method and its features are described. In \Cref{sub_converse} the Converse of the MP and its impact are discussed. 

The region under tomographic inspection is termed $\Omega$. Let $\Omega\subseteq \mathbb{R}^N$, $N\geq 2$, be an open bounded domain with a Lipschitz continuous boundary. Similarly, the region occupied by the anomaly is termed $A$ and it is assumed that $A$ is well contained in $\Omega$, is an open bounded set with a Lipschitz boundary and $\partial A$ is made by a finite number of connected components, i.e. $A \in \mathcal{S}(\Omega)$, where
\begin{equation}\label{SOmega}
\begin{split}
\mathcal{S} (\Omega):= & \left\{ V \subseteq \Omega: V \text{ is an open bounded set with a Lipschitz boundary} \right. \\ 
&\  \left. \text{and } \partial V \text{ is made by a finite number of connected components}  \right\}.
\end{split}
\end{equation}

\subsection{The Monotonicity Principle}\label{sec:mpm}
MP consists in a monotone
relation (see \cite{MPpiecewise,art:Co21,art:MPNL2024} for details) connecting the region occupied by the nonlinear material to the measured boundary operator: 
\begin{equation}
\label{eqn:mono2}
    T\subseteq A \Longrightarrow  \overline{\Lambda}_{T}\leqslant\overline{\Lambda}_{A}.
\end{equation}
In \eqref{eqn:mono2}, it has been assumed that $\gamma_{NL}$ is greater than $\gamma_{BG}$, $A$, $T \in \mathcal{S} (\Omega)$, $\overline{\Lambda}_A$ and $\overline{\Lambda}_T$ are the so-called  \emph{average} DtN operators (see \cite{art:Co21}, ~\cite{MPpiecewise} and  the following \Cref{sec:dtns}). Inequality $\overline{\Lambda}_{T}\leqslant\overline{\Lambda}_{A}$ is intended as
\begin{equation*}
\overline{\Lambda}_{T}\leqslant\overline{\Lambda}_{A} \Longleftrightarrow \left\langle \overline{\Lambda}_{T}(f)-\overline{\Lambda}_{A}(f),f\right\rangle \leq 0 \quad \forall f\in X_\diamond,
\end{equation*}
where $X_\diamond$ is a proper functional space, defined in \Cref{MathModel}. It is worth noting that the average DtNs are nonlinear operators in the presence of nonlinear materials.

\subsection{The reconstruction method and its features}\label{sec:reconstruction}
The imaging method is based on the following equivalent form of \eqref{eqn:mono2}:
\begin{equation}\label{eqn:mono3}
    \overline{\Lambda}_{T}	\nleqslant \overline{\Lambda}_{A}\Longrightarrow T\nsubseteq A.
\end{equation}

Relation \eqref{eqn:mono3} allows to infer when a test domain $T$ is not included in the unknown anomaly $A$, starting from boundary data. Specifically, it elevates inequality $\overline{\Lambda}_{T}	\nleqslant \overline{\Lambda}_{A}$ as a sufficient condition to infer when $T \nsubseteq A$.

From the monotonicity test of~\eqref{eqn:mono3}, a reconstruction method can be obtained by repeating the test for a set of test domains $T$, covering the region of interest, i.e. the estimate $A^{\dag}$ of the inclusion $A$ is$^1$ \footnote{$^1$Symbol $\dag$ is borrowed from the classical theory of ill-posed problem, and it refers to the solution without any type of regularization.}
\begin{align}  \label{eqn:recon}
  A^{\dag}&=\bigcup \left\{T \in \mathcal S(\Omega)\,|\,\overline{\Lambda}_{A}-\overline{\Lambda}_{T}\geqslant 0\right\}. 
\end{align}

It results that $A^{\dag}$ is an upper bound to $A$, i.e.
\begin{equation}\label{eqn:AAdag}
    A \subseteq A^{\dag},
\end{equation}
because $A \in \mathcal S(\Omega)$.

The reconstruction method of \eqref{eqn:recon} is valid in the absence of noise, but in any practical application, the noise corrupts the data giving a void reconstruction, i.e. $A^{\dag} = \emptyset$ (see \Cref{adagnullo}). As discussed in detail in \cite{Garde2017,garde2019theregularizedmm} for linear coefficients and an additive noise model, MP can be naturally regularized and stabilized to treat noisy data and modelling errors.

In this paper, it is showed that noise can be treated also in the nonlinear case. 
Specifically, in line with \cite{art:MPNL2024}, the noisy data are assumed to be modeled as
\begin{equation}\label{eqn:noisemodel}
\mathcal P^{\bm{\eta}}_A(f):=
\mathcal P_A(f) (1+\eta_1\xi_1)+\eta_2\xi_2L,
\end{equation}
where $\mathcal P^{\bm{\eta}}_A(f)$ is the noisy version of the so-called power product:
\begin{equation}\label{eqn:noisemes}
\mathcal P_A(f):=\langle \overline{\Lambda}_A(f),f\rangle,    
\end{equation}
$\eta_1$, $\eta_2$ are two positive constants, $\bm{\eta}=(\eta_1,\eta_2)$, and $\xi_1$, $\xi_2$ are two random variables uniformly distributed in $[-1,1]$. This noise model is common in current digital instruments and equipment.
In \eqref{eqn:noisemodel}, two distinct terms are present: one controlled by $\eta_1$, which is proportional to the measured value, and one controlled by $\eta_2$, which is proportional to the measurement range $L$ of the specific instrument. Both parameters are usually provided by the manufacturer of the instrument. 

In the presence of noise, the reconstruction rule changes in (see \cite{art:MPNL2024} for details)
\begin{align}
\label{eqn:recrulenoise}
{A}^{\bm{\eta}}_{\bm{\eta}} & = \bigcup \left\{ T \in \mathcal S(\Omega) \ :  \frac{\mathcal P^{\bm{\eta}}_A(f)+\eta_2 L}{1-\eta_1}-\mathcal P_T(f)
\geq 0 \ \ \forall f\in X_\diamond\right\},
\end{align}
It is worth noting that ${A}^{\bm{\eta}}_{\bm{\eta}}$ is a random set, because of the presence of the noise.

In \Cref{sec:stab} is proved that
\begin{equation}
\label{eqn:01}
    \Aps \subseteq {A}^{\bm{\eta}}_{\bm{\eta}}\subseteq \mathbb A_{\bm{\eta}}^{\bm{\eta}},
\end{equation}
that $\{ A_{\bm{\eta}_k}^{\bm{\eta}_k} \}_{k \in \mathbb{N}}$ is a monotonic and convergent sequence for $\eta _{1,k}\downarrow 0^{+}$ and $\eta _{2,k}\downarrow 0^{+}$, i.e. 
\begin{align}
\label{eqn:02}
   \mathbb A_{\bm{\eta}_{k+1}}^{\bm{\eta}_{k+1}} & \subseteq \mathbb A_{\bm{\eta}_k}^{\bm{\eta}_k} \\
\label{eqn:03}
   \overline{\lim_{k\to+\infty} \mathbb A_{\bm{\eta}_k}^{\bm{\eta}_k}} & = \overline{\Aps} \\
\label{eqn:conv}
   \overline{\lim_{k\to+\infty}  A^{{\bm{\eta}_k}}_{\bm{\eta}_k}} & = \overline{\Aps},
\end{align}
where $\mathbb A_{\bm{\eta }}^{\bm{\eta }}$ is a properly defined deterministic set. Specifically, the set $\mathbb A_{\bm{\eta }}^{\bm{\eta }}$ is deterministic and it is equal to $A_{\bm{\eta }}^{\bm{\eta }}$, when all realizations of the random variables $\xi _{1}$ and $\xi _{2}$ are equal to $1$
(for the precise definition see \Cref{sub_sec_regularization}).

Equations \eqref{eqn:01} and \eqref{eqn:03} prove that the imaging method is stable. Indeed, the reconstruction ${A}_{\bm{\eta}}^{\bm{\eta}}$ is constrained within a lower bound $\Aps$ and an upper bound $A_{\bm{\eta}}^{\bm{\eta}}$, regardless the noise realization. Moreover, the upper bound tends monotonically (see \eqref{eqn:02}) to the lower bound, as the noise level $\bm{\eta}$ approaches zero (see \eqref{eqn:03}), thus implying that the reconstruction ${A}_{\bm{\eta}}^{\bm{\eta}}$ approaches $\Aps$, the theoretical limit in ideal conditions.
The MP based imaging method is, therefore, \emph{unconditionally stable} with respect to the noise, even in the presence of nonlinear materials.


\subsection{The Converse of MP}\label{sub_converse}
In this paper, a key contribution is the proof of the Converse of the Monotonicity Principle in the presence of nonlinear materials (see \Cref{sec:main}). Specifically, it is proved that $\overline{\Lambda}_{T}	\nleqslant \overline{\Lambda}_{A}$ is a necessary condition for $T \nsubseteq A^{\ast}$, i.e.
\begin{equation}\label{eqn:conv0}
    T\nsubseteq A^{\ast} \Longrightarrow \overline{\Lambda}_A \nleqslant \overline{\Lambda}_T,
\end{equation}
where $A^{\ast}$ is the so-called outer support of the anomaly $A$, introduced in \cite{art:Ha13} and discussed in \Cref{sec:def}. Intuitively, set $A^{\ast}$ coincides with $A$, plus all internal cavities (of $A$) that are not touching $\partial\Omega$ (see Section \ref{sec:def} and Figure \ref{fig:outerM} for details).

In the framework of linear inclusions embedded in a linear background, the converse has been proven in~\cite{art:Ha13}. This work proves the above mentioned results in the more general and complex case of nonlinear inclusions, extending the range of applications for electrical and magnetic tomography and real-time inversion methods. The class of nonlinearities that can be treated in this framework is quite wide: the essential requirements are that (i) $s \mapsto \gamma(x,s)s$ has to be a monotonic function in $s$, and that (ii) $\gamma(x,s)$ is bounded by a power function. Proper details are provided in Section \ref{sec:ass}. 
The class of nonlinearities that can be treated within the present framework includes piecewise and rational functions, too.

The Converse of the Monotonicity Principle has a paramount role from the applications perspective, other than from the theoretical one. Indeed, thanks to the theoretical result of the Converse of the MP, it is possible to prove that
\begin{align}
\label{eqn:cmp1}
\Aps & \subseteq A^{\ast}.
\end{align}
and, therefore, \eqref{eqn:cmp1} combined with \eqref{eqn:AAdag}, gives the theoretical limit of the Monotonicity Principle Method in ideal conditions:
\begin{equation}\label{eqn:inclusions}
A \subseteq \Aps \subseteq A^{\ast}. 
\end{equation}
This means that the Monotonicity Principle Method reconstructs $A$, plus some of its internal cavities that are not connected to the boundary $\partial \Omega$ (see \Cref{sub_sec_support} for the concept of outer support). In any case, the reconstruction never exceeds the outer support $A^{\ast}$, setting the theoretical limit of the capabilities of the method.

Equation \eqref{eqn:conv}, combined with \eqref{eqn:inclusions}, proves that, even in the presence of noise, the imaging rule of \eqref{eqn:recrulenoise} reconstructs a set bounded by $A$ and $\Aout$, as aforementioned.

\section{Mathematical Framework}\label{sec:def}
This contribution is focused on a nonlinear inverse obstacle problem of great interest in applications, consisting in retrieving nonlinear anomalies embedded in a known linear background. The considered material property is, therefore, given by
\begin{equation}\label{eqn:sigmaa}
  \gamma_A(x,s)=\begin{cases}
   \gamma_{BG}(x) & \text {in $\Omega\setminus A$}, \\
   \gamma_{NL}(x,s) & \text {in $A$}, \\
  \end{cases}
\end{equation}
where $A\subset \Omega$ is the unknown region occupied by the nonlinear anomalies.

\subsection{Assumptions}\label{sec:ass}

Before giving the assumptions on the material property \eqref{eqn:sigmaa}, it is convenient to recall the definition of the Carath\'eodory function.
\begin{definition}
\label{def:asmp}
$\gamma:\Omega\times [0,+\infty)\to\mathbb R$ is a Carath\'eodory function in $\Omega$ iff:
\begin{itemize}
\item $x\in\Omega\mapsto \gamma(x,s)$ is measurable for every $s\in[0,+\infty)$,
\item $s\in [0,+\infty)\mapsto \gamma(x, s)$ is continuous for almost every $x\in\Omega$.
\end{itemize}
\end{definition}

It is assumed that $\gamma_{BG}\in L^{\infty}_+(\Omega)=\{u\in L^{\infty}(\Omega) : u\geq c_0>0\text{ a.e. in $\Omega$}\}$ and $\gamma_{NL}: A\times[0,+\infty)\to\mathbb{R}$ satisfies the following assumptions (see~\cite{MPpiecewise}):
\begin{enumerate}
\item[{\bf (A1)}] $\gamma_{NL}$ is a Carath\'eodory function in $\Omega$;
\item[{\bf (A2)}] $s\in [0,+\infty) \mapsto \gamma_{NL}(x,s)s$ is strictly increasing for a.e. $x\in A$.
\item[{\bf (B1)}] 
There exist two positive constants $\underline{\gamma}\le\overline{\gamma}$
such that
\begin{equation*}
\underline{\gamma}\leq\gamma_{NL}(x,s)\leq \overline{\gamma}\quad \text{ for a.e. } x\in A\ \text{and}\ \forall s>0.
\end{equation*}
\item[{\bf (B2)}] For fixed $1<q<+\infty$, there exist three positive constants $\underline{\gamma}\le\overline{\gamma}$ and $s_0$ 
such that
\begin{equation*}  
\underline{\gamma} \leq\gamma_{NL}(x, s)\leq 
\begin{cases}
    \overline{\gamma}\left[1+\left( \frac{s}{s_0} \right)^{q-2}\right] & \text{if}\ q\ge 2,\\
    \overline{\gamma}\left( \frac{s}{s_0} \right)^{q-2} &  \text{if}\ 1<q< 2,
\end{cases}
\end{equation*}
$\text{for a.e.}\ x\in {\overline A}\ \text{and}\ \forall s>0$.
\item[{\bf (B3)}] For fixed $2\leq q<+\infty$, there exist three positive constants $\underline{\gamma}\le\overline{\gamma}$ and $s_0$ 
such that
\begin{equation*}   
\underline{\gamma} \left(\frac{s}{s_0}\right)^{q-2}\leq \gamma_{NL}(x, E)\leq \overline{\gamma}
\end{equation*}
$\text{for a.e.}\ x\in {\overline A}\ \text{and}\ \forall s>0$.
\item[{\bf (C1)}] 
There exists a constant $\kappa>0$ such that
\begin{equation*}
(\gamma_{NL}(x,s_2){\bf s}_2-\gamma_{NL}(x,s_1){\bf s}_1)\cdot( {\bf s}_2-{\bf s}_1)\geq\kappa|{\bf s}_2-{\bf s}_1|^2
       \end{equation*}
       \ $\text{for a.e.}\ x\in A$ and for any ${\bf s}_1,{\bf s}_2\in\mathbb{R}^n$.
\item[{\bf (C2)}] For fixed $1<q<+\infty$, there exists a constant $\kappa>0$ such that
\begin{equation*}\begin{split}(\gamma_{NL}(x,s_2)&{\bf s}_2-\gamma_{NL}(x,s_1){\bf s}_1)\cdot( {\bf s}_2-{\bf s}_1)\\        &\geq        \begin{cases}       \kappa|{\bf s}_2-{\bf s}_1|^q\ &\text{if} \ q\geq 2\\ \kappa(1+ |{\bf s}_2|^2+|{\bf s}_1|^2)^\frac{q-2}2|{\bf s}_2-{\bf s}_1|^2\ &\text{if}\ 1<q<2\end{cases}\\\end{split}\end{equation*} \ $\text{for a.e.}\ x\in A$ and for any ${\bf s}_1,{\bf s}_2\in\mathbb{R}^n$.
\item[{\bf (C3)}] For fixed $2\leq q<+\infty$, there exists a constant $\kappa>0$ such that
\begin{equation*}(\gamma_{NL}(x,s_2){\bf s}_2-\gamma_{NL}(x,s_1){\bf s}_1)\cdot( {\bf s}_2-{\bf s}_1)\geq           \kappa|{\bf s}_2-{\bf s}_1|^q.
\end{equation*} \ $\text{for a.e.}\ x\in A$ and for any ${\bf s}_1,{\bf s}_2\in\mathbb{R}^n$.
\end{enumerate}
The above hypothesis take into account bounded and possibly unbounded or vanishing nonlinear material properties. Assumptions (A1) and (A2) hold in each case. Assumptions (B1)(C1), (B2)(C2) and (B3)(C3) are alternative to each other.

\begin{remark}
The assumptions are largely general, in the sense that they can accommodate wide classes of nonlinearities. Examples are polynomial nonlinearities (see \cite{art:Co21}) or sigmoids, for instance.

From a general perspective, assumptions (Ax) are required to obtain the \emph{existence and uniqueness} of the solution of the forward problem, i.e. of the scalar potential $u$. Assumptions (Bx) provide upper and/or lower bounds to the material property. Assumption (Cx) corresponds to the strict monotonicity of the vector-valued function $\mathbf{s} \mapsto \gamma_{NL}(\mathbf{s})$ (see \cite{Deimling1985} for the concept of monotonicity of operators and vector-valued function).
Summing up, other than quite standard requirement on function $\gamma_{NL}$, it is required (i) the control of $\gamma_{NL}$ from below and/or above (assumptions Bx) and (ii) the strict monotonicity of the vector-valued mapping (assumptions Cx).
\end{remark}

\subsection{The mathematical model}
 \label{MathModel}

In the mathematical model \eqref{P1} the prescribed Dirichlet data $f$ is an element of \[
X_{\diamond}=\left\{g\in H^{1/2}(\partial\Omega): \int_{\partial\Omega}g\, dS=0\right\}
\]
and $\partial_\nu$ denotes the outer normal derivative on $\partial\Omega$. 
It is worth noting that 
$u_A$ belongs to 
$H^1(\Omega)$. 

Problem~\eqref{P1} is understood in the weak form, i.e.
\begin{equation}\label{weakform}
    \int_{\Omega\setminus A}\gamma_{BG}(x)\nabla u_{ A}(x)\cdot \nabla\psi (x)\,dx+\int_{A}\gamma_{NL}(x,\nabla u_A(x))\nabla u_A(x)\cdot \nabla\psi(x)\,dx=0 
\end{equation}
for any $\psi \in C^{\infty}_0(\Omega)$.
The unique weak solution $u_A$ of the problem~\eqref{weakform} is variationally characterized as
\begin{equation}
u_A=\argmin{\{\mathbb{E}_A(u) : u\in H^{1}(\Omega),\,  u\rvert_{\partial\Omega}=f\in X_{\diamond}\}}.
\label{eqn:varprob}
\end{equation}
The functional $\mathbb{E}_A$ to be minimized is the Dirichlet Energy
\begin{equation}
\label{eqn:Ea}
\mathbb{E}_A(u):=\int_{\Omega}\int_0^{\abs{\nabla u(x)}}\gamma_A(x,\eta)\eta\,d\eta\,dx.
\end{equation}
By recalling \eqref{eqn:sigmaa}, it results that
\begin{equation*}
\mathbb{E}_A(u)=\int_{\Omega\setminus A} Q_{\gamma_{BG}}\left(x,\abs{\nabla u(x)}\right)\,dx+\int_{A} Q_{\gamma_{NL}}\left(x,\abs{\nabla u(x)}\right)\,dx
\end{equation*}
where
\begin{align*}
&Q_{\gamma_{NL}}(x,s)=\int_{0}^{s} \gamma_{NL}(x,\eta)\eta\,d\eta\quad\text{for a.e. $x\in A$ and $\forall s\geq 0$},\\
&Q_{\gamma_{BG}}(x,s)=\frac{1}{2}\gamma_{BG}(x)s^2 \quad \text{for a.e. $x\in\Omega\setminus A$ and $\forall s\geq 0$}.
\end{align*}
Existence and uniqueness of the solution of \eqref{weakform} is discussed in \cite{MPpiecewise}.

\subsection{The DtN operators}
\label{sec:dtns}
The Dirichlet-to-Neumann (DtN) operator maps the Dirichlet data into the corresponding Neumann data:
\begin{equation*}
\Lambda_A  :f\in X_\diamond\mapsto \gamma_A(x, |\nabla u_A|)\ 
\partial_nu_A|_{\partial\Omega} 
\in X_\diamond',
\end{equation*}
where $X_\diamond'$ is the dual space of $X_\diamond$ and $u_A$ is the solution of \eqref{P1}. 
From a physical point of view, the DtN operator maps the imposed boundary data to the quantity measured on the boundary $\partial\Omega$. For instance, in ERT the DtN maps the imposed boundary electric scalar potential to the normal component of the electrical current density entering $\partial \Omega$ (see \cite{art:MPNL2024} for a detailed discussion in the case of MIT and ECT).

In weak form, the DtN operator is
\begin{equation}
\label{w-DtN}
\langle \Lambda_A  \left( f\right) ,\psi\rangle
=\int_{\partial \Omega }\psi (x) \gamma_A\left( x, \left\vert \nabla u_A(x)\right\vert\right)  \partial_n u_A(x)\,\text{d}S\quad\forall \psi\in X_\diamond.
\end{equation}

Furthermore, by testing the DtN operator \eqref{w-DtN} with the solution $u$ of \eqref{P1} and using a divergence Theorem, it results 
\begin{equation}\label{w-DtN-f}
\langle \Lambda_A  \left( f\right) ,f\rangle
=\int_{\Omega } \gamma_A( x ,\nabla u_A(x)){ |\nabla u_A}(x)|^2\ \text{d}x.
\end{equation}

The Average DtN (ADtN) is defined as (see \cite{art:Co21,MPpiecewise})
\begin{equation}
\label{P4}
\overline{\Lambda}_A: f\in X_{\diamond} \to \int_0^1\Lambda_A(\alpha f)\,d\alpha\in X'_\diamond,\qquad \overline{\Lambda}_T: f\in X_{\diamond} \to \int_0^1\Lambda_T(\alpha f)\,d\alpha\in X'_\diamond,
\end{equation}
where
\begin{equation}
\label{P3}
    \Lambda_A:f\in X_{\diamond}\to \gamma_{A}\partial_n u_A|_{\partial\Omega}\in X'_\diamond, \qquad     \Lambda_T:f\in X_{\diamond}\to \gamma_{T}\partial_n u_T|_{\partial\Omega} \in X'_\diamond
\end{equation}
are the classical DtN operators related to anomalies occupying regions $A$ and $T$, respectively.

In equations \eqref{eqn:mono2}, \eqref{P4} and \eqref{P3} $\overline{\Lambda}_{A}$, $\Lambda_A$ and $u_A$ refer to $\gamma_A$,
 whereas $\overline{\Lambda}_{T}$, $\Lambda_T$ and $u_T$ refer to $\gamma_T$ given by 
\begin{displaymath}
    \gamma_T(x,s)=\begin{cases}
        \gamma_{BG}(x) & \text{in $\Omega\setminus T$}\\
                \gamma_{NL}(x,s) & \text{in $T$}.
    \end{cases}
\end{displaymath}

The functional $f \mapsto \langle \overline{\Lambda}  \left( f\right) ,f\rangle$ is termed power product, in line with \cite{art:Co21}. From the mathematical standpoint, the power product gives the so-called Dirichlet Energy (see~\cite{art:Co21, MPpiecewise}):
\begin{equation}
    \label{ADtN=E}
        \left\langle \overline{\Lambda}_A(f), f\right\rangle=\mathbb{E}_A(u_A).
\end{equation}
On the other hand, from the physical standpoint, the power product corresponds to the  ohmic power dissipated in $\Omega$ for ERT, whereas it gives the electrostatic co-energy and the magnetostatic co-energy, for ECT and MIT, respectively.

\subsection{The outer support of a set}\label{sub_sec_support}
For the convenience of the reader, the concept of outer support~\cite{art:Ha13} of a set $A\subseteq \Omega$ is reminded. The following definition is equivalent to that of ~\cite{art:Ha13}, but is simpler and more intuitive.

\begin{definition}\label{def:outer}
The outer support of a set $A \subseteq \Omega$, denoted as $\outi_{\partial\Omega}{A}$, is the complement in $\overline{\Omega}$, of the union of those relatively open set $U$ contained in $\Omega \setminus\overline A$ and connected to $\partial \Omega$, i.e. those sets $U$ that are connected and satisfying $\partial U\cap\partial\Omega \neq \emptyset$. 
\end{definition}
In the following, for the sake of simplicity, the outer support is denoted with a $*$ superscript, i.e.
\begin{equation}\label{out_supp_c}
    A^* \equiv \outi_{\partial\Omega}{A}.
\end{equation}

\begin{remark}
It is worth noting that all the boundary points of $A^*$ are \emph{connected} to $\partial \Omega$. Moreover, when $A$ does not contain cavities that are not connected by arch to $\partial \Omega$, it results that
\begin{equation}
    A^*=A,
\end{equation}
refer also to \Cref{fig:outerM}.
\end{remark}

\begin{figure}[htp]
    \centering
    \subfloat[][]
    {\includegraphics[width=.3\textwidth]{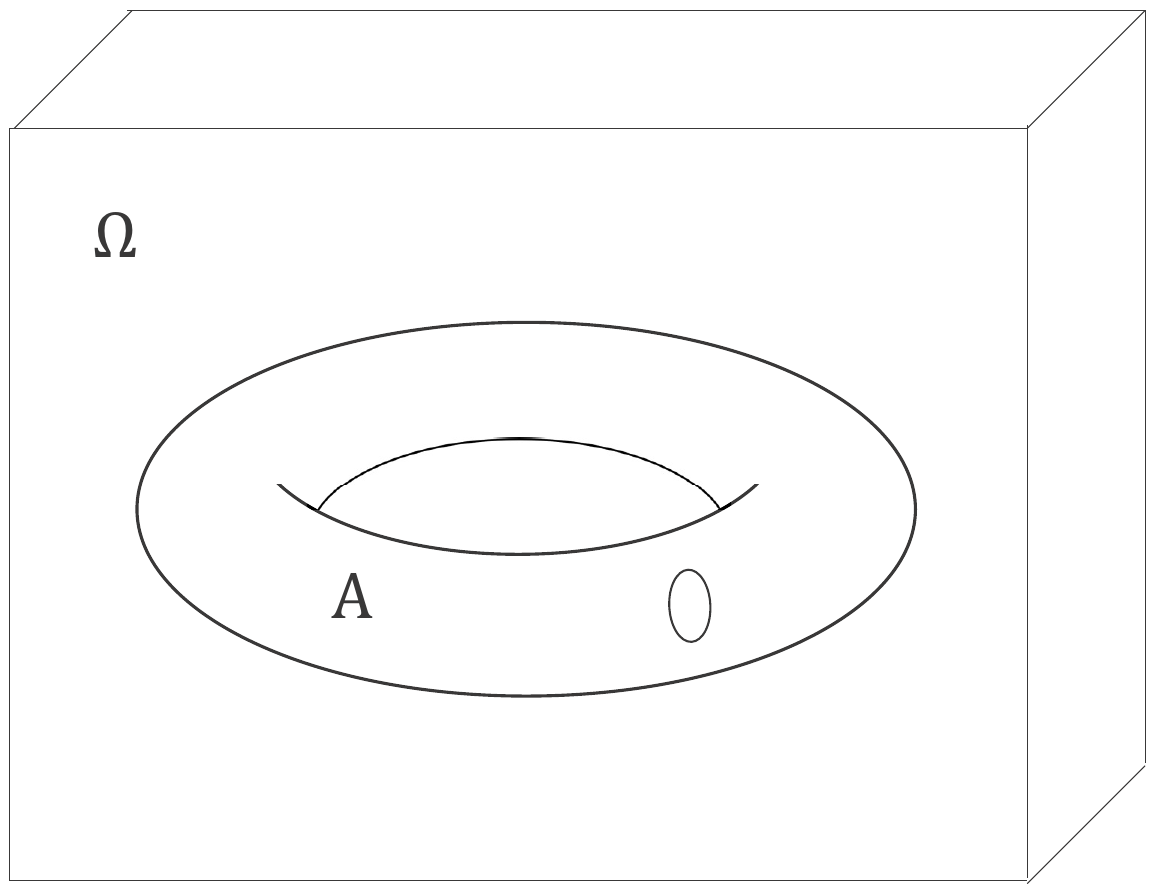}} \quad
    \subfloat[][]
    {\includegraphics[width=.3\textwidth]{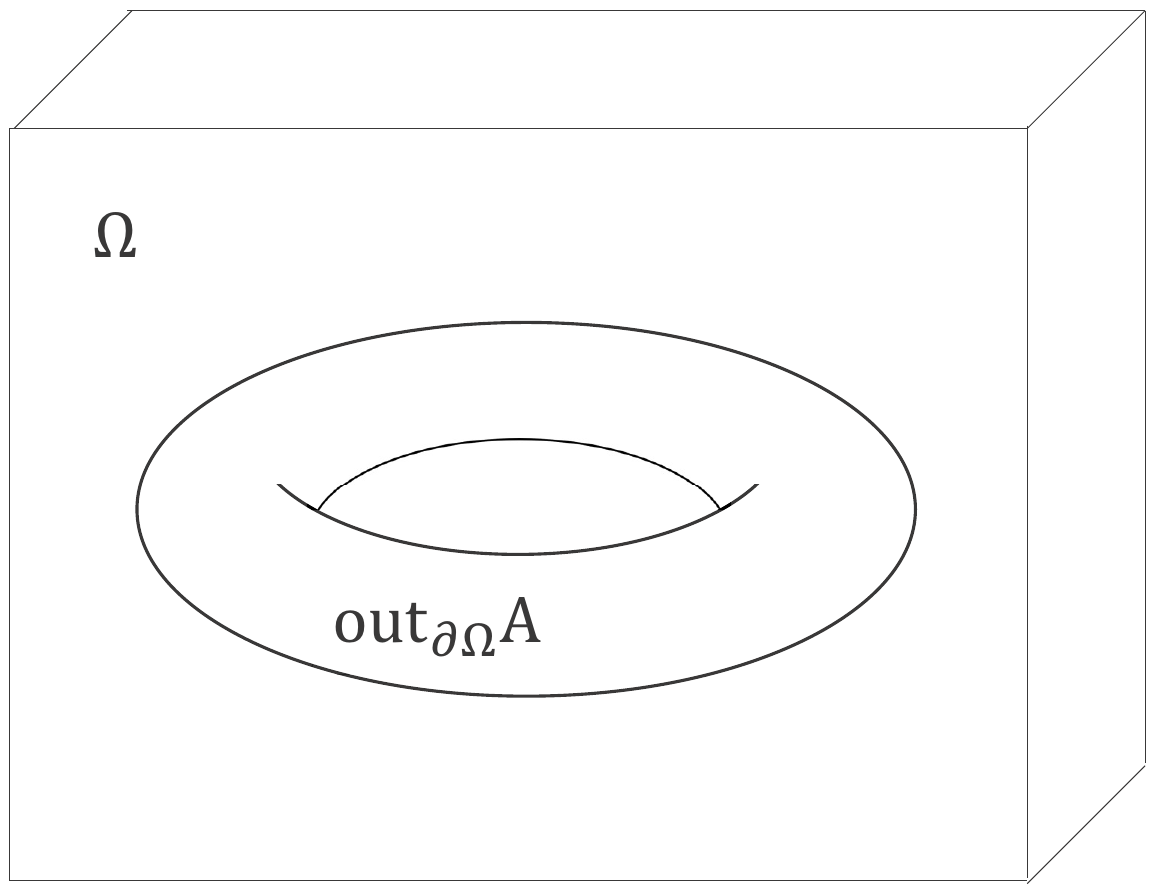}} \quad
    \subfloat[][]
    {\includegraphics[width=.3\textwidth]{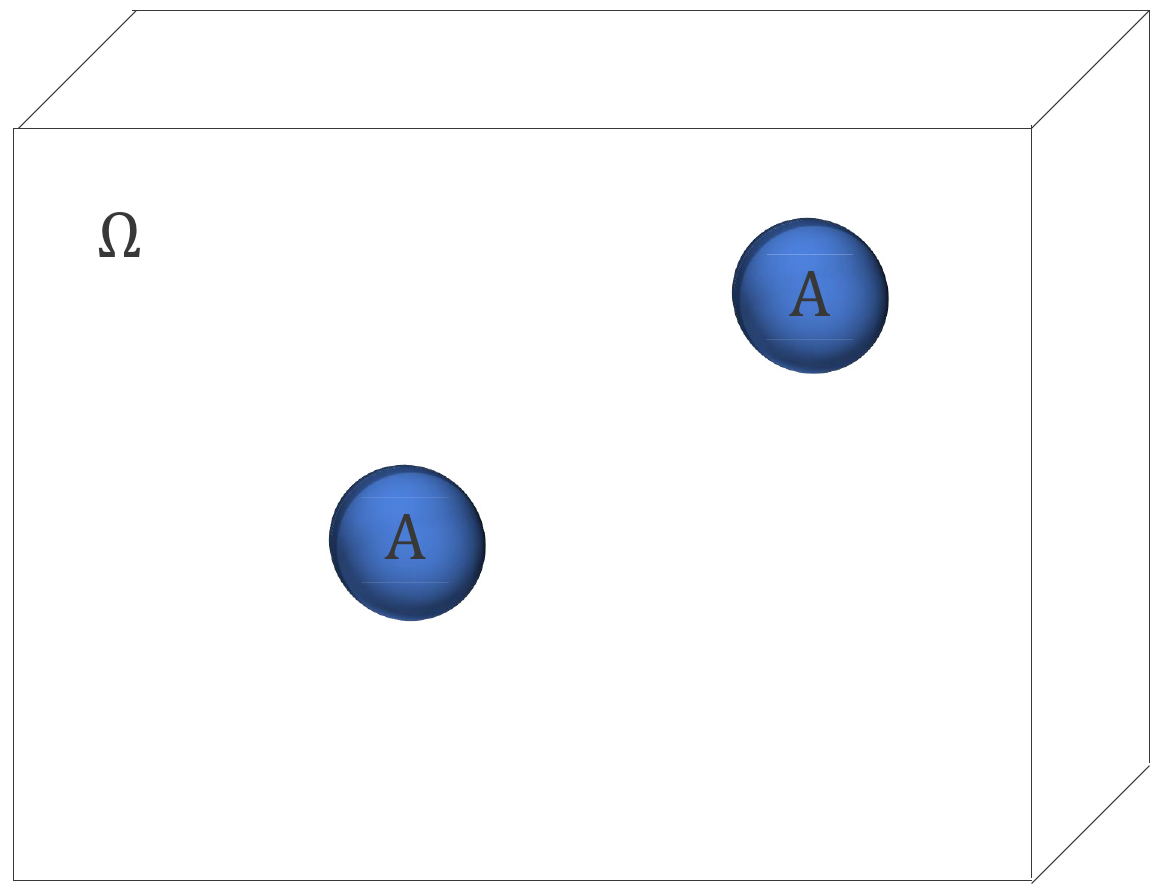}} 
    \caption{Left: Anomaly $A$ represented by a torus with a void inside. Center: outer support of $A$. Right: a set $A$, made by several connected components, that does not have any cavity and coincides with $A^*$.}
    \label{fig:outerM}
\end{figure}

\section{Converse of the Monotonicity Priciple}
\label{sec:main}
Before stating the main result, the concept of localized potentials is extended from Neumann data \cite{art:Ge08} to Dirichlet data. Localized potentials have been exploited to prove the Converse of the Monotonicity Principle in the linear case~\cite{art:Ha13}. 

\begin{proposition}
\label{lem:bridge}
Let $S_1,S_2\subset\Omega$ be two open sets such that $\overline{S_1}\cap\overline{S_2}=\emptyset$ and $\Omega\setminus(\overline{S_1}\cup \overline{S_2})$ is connected. Let the linear material property $\gamma\in L^{\infty}_+(\Omega)$ be piecewise analytic. Then there exists a sequence $\{f_n\}_{n\in\mathbb{N}}\subset X_{\diamond}$ of boundary potentials such that the family of solutions $\{u_n\}_{n\in\mathbb{N}}$ of the following problem
\begin{equation}
\label{eqn:lemfn}
\begin{cases}
    \nabla\cdot(\gamma(x)\nabla u_n(x))=0 & \text{in $\Omega$} \\
    u_n(x)=f_n(x) & \text{on $\partial\Omega$}
\end{cases}
\end{equation}
fulfill
\begin{equation}\label{eqn:lemlim}
   \lim_{n\to+\infty}\int_{S_1} \gamma(x)\abs{\nabla u_n(x)}^2\,dx=0\quad\text{and}\quad\lim_{n\to+\infty}\int_{S_2} \gamma(x)\abs{\nabla u_n(x)}^2\,dx=+\infty.
\end{equation}
\end{proposition}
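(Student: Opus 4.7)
The plan is to reduce this proposition to the already known Neumann-data version of the localized potentials theorem of Gebauer~\cite{art:Ge08} (valid for piecewise analytic $\gamma$), and then exchange the roles of Dirichlet and Neumann boundary data. The key remark is that the localization property~\eqref{eqn:lemlim} involves only $\nabla u_n$ inside $\Omega$; the same function $u$ solving $\nabla\cdot(\gamma\nabla u)=0$ can be parameterized either through its conormal derivative on $\partial\Omega$ or through its trace, so switching from one parameterization to the other is essentially cosmetic.

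First, I would invoke the Neumann-data localized potentials result: under the geometric hypotheses on $S_1,S_2$ and the regularity assumption on $\gamma$, there exists a sequence of zero-mean Neumann fluxes $\{g_n\}$ on $\partial\Omega$ such that the solutions $\tilde u_n\in H^1(\Omega)$ of
\[
\nabla\cdot(\gamma\nabla \tilde u_n)=0 \text{ in }\Omega,\qquad \gamma\,\partial_n \tilde u_n=g_n \text{ on }\partial\Omega,\qquad \int_{\partial\Omega}\tilde u_n\,dS=0,
\]
satisfy $\int_{S_1}\gamma|\nabla \tilde u_n|^2\,dx\to 0$ and $\int_{S_2}\gamma|\nabla \tilde u_n|^2\,dx\to +\infty$. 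At the core of Gebauer's proof is a Runge-type approximation argument based on unique continuation (for which piecewise analyticity of $\gamma$ is sufficient), combined with the uniform boundedness principle to extract a single sequence producing energy blow-up on $S_2$ simultaneously with energy decay on $S_1$.

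Second, I would set $f_n:=\tilde u_n|_{\partial\Omega}$. The zero-mean normalization gives $f_n\in X_\diamond$, and uniqueness of the weak Dirichlet problem~\eqref{eqn:lemfn} forces $u_n=\tilde u_n$ in $H^1(\Omega)$; therefore the two limits in~\eqref{eqn:lemlim} coincide with those already established for $\tilde u_n$, and the proposition follows. The main (and really the only nontrivial) obstacle is the Neumann-data version of the localized potentials theorem itself: once this is quoted, the passage to Dirichlet data is essentially cosmetic, with the sole caveat of adjusting an additive constant so that $f_n$ has zero boundary mean, which is harmless because all quantities in~\eqref{eqn:lemlim} are invariant under translation of $\tilde u_n$ by constants.
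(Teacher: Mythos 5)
Your proposal is correct and follows essentially the same route as the paper: both invoke the Neumann-data localized potentials theorem for piecewise analytic coefficients, take $f_n$ to be the trace of the resulting Neumann solution (which lies in $X_\diamond$ thanks to the zero-mean normalization), and conclude by uniqueness of the Dirichlet problem that $u_n$ coincides with the Neumann solution, so the two limits transfer verbatim.
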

\begin{remark}
    It is worth noting that it is not mandatory for set $S_2$ to coincide with its outer support, i.e. $S_2$ is allowed to have cavities.
\end{remark}

\begin{proof}[Proof of Proposition~\ref{lem:bridge}]
The proof is based on the uniqueness of the solution for the Dirichlet problem and the existence of localized potentials for Neumann data~\cite[Theorem~3.6]{art:Ha13}.

Let $S_1,S_2\subset\Omega$ be two open sets such that $\overline{S_1}\cap\overline{S_2}=\emptyset$ and $\Omega\setminus(\overline{S_1}\cup \overline{S_2})$ is connected. Following~\cite[Theorem~3.6]{art:Ha13}, there exists a sequence $\{g_n\}_{n\in\mathbb{N}}\subset X_{\diamond}$ such that the solutions $v_n$ of the following problem
\begin{equation}
\label{eqn:pp4}
\begin{cases}
\nabla\cdot(\gamma(x) \nabla v_n(x))=0 & \text{in $\Omega$}\\
\gamma\partial_\nu v_n(x)=g_n(x) & \text{on $\partial\Omega$}\\
\int_{\partial\Omega}v_n(x)\,dx=0
\end{cases}
\end{equation}
fulfill
\begin{equation} \label{eqn:lpN}   \lim_{n\to+\infty}\int_{S_1} \gamma(x)\abs{\nabla v_n(x)}^2\,dx=0\quad\text{and}\quad\lim_{n\to+\infty}\int_{S_2} \gamma(x)\abs{\nabla v_n(x)}^2\,dx=+\infty.
\end{equation}

The Dirichlet data $f_n \in X_{\diamond}$ gives $u_n=v_n$ when plugged in problem \eqref{eqn:lemfn} and, therefore, \eqref{eqn:lpN} implies \eqref{eqn:lemlim}.
\end{proof}

In literature, some versions of localized potentials are present, with slightly different assumptions on $S_1$ and $S_2$. In particular, in~\cite{art:Ge08}, it is required that $\overline{S_1}\cap\overline{S_2}=\emptyset$ and $\Omega\setminus(\overline{S_1}\cup \overline{S_2})$ is connected; in~\cite{art:Ha13} it is introduced the notion of outer support for a set and the hypotheses become $\interior{S_2}\not\subseteq S_1^*$. In~\cite{art:Ga20}, the localized potentials are formulated as in the following: let $U\subseteq\overline{\Omega}$ a relatively open set that intersect the boundary with a connected complement, let $S_2\subset U$ and let $\sigma$ a linear piecewise analytic electrical conductivity, which stands for the linear material property $\gamma$, then there exists a sequence $\{g_n\}_{n\in\mathbb{N}}\subset X_{\diamond}$ such that the corresponding solutions $\{u_n\}_{n\in\mathbb{N}}$ of the problem
\begin{displaymath}
\begin{cases}
\nabla\cdot\left(\gamma(x)\nabla u_n(x)\right)=0\,\text{in $\Omega$}\\
\gamma(x)\partial_n u_n(x)=g_n(x)\,\text{on $\partial\Omega$}     
   \end{cases}
\end{displaymath}
fulfill
\begin{equation}\label{variante_loc}
\lim_{n\to+\infty}\int_{\Omega\setminus U}\abs{\nabla u_n(x)}^2\,dx=0    \quad \lim_{n\to+\infty}\int_{S_2}\abs{\nabla u_n(x)}^2\,dx=+\infty.
\end{equation}
The same arguments of the proof of \Cref{lem:bridge} can be applied indifferently to all these different formulations. In the following, some of these different formulations of localized potentials are used and, in doing that, \Cref{lem:bridge} is applied to the particular formulation of interest.

For the sake of clarity, the remaining of the section is divided into four subsections in which it is shown the Converse of the Monotonicity Principle, for different type of material properties for the nonlinear phase. Specifically, in Sections \ref{ano_1} and \ref{ano_2}, bounded nonlinear material properties are considered, under assumptions (B1)-(C1). In \Cref{ano_infty}, anomalies with possibly unbounded and nonlinear material properties are treated, under assumptions (B2)-(C2). Finally the case of nonlinear anomalies with possibly vanishing material properties is investigated in \Cref{ano_zero}, under assumptions (B3)-(C3). 

\subsection{$\mathbf{\bm{\gamma}_{NL}>\bm{\gamma}_{BG}}$ and bounded}\label{ano_1}
In this Section, $\gamma_{NL}$ and $\gamma_{BG}$ are such that
\begin{equation}
\begin{cases}
\sup_{\Omega\setminus A}\{\gamma_{BG}(x)\} < \underline{\sigma} 
\\0<\underline{\gamma}\leq\gamma_{NL}(x, s)\leq\overline{\gamma}<+\infty &\text{ for a.e. }x\in A, \   \forall s> 0,\\
\end{cases}
\label{eqn:nass0}
\end{equation}
where $\underline{\gamma}$ and $\overline{\gamma}$ are two proper constants. The second relationship in \eqref{eqn:nass0} is the assumption (B1) and, furthermore, $\gamma_{NL}$ satisfy (A1), (A2) and (C1).
In this case, the material property of the anomaly is (i) greater than that of the background and (ii) is upper bounded.
An example of nonlinear material property compatible with conditions~\eqref{eqn:nass0} is shown in \Cref{fig:sigma1}.
\begin{figure}[htp]
\centering
\includegraphics[width=0.75\textwidth]{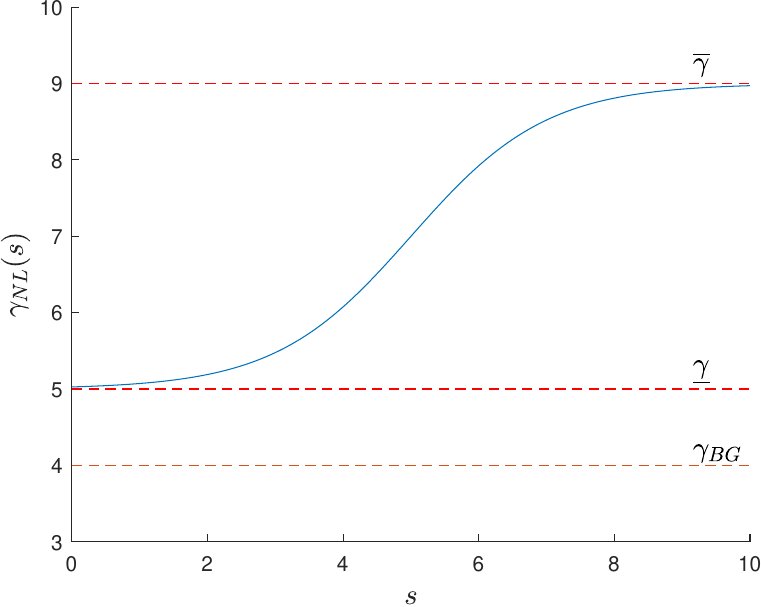}
\caption{$\gamma_{NL}$ compatible with conditions in~\eqref{eqn:nass0}}
\label{fig:sigma1}
\end{figure}

Given $T\subset\Omega$, the test material property $\gamma_T$ is defined as
\begin{equation}
\gamma_T(x)=
\begin{cases}
\gamma_{BG}(x) & \text{in $\Omega\setminus T$}\\
\underline{\gamma} & \text{in $T$}.
\end{cases}
\label{eqn:sigmat}
\end{equation}
The \Cref{fig:targetgeo} shows the unknown anomaly and the three possible cases of (i) the test anomaly $T$ is completely contained into the exterior of the outer support of the unknown anomaly $A$ $\left( T\cap A^*=\emptyset \right)$, (ii) the test anomaly $T$ is partially contained in the outer support of $A$ $\left( T\nsubseteq A^* \right)$ and $\left( T\cap A^*\neq\emptyset \right)$, and (iii) the test anomaly $T$ is completely contained in the outer support of $A$ $\left( T \subseteq  A^* \right)$.
\begin{figure}[htp]
\centering
\subfloat
{\includegraphics[width=.3\textwidth]{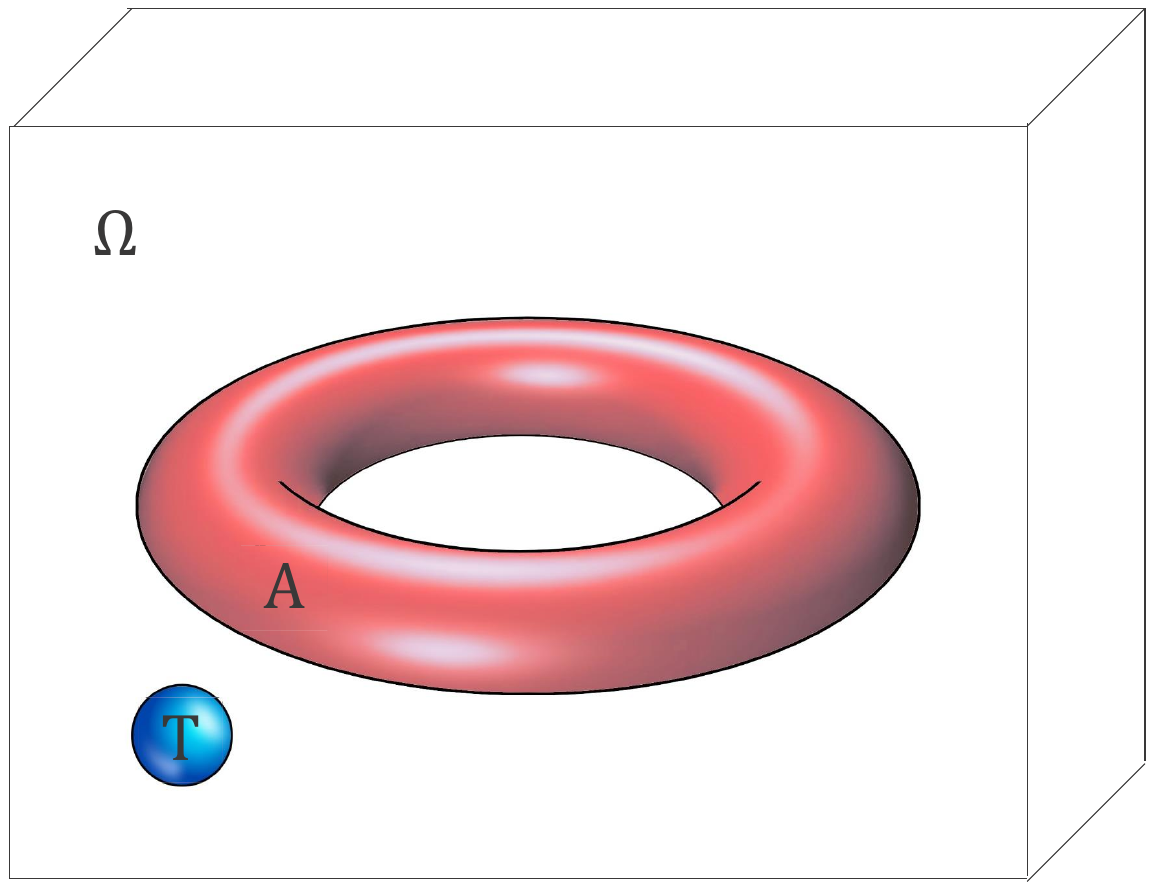}
\label{subfig:test1}}
\subfloat
{\includegraphics[width=.3\textwidth]{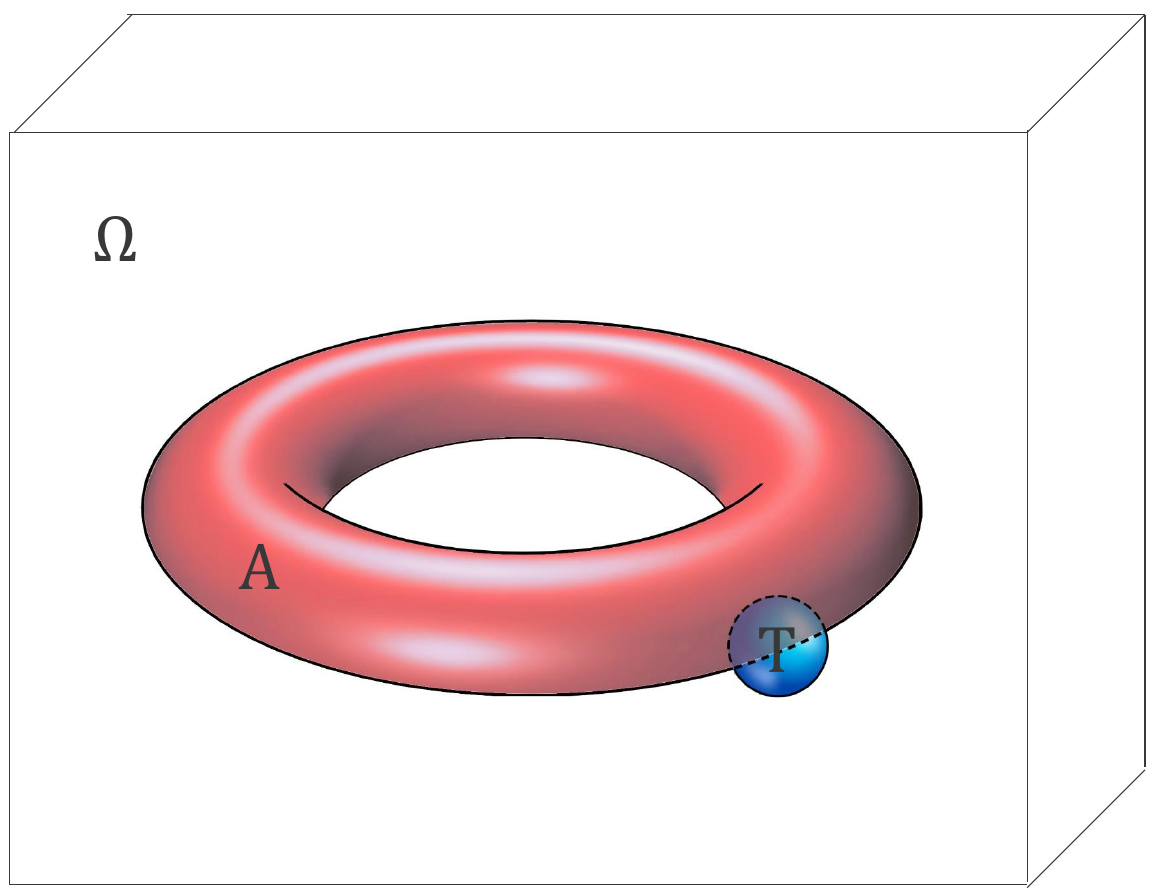}
\label{subfig:test2}}
\subfloat
{\includegraphics[width=.3\textwidth]{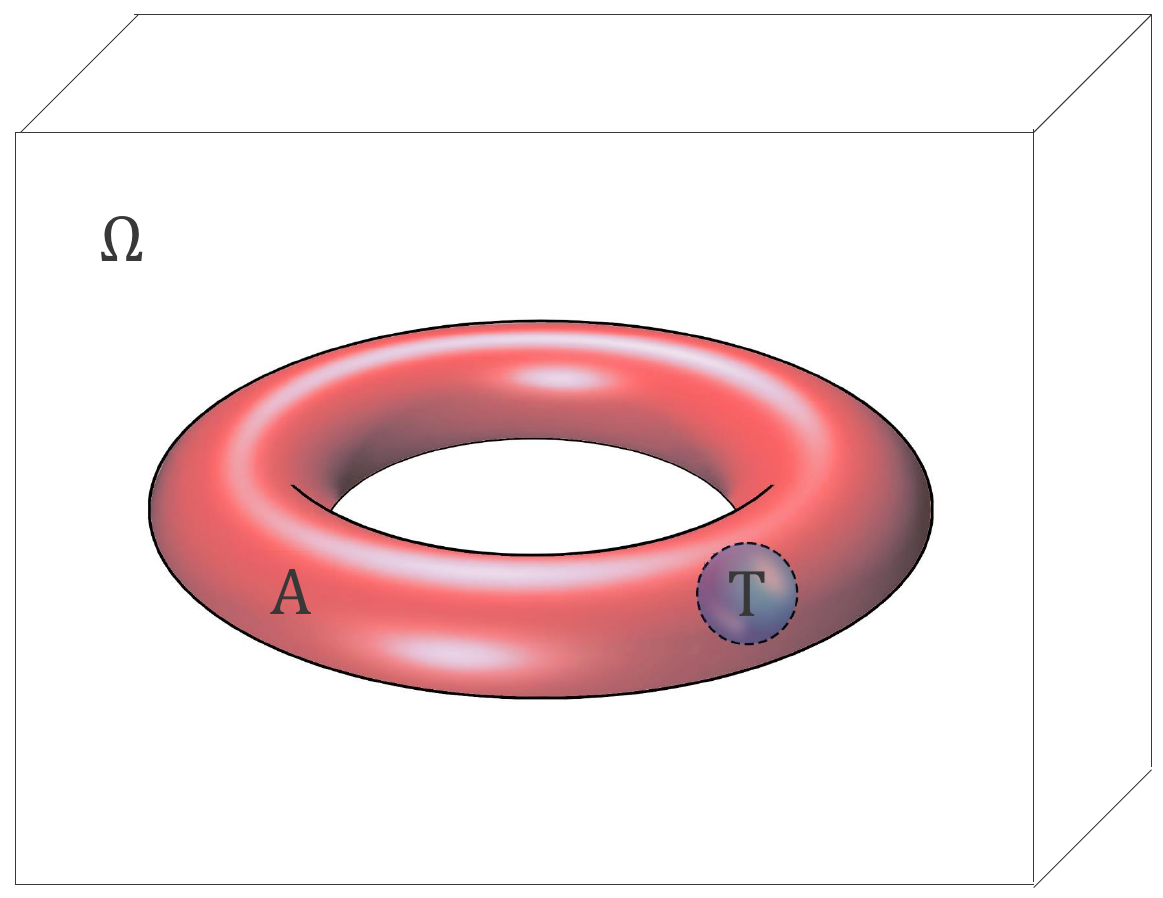}
\label{subfig:test3}} 
\caption{The three reference cases: $T \cap A^* = \emptyset$ 
(left), $T \nsubseteq A^*$ and $T\cap A^*\neq\emptyset$ (center), $T \subseteq A^*$ (right). For the sake of simplicity is assumed that $A=A^*$.}
\label{fig:targetgeo}
\end{figure}

The following Theorem refers to the cases shown in \Cref{fig:targetgeo} (left) and (center). 

\begin{theorem}\label{lem:lem1}
Let $\gamma_{NL}$ satisfy (A1), (A2), (B1) and (C1), and $\gamma_{BG}(x) \in L_+^\infty (\Omega)$ be piecewise analytic such that $\sup_{\Omega\setminus A}\{\gamma_{BG}(x)\} < \underline{\gamma}$. Let the material properties $\gamma_A(x,E)$ and $\gamma_T(x)$ be defined as in \eqref{eqn:sigmaa} and \eqref{eqn:sigmat}, respectively. Then,
\begin{equation}
T \nsubseteq A^* \Longrightarrow \overline{\Lambda}_T\nleqslant\overline{\Lambda}_A\qquad \forall A, T\in\mathcal S (\Omega).
\end{equation}
Moreover, if $A \in \mathcal S (\Omega)$ has a connected complement, then
\begin{equation}
\label{doppia_implicazione}
T\subseteq A \iff \overline{\Lambda}_T\leqslant\overline{\Lambda}_A \qquad\forall\ T\in\mathcal S (\Omega).
\end{equation}
\end{theorem}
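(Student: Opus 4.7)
The plan is to show, for every test inclusion $T\in\mathcal S(\Omega)$ with $T\nsubseteq A^*$, that one can find a Dirichlet datum $f_n\in X_\diamond$ for which $\left\langle(\overline{\Lambda}_T-\overline{\Lambda}_A)(f_n),f_n\right\rangle>0$; by \eqref{ADtN=E} this amounts to $\mathbb{E}_T(u_{T,n})-\mathbb{E}_A(u_{A,n})>0$. Note that under the choice \eqref{eqn:sigmat} the test coefficient $\gamma_T$ is linear and piecewise analytic, so $u_T$ solves a classical linear Dirichlet problem. Since $u_A$ minimises $\mathbb{E}_A$ under the boundary condition, one obtains $\mathbb{E}_T(u_T)-\mathbb{E}_A(u_A)\geq \mathbb{E}_T(u_T)-\mathbb{E}_A(u_T)$; splitting the right-hand side over $T\setminus A$, $T\cap A$ and $A\setminus T$ and using (i) the strict gap $c_0:=\underline{\gamma}-\sup_{\Omega\setminus A}\gamma_{BG}>0$ on $T\setminus A$, together with (ii) the $L^\infty$ upper bound $\gamma_{NL}\leq\overline{\gamma}$ on $A$, I would derive a constant $C>0$ independent of $f$ such that
\begin{equation*}
\mathbb{E}_T(u_T)-\mathbb{E}_A(u_A)\;\geq\;\tfrac{c_0}{2}\int_{T\setminus A}\abs{\nabla u_T}^2\,dx\;-\;C\int_{A}\abs{\nabla u_T}^2\,dx.
\end{equation*}
The structural feature that makes this useful is that the negative contribution is supported inside $A\subseteq A^*$, while the positive one is supported on $T\setminus A\supseteq T\setminus A^*$, which is nonempty by hypothesis.

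The second step is to drive this lower bound to $+\infty$ via localised potentials. Pick an open ball $B\subseteq T\setminus A^*$; by the very definition of outer support, $B$ lies in a connected open subset of $\Omega\setminus\overline{A}$ touching $\partial\Omega$. One can therefore select disjoint relatively open sets $S_1\supseteq\overline{A}$ and $S_2\subseteq B$ so that $\Omega\setminus(\overline{S_1}\cup\overline{S_2})$ remains connected. Applying \Cref{lem:bridge} to the linear piecewise analytic coefficient $\gamma_T$ produces a sequence $\{f_n\}\subseteq X_\diamond$ whose linear solutions $u_{T,n}$ satisfy
\begin{equation*}
\int_{A}\abs{\nabla u_{T,n}}^2\,dx\;\longrightarrow\;0\qquad\text{and}\qquad\int_{B}\abs{\nabla u_{T,n}}^2\,dx\;\longrightarrow\;+\infty.
\end{equation*}
Inserting these limits into the estimate of the previous paragraph gives $\mathbb{E}_T(u_{T,n})-\mathbb{E}_A(u_{A,n})\to+\infty$, so for $n$ large one concludes $\overline{\Lambda}_T\nleqslant\overline{\Lambda}_A$, which proves the first implication.

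For the biconditional \eqref{doppia_implicazione}, note that if $A\Subset\Omega$ has connected complement then $A$ coincides with its outer support $A^*$, so the contrapositive of the implication just proved yields $\overline{\Lambda}_T\leqslant\overline{\Lambda}_A\Longrightarrow T\subseteq A$, while the opposite direction is the direct monotonicity \eqref{eqn:mono2} applied to the pointwise inequality $\gamma_T\leq\gamma_A$ (valid whenever $T\subseteq A$ thanks to $\gamma_{BG}<\underline{\gamma}\leq\gamma_{NL}$). The main obstacle I anticipate lies in the topological arrangement of $S_1$ and $S_2$: keeping $\Omega\setminus(\overline{S_1}\cup\overline{S_2})$ connected when $A$ has internal cavities requires exploiting that $B$ sits in a component of $\Omega\setminus\overline{A}$ reaching $\partial\Omega$, which is precisely the property that fails at points of $A^*\setminus A$ and explains why the converse cannot improve on the outer support.
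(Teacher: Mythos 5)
Your proposal is correct and follows essentially the same route as the paper: the energy comparison via the minimality of $u_A$ and \eqref{ADtN=E}, a decomposition of $\Omega$ into the regions where $\gamma_T-\gamma_A$ has a definite sign, and localized potentials for the linear, piecewise analytic coefficient $\gamma_T$ that blow up the favourable term on a ball in $T\setminus A^*$ while depleting the unfavourable one on $A$ (the paper decomposes with respect to $A^*$ rather than $A$, but the two bookkeepings are equivalent). The one point to tighten is your choice of $S_1$: when $A$ has cavities no neighbourhood $S_1\supseteq\overline{A}$ can leave $\Omega\setminus(\overline{S_1}\cup\overline{S_2})$ connected, so one must instead take $S_1=\Omega\setminus U$ for a relatively open connected $U$ joining $B$ to $\partial\Omega$ inside the complement of $A^*$ (the variant \eqref{variante_loc} of \Cref{lem:bridge}), which still gives $\int_A\abs{\nabla u_{T,n}}^2\to 0$ because $A\subseteq A^*\subseteq S_1$ --- exactly the device the paper uses and the resolution you anticipate in your closing paragraph.
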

\begin{proof}
Let $u_A$ and $u_T$ be the weak solution of~\eqref{P1}, with material property equal to $\gamma_A$ and $\gamma_T$, respectively.

It turns out that
\begin{equation}
    \left\langle \overline{\Lambda}_A(f),f\right\rangle = \int_0^{\abs{\nabla u_A(x)}}\gamma_A(x,\eta)\eta\,d\eta\,dx \le \int_0^{\abs{\nabla u_T(x)}}\gamma_A(x,\eta)\eta\,d\eta\,dx,
\end{equation}
where the first equality comes from \eqref{ADtN=E} combined with \eqref{eqn:Ea}, and the inequality comes from the minimality of $u_A$ (see \eqref{eqn:varprob}).
Hence,
\begin{equation}
\label{eqn:int1}
\langle\overline{\Lambda}_A(f)-\overline{\Lambda}_T(f),f\rangle \leq \int_{\Omega}\int_0^{\abs{\nabla u_T(x)}}\left(\gamma_A(x,\eta)-\gamma_T(x)\right)\eta\,d\eta\,dx,
\end{equation}
where it has been exploited that $\left\langle \overline{\Lambda}_T(f), f\right\rangle=\int_{\Omega}\int_0^{\abs{\nabla u_T(x)}}\gamma_T(x)\eta\,d\eta\,dx$ (see \eqref{ADtN=E} and \eqref{eqn:Ea}, written for $\gamma_T$, rather than $\gamma_A$).

In the following it is assumed that $T\cap A^* \neq \emptyset$. The case when $T\cap A^*$ is empty, can be treated similarly by taking into account that the integrals over $T\cap A^*$ disappear.

By substituting the expressions of $\gamma_A$ and $\gamma_T$ from~\eqref{eqn:sigmaa} and~\eqref{eqn:sigmat} in~\eqref{eqn:int1}, it results
\begin{equation*}
    \begin{split}
\langle\overline{\Lambda}_A(f)-\overline{\Lambda}_T(f),f\rangle
& \leq\int_{A^*\setminus T}\int_0^{\abs{\nabla u_T(x)}}\left(\gamma_{NL}(x,\eta)-\gamma_{BG}(x)\right)\eta\,d\eta\,dx \\
&\quad + \int_{A^*\cap T}\int_0^{\abs{\nabla u_T(x)}}\left(\gamma_{NL}(x,\eta)-\underline{\gamma}\right)\eta\,d\eta\,dx \\
&\quad -\int_{T\setminus A^*}\int_0^{\abs{\nabla u_T(x)}}\left(\underline{\gamma}-\gamma_{BG}(x)\right)\eta\,d\eta\,dx \\
&\leq \int_{A^*\setminus T}\int_0^{\abs{\nabla u_T(x)}}\left(\overline             {\gamma}-\gamma_{BG}(x)\right)\eta\,d\eta\,dx\\
&\quad +\int_{A^*\cap T}\int_0^{\abs{\nabla u_T(x)}}\left(\overline{\gamma}-\underline{\gamma}\right)\eta\,d\eta\,dx \\
& \quad -\int_{T\setminus A^*}\int_0^{\abs{\nabla u_T(x)}}\left(\underline{\gamma}-\gamma_{BG}(x)\right)\eta\,d\eta\,dx \\
    \end{split}
\end{equation*}
and, therefore,
\begin{equation}
\label{chainLambda}
    \begin{split}
 \langle\overline{\Lambda}_A(f)-\overline{\Lambda}_T(f),f\rangle
 & \leq\frac{1}{2} \int_{A^*\setminus T}\left(\overline{\gamma}-\gamma_{BG}(x)\right)\abs{\nabla u_T(x)}^2\,dx\\
&
\quad + \frac{1}{2} \int_{A^*\cap T}\left(\overline{\gamma}-\underline{\gamma}\right)\abs{\nabla u_T(x)}^2\,dx\\
&\quad -\frac{1}{2}\int_{T\setminus A^*}\left(\underline{\gamma}-\gamma_{BG}(x)\right)\abs{\nabla u_T(x)}^2\,dx.
    \end{split}
\end{equation}

Let $B_{\varepsilon}$ be a ball of radius $\varepsilon>0$ contained into the interior of $T\setminus A^*$, and let $U \subseteq \overline{\Omega}$ be a relatively open, connected to $\partial\Omega$ such that $B_{\varepsilon}\subset U$. 
\begin{figure}[htp]
\centering
\includegraphics[width=.4\textwidth]{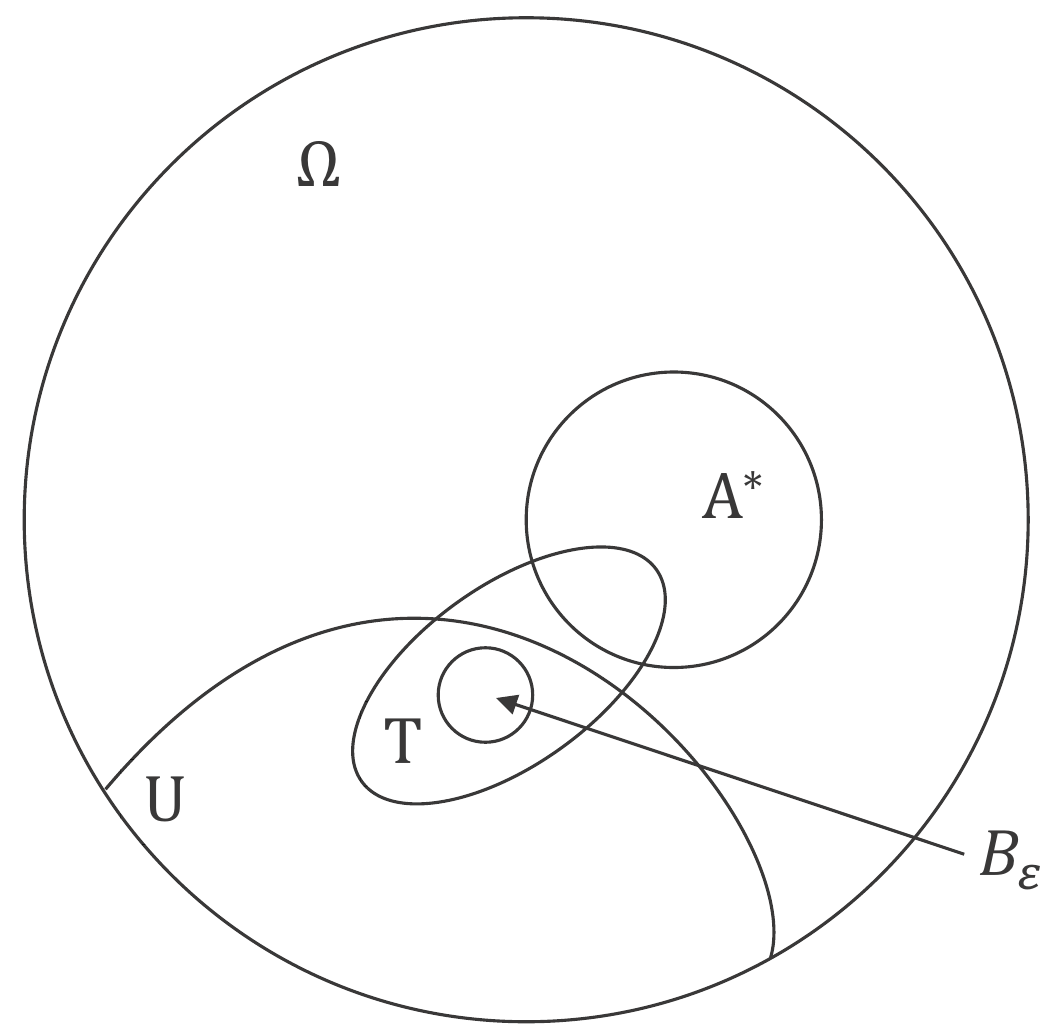}
\caption{Geometric relationships between sets $\Omega$, $U$, $T$, $A^*$ and $B_{\varepsilon}$.}
\label{fig:ta1}
\end{figure}
From Proposition~\ref{lem:bridge}, there exists a sequence of boundary potentials $\{f_n\}_{n\in\mathbb{N}}\subset X_{\diamond}$ such that the sequence of solutions  $\{u_n\}_{n\in\mathbb{N}}$ of problem \eqref{eqn:lemfn}, with $\gamma=\gamma_T$,
have the asymptotic behaviour \eqref{eqn:lemlim} applied to $S_1=\Omega\setminus U$ and $S_2=B_\varepsilon$:
\begin{equation*}
\lim_{n\to+\infty}\int_{\Omega\setminus U}\abs{\nabla u_n(x)}^2\,dx=0 \quad \text{and} \quad \lim_{n\to+\infty}\int_{B_{\varepsilon}}\abs{\nabla u_n(x)}^2\,dx=+\infty.
\end{equation*}
Consequently, it turns out that 
\begin{align}
&\int_{A^*\setminus T}\abs{\nabla u_n(x)}^2\,dx  \to 0, \label{lim_1} \\
& \int_{A^*\cap T}\abs{\nabla u_n(x)}^2\,dx  \to 0, \label{lim_2} \\
& \int_{T\setminus A^*} \abs{\nabla u_n(x)}^2\,dx\geq\int_{B_{\varepsilon}}\abs{\nabla u_n(x)}^2\,dx  \to+\infty.\label{lim_3}
\end{align}
Therefore, by combining \eqref{chainLambda} for $f=f_n$ and $u_T=u_n$, together with \eqref{lim_1}-\eqref{lim_3}, it results that
\begin{equation*}
 \begin{split}
\langle\overline{\Lambda}_A(f_n)-\overline{\Lambda}_T(f_n),f_n\rangle \to-\infty.
 \end{split} 
\end{equation*}

This proves that
\begin{equation}
    \label{mono_suff}
  T\nsubseteq A^* \Longrightarrow\overline{\Lambda}_T\nleqslant\overline{\Lambda}_A.
\end{equation}

When the outer support of $A$ coincides with $A$, i.e. $A^*=A$, equation \eqref{mono_suff} combined with \eqref{eqn:mono2} gives the equivalence stated in \eqref{doppia_implicazione}.
\end{proof}

\subsection{$\mathbf{\bm{\gamma}_{NL}<\bm{\gamma}_{BG}}$ and bounded}\label{ano_2}
In this section $\gamma_{NL}$ and $\gamma_{BG}$ are such that
\begin{equation}
\begin{cases}
\overline{\gamma} < \inf_{\Omega\setminus A}\{\gamma_{BG}(x)\}   &
\\
0<\underline{\gamma}\leq\gamma_{NL}(x,s)\leq\overline{\gamma}<+\infty &\text{ for a.e. }x\in A, \   \forall s> 0,\\
\end{cases}
\label{eqn:nass1}
\end{equation}
where $\underline{\gamma}$ and $\overline{\gamma}$ are two proper constants. The second relationship in \eqref{eqn:nass1} is the assumption (B1) and, furthermore, $\gamma_{NL}$ satisfy (A1), (A2) and (C1).
In this case, the material property of the anomaly is (i) smaller than that of the background and (ii) is lower bounded.

An example of nonlinear material property compatible with conditions~\eqref{eqn:nass1} is shown in \Cref{fig:sigma2}.
\begin{figure}[htp]
\centering
\includegraphics[width=.75\textwidth]{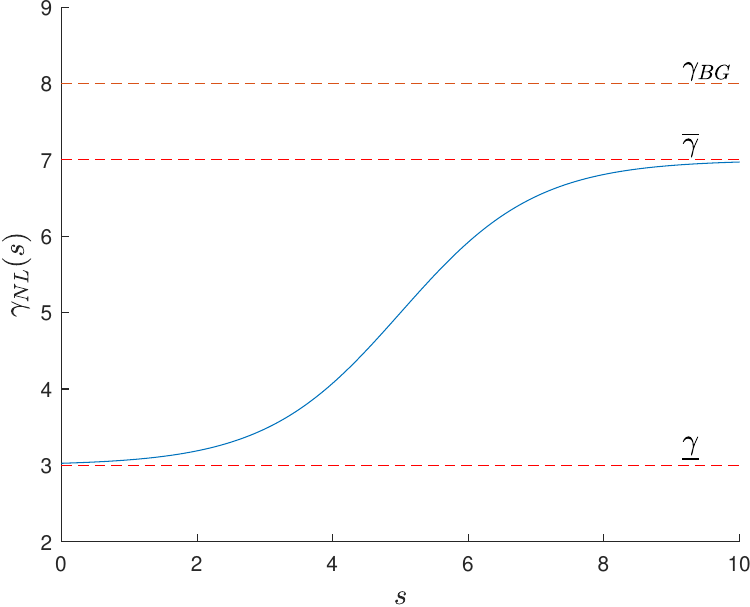}
\caption{$\gamma_{NL}$ compatible with conditions in~\eqref{eqn:nass1}}
\label{fig:sigma2}
\end{figure}

\noindent Given an arbitrary test domain $T\subset\Omega$, the corresponding (test) material property $\gamma_T$ is defined as
\begin{equation}
\gamma_T(x)=
\begin{cases}
\gamma_{BG}(x) & \text{in $\Omega\setminus T$}\\
\overline{\gamma} & \text{in $T$}.
\end{cases}
\label{eqn:sigmat1}
\end{equation}

\begin{theorem}\label{lem:lem2}
Let $\gamma_{NL}$ satisfy (A1), (A2), (B1) and (C1), and $\gamma_{BG}(x) \in L_+^\infty (\Omega)$ be piecewise analytic such that $\overline{\gamma} < \inf_{\Omega\setminus A}\{\gamma_{BG}(x)\}$. Let the material properties $\gamma_A(x,E)$ and $\gamma_T(x)$ be defined as in \eqref{eqn:sigmaa} and \eqref{eqn:sigmat1}, respectively. 
Then,
\begin{equation}
T \nsubseteq A^* \Longrightarrow \overline{\Lambda}_T\nleqslant\overline{\Lambda}_A\qquad \forall A, T\in\mathcal S (\Omega).
\end{equation}
Moreover, if $A\in\mathcal S (\Omega)$ has a connected complement, then for every $T\subset\Omega$,
\begin{equation}
\label{doppia_implicazione1}
T\subseteq A \iff \overline{\Lambda}_T\leqslant\overline{\Lambda}_A \qquad\forall\ T\in\mathcal S (\Omega).
\end{equation}
\end{theorem}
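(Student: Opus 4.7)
My plan is to mimic the proof of Theorem~\ref{lem:lem1}, exchanging the roles of $T\setminus A^{*}$ and $A^{*}\setminus T$ to reflect the reversed inequality $\overline{\gamma}<\gamma_{BG}$. Let $u_A$ and $u_T$ denote the weak solutions of \eqref{P1} corresponding to $\gamma_A$ and $\gamma_T$. Combining \eqref{ADtN=E} with the variational minimality \eqref{eqn:varprob} of $u_A$ for $\mathbb{E}_A$, I would obtain the analogue of \eqref{eqn:int1}:
\begin{equation*}
\langle \overline{\Lambda}_A(f)-\overline{\Lambda}_T(f),f\rangle \;\leq\; \int_\Omega\int_0^{|\nabla u_T(x)|}\bigl(\gamma_A(x,\eta)-\gamma_T(x)\bigr)\eta\,d\eta\,dx.
\end{equation*}
Splitting the integration over $A^{*}\setminus T$, $A^{*}\cap T$ and $T\setminus A^{*}$ and using $\gamma_{NL}(x,\eta)\leq\overline{\gamma}$ jointly with $\overline{\gamma}<\gamma_{BG}$, the integrand is bounded above by $\overline{\gamma}-\gamma_{BG}<0$ on $A^{*}\setminus T$, by $0$ on $A^{*}\cap T$, and equals $\gamma_{BG}-\overline{\gamma}>0$ on $T\setminus A^{*}$. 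Dropping the non-positive middle contribution yields the counterpart of \eqref{chainLambda}:
\begin{equation*}
\langle \overline{\Lambda}_A(f)-\overline{\Lambda}_T(f),f\rangle \;\leq\; \frac{1}{2}\int_{A^{*}\setminus T}(\overline{\gamma}-\gamma_{BG})|\nabla u_T|^2\,dx \;+\; \frac{1}{2}\int_{T\setminus A^{*}}(\gamma_{BG}-\overline{\gamma})|\nabla u_T|^2\,dx,
\end{equation*}
whose first summand is strictly negative and whose second is strictly positive.

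Since $T\nsubseteq A^{*}$, I would then invoke the localized-potentials variant \eqref{variante_loc} of Proposition~\ref{lem:bridge} for the linear piecewise analytic coefficient $\gamma_T$: choose a ball $B_\varepsilon$ in the interior of $A^{*}\setminus T$ and a relatively open $U\subseteq\overline{\Omega}$ meeting $\partial\Omega$ with connected complement such that $B_\varepsilon\subset U$ and $T\setminus A^{*}\subseteq\Omega\setminus U$. The resulting sequence $\{f_n\}\subset X_{\diamond}$ produces solutions $u_n$ satisfying $\int_{B_\varepsilon}|\nabla u_n|^2\,dx\to+\infty$ and $\int_{T\setminus A^{*}}|\nabla u_n|^2\,dx\to 0$. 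The first summand of the chain above then diverges to $-\infty$ while the second vanishes, so $\langle \overline{\Lambda}_A(f_n)-\overline{\Lambda}_T(f_n),f_n\rangle\to-\infty$, which is exactly $\overline{\Lambda}_T\nleqslant\overline{\Lambda}_A$. The equivalence \eqref{doppia_implicazione1} then follows by combining this converse with the direct Monotonicity Principle of \Cref{sec:mpm}, using $A^{*}=A$ when $A$ has connected complement.

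The technical crux is the construction of $U$. Whereas in Theorem~\ref{lem:lem1} the target ball sits in $T\setminus A^{*}\subseteq\Omega\setminus A^{*}$ and is automatically accessible from $\partial\Omega$ by the very definition of the outer support, here $A^{*}\setminus T$ is interior and may be shielded from $\partial\Omega$ by $T\setminus A^{*}$ (imagine an annular $T$ surrounding a ball $A$). Connecting $B_\varepsilon\subset A^{*}\setminus T$ to $\partial\Omega$ through a tubular corridor that lies outside $T\setminus A^{*}$ and whose complement remains connected is the only nontrivial step; it is feasible in dimension $n\geq 2$ thanks to the Lipschitz regularity of $\partial T$. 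A minor extra argument is required in the degenerate case $A\subseteq T$, where $A^{*}\setminus T$ may reduce to cavities on which the integrand vanishes: there $B_\varepsilon$ must be placed inside $A$ itself and the strict monotonicity \textbf{(A2)} supplies the necessary strict negativity.
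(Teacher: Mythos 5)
Your starting inequality (the analogue of \eqref{eqn:int1}) is correct, but the decomposition it produces cannot be driven to $-\infty$ in the way you describe, and the divergence you would obtain has the wrong orientation for this regime. First, the term you need to blow up lives on $A^*\setminus T$, and the hypothesis $T\nsubseteq A^*$ gives no control over that set: it may have empty interior (take $A^*\subseteq T$ with $T$ strictly larger), in which case your upper bound reduces to the single non-negative term on $T\setminus A^*$ and says nothing. Even when $A^*\setminus T$ contains a ball, the localization you invoke is not available in general: in your own annulus example every path from $B_\varepsilon\subset A^*\setminus T$ to $\partial\Omega$ crosses $T\setminus A^*$, so no ``tubular corridor outside $T\setminus A^*$'' exists; the localized-potentials results require precisely $B_\varepsilon\nsubseteq (T\setminus A^*)^{*}$, and a Caccioppoli-type estimate forbids the gradient energy from diverging inside a region while vanishing on a shell that surrounds it. Your fallback for $A\subseteq T$ also fails, since (B1) permits $\gamma_{NL}\equiv\overline{\gamma}$, so on $A^*\cap T$ the integrand $\gamma_{NL}(x,\eta)-\overline{\gamma}$ can vanish identically and (A2) supplies no strict negativity. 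Second, and independently, in the regime $\overline{\gamma}<\inf_{\Omega\setminus A}\gamma_{BG}$ the Monotonicity Principle reads $T\subseteq A\Rightarrow \mathcal{P}_T(f)\ge\mathcal{P}_A(f)$ for all $f$, so the converse requires exhibiting a boundary datum with $\mathcal{P}_T(f)<\mathcal{P}_A(f)$; your construction yields $\mathcal{P}_T(f_n)-\mathcal{P}_A(f_n)\to+\infty$, which is compatible with the monotonicity test being passed and negates nothing.

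The paper resolves both issues by reversing the roles of the two solutions rather than the roles of the two regions. It introduces the linear comparison coefficient $\gamma_A^I$ of \eqref{eqn:sigmaai} (equal to $\underline{\gamma}$ on $A$), uses the MP to obtain $\langle\overline{\Lambda}_T(f)-\overline{\Lambda}_A(f),f\rangle\le\langle\overline{\Lambda}_T(f)-\overline{\Lambda}_A^I(f),f\rangle$, and then applies the minimality of $u_T$ evaluated at the field $u_A^I$ of the purely linear problem. In the resulting decomposition \eqref{eqn:chainLambda_2} the negative term sits on $T\setminus A^*$, so the ball $B_\varepsilon$ is placed in $T\setminus A^*$ exactly as in \Cref{lem:lem1}; that set is open, nonempty under $T\nsubseteq A^*$, and reachable from $\partial\Omega$ by the very definition of the outer support, so \Cref{lem:bridge} applies with $\gamma=\gamma_A^I$ and drives $\langle\overline{\Lambda}_T(f_n)-\overline{\Lambda}_A(f_n),f_n\rangle\to-\infty$, which is the correctly oriented violation of the monotonicity test.
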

\begin{proof}

Let $\gamma_A^I$ be the material property defined as
\begin{equation}
\label{eqn:sigmaai}
\gamma_A^I(x)=
\begin{cases}
\gamma_{BG}(x) & \text{in $\Omega\setminus A$}\\
\underline{\gamma} & \text{in A}.
\end{cases}
\end{equation}
Since $\gamma_A^I \le \gamma_A$, the Monotonicity Principle \cite{MPpiecewise,art:Co21} implies that $\overline{\Lambda}_A^I\leqslant\overline{\Lambda}_A$, being $\overline{\Lambda}_A^I$ the DtN operator related to $\gamma_A^I$. By combining this latter inequality with \eqref{eqn:int1}, it follows that 
\begin{equation}\label{eqn:int3}
\langle \overline{\Lambda}_T(f)-\overline{\Lambda}_A(f),f\rangle\leq\langle \overline{\Lambda}_T(f)-\overline{\Lambda}_A^I(f),f\rangle\leq 
\int_{\Omega}\int_0^{\abs{\nabla u_A^I(x)}}(\gamma_T(x)-\gamma_A^I(x))\,d\eta\,dx,
\end{equation}
where $u_A^I$ is the solution of 
\begin{equation*}
\begin{cases}
\nabla\cdot\left(\gamma_A^I(x)\nabla u^{I}_A(x)\right)=0 & \text{ in } \Omega\\
u^{I}_A(x)=f(x) & \text{ on }\partial \Omega. \\
\end{cases}
\end{equation*}

In the following it is assumed that $T\cap A^* \neq \emptyset$. The case when $T\cap A^*$ is empty, can be treated similarly by taking into account that the integrals over $T\cap A^*$ disappear. 

For $T\cap A^*\neq\emptyset$, by replacing $\gamma_T$ and $\gamma_A^I$ with their expressions (see \eqref{eqn:sigmat1} and \eqref{eqn:sigmaai}), it results that
\begin{equation}\label{eqn:chainLambda_2}
    \begin{split}
\langle\overline{\Lambda}_T(f)-\overline{\Lambda}_A(f),f\rangle
& \leq\int_{A^*\setminus T}\left(\gamma_{BG}(x)-\underline{\gamma}\right)\abs{\nabla u_A^I(x)}^2\,dx \\
&\quad + \int_{A^*\cap T}\left(\overline{\gamma}-\underline{\gamma}\right)\abs{\nabla u_A^I(x)}^2\,dx \\
&\quad -\int_{T\setminus A^*}\left(\overline{\gamma}-\gamma_{BG}(x)\right)\abs{\nabla u_A^I(x)}^2\,dx. 
\end{split}
\end{equation}

Let $B_{\varepsilon}$ be a ball of radius $\varepsilon > 0$ contained into the interior of $T\setminus A^*$, and let $U \subseteq \overline{\Omega}$ be a relatively open set, connected to $\partial\Omega$ such that $B_{\varepsilon}\subset U$ (see \Cref{fig:ta1}). 

From Proposition~\ref{lem:bridge}, there exists a sequence of boundary potentials $\{f_n\}_{n\in\mathbb{N}}\subset X_{\diamond}$ such that the sequence of solutions  $\{u_n\}_{n\in\mathbb{N}}$ of problem \eqref{eqn:lemfn}, with $\gamma=\gamma_A^I$,
have the asymptotic behaviour \eqref{eqn:lemlim} applied to $S_1=\Omega\setminus U$ and $S_2=B_\varepsilon$:
\begin{equation*}
\lim_{n\to+\infty}\int_{\Omega\setminus U}\abs{\nabla u_n(x)}^2\,dx=0 \quad \text{and} \quad \lim_{n\to+\infty}\int_{B_{\varepsilon}}\abs{\nabla u_n(x)}^2\,dx=+\infty.
\end{equation*}

Consequently, it turns out that 
\begin{align}
&\int_{A^*\setminus T}\abs{\nabla u_n(x)}^2\,dx  \to 0, \label{lim_1_I} \\
& \int_{A^*\cap T}\abs{\nabla u_n(x)}^2\,dx  \to 0, \label{lim_2_I} \\
& \int_{T\setminus A^*} \abs{\nabla u_n(x)}^2\,dx\geq\int_{B_{\varepsilon}}\abs{\nabla u_n(x)}^2\,dx  \to+\infty.\label{lim_3_I}
\end{align}
Therefore, by combining \eqref{eqn:chainLambda_2} for $f=f_n$ and $u_A^I=u_n$, together with \eqref{lim_1_I}-\eqref{lim_2_I}-\eqref{lim_3_I}, it results that
\begin{equation*}
 \begin{split}
\langle\overline{\Lambda}_T(f_n)-\overline{\Lambda}_A(f_n),f_n\rangle\to-\infty.
 \end{split} 
\end{equation*}

This proves that
\begin{equation}
    \label{mono_suff_I}
  T\nsubseteq A^* \Longrightarrow\overline{\Lambda}_T\nleqslant\overline{\Lambda}_A.
\end{equation}

When the outer support of $A$ coincides with $A$, i.e. $A^*=A$, equation \eqref{mono_suff_I} combined with \eqref{eqn:mono2} gives the equivalence stated in \eqref{doppia_implicazione1}.
\end{proof}

\subsection{$\mathbf{\bm{\gamma}_{NL}>\bm{\gamma}_{BG}}$ and unbounded}\label{ano_infty}
In this section, $\gamma_{NL}$ satisfies (A1), (A2), (B2), (C2). For the convenience of the reader, it is recalled that accordingly to (B2), for fixed $1<q<+\infty$, there exist three positive constants $\underline{\gamma}\le\overline{\gamma}$ and $s_0$ such that
\begin{equation} \label{eqn:nass2_0}
\underline{\gamma} \leq\gamma_{NL}(x, s)\leq 
\begin{cases}
    \overline{\gamma}\left[1+\left( \frac{s}{s_0} \right)^{q-2}\right] & \text{if}\ q\ge 2,\\
    \overline{\gamma}\left( \frac{s}{s_0} \right)^{q-2} &  \text{if}\ 1<q< 2,
\end{cases}
\end{equation}
$\text{for a.e.}\ x\in {\overline A}\ \text{and}\ \forall s>0$.
Furthermore, $\gamma_{BG}$ fulfills
\begin{equation}
\sup_{\Omega\setminus A}\{\gamma_{BG}(x)\} < \underline{\gamma} 
\\
\label{eqn:nass2}
\end{equation}
In this case, the material property of the anomaly is (i) greater than that of the background and (ii) may be not upper bounded. An example of nonlinear material property compatible with conditions \eqref{eqn:nass2_0} and \eqref{eqn:nass2} is shown in \Cref{fig:sigma3}.
\begin{figure}[ht]
\centering
\includegraphics[width=.75\textwidth]{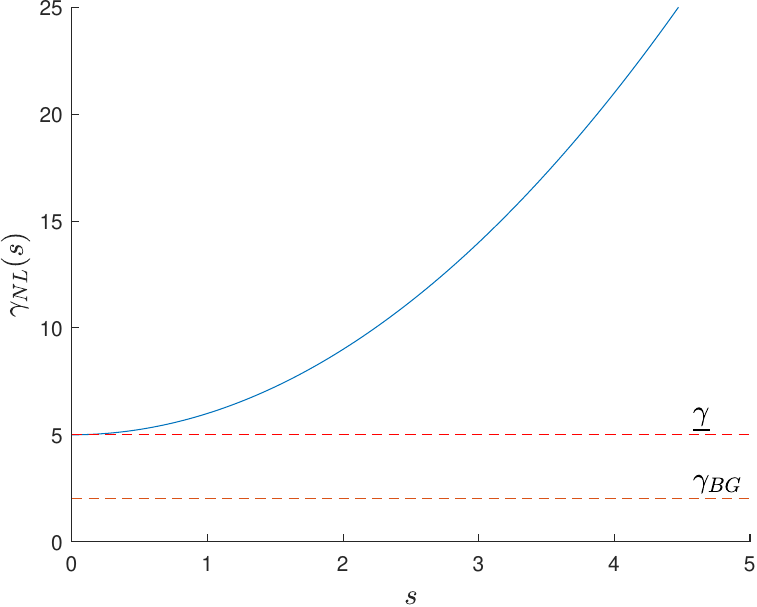}
\caption{$\gamma_{NL}$ compatible with conditions in~\eqref{eqn:nass2_0} and \eqref{eqn:nass2}}
\label{fig:sigma3}
\end{figure}

\begin{remark}\label{rem:ass_inf}
Assumption \eqref{eqn:nass2_0} bounds the growth rate of the material property. The case $q \geq 2$ corresponds to unbounded material properties increasing with the amplitude of $s$ ($\gamma\to +\infty$, for $s\to +\infty$). The case $q<2$ corresponds to unbounded material properties decreasing with the amplitude of $s$ ($\gamma\to+\infty$, for $s\to 0^+$). 

These assumptions cover almost all cases arising from applications (as superconductors or composite in field grading applications, see references in \Cref{sec:int}).
\end{remark}

Firstly, some definitions and lemmas are introduced. Specifically, the \emph{average} DtN related to the material property
\begin{equation}\label{eqn:sigmainf}
\gamma_A^{\infty}(x)=
\begin{cases}
\gamma_{BG}(x) & \text{in $\Omega\setminus A$}\\
+\infty & \text{in $A$}
.
\end{cases}
\end{equation}
is denoted as $\overline{\Lambda}^{\infty}_A$. The solution $u^{\infty}_A \in H^1(\Omega)$ related to $\gamma^\infty_A$ solves the problem
\begin{equation}\label{u_infty}
\begin{cases}
\nabla\cdot\left(\gamma_{BG}(x)\nabla u^{\infty}_A(x)\right)=0 & \text{in $\Omega\setminus A$} \\
\nabla u^{\infty}_A(x)= 0 & \text{in $A$}\\
u^{\infty}_A(x)=f(x) & \text{on $\partial \Omega$}. \\
\end{cases}
\end{equation}

The following Lemmas (see Appendix \ref{sec:appa}) provide crucial inequalities for treating inclusions with an infinite value for the material property. In the case of ERT, it is worth noting that the perfectly conducting inclusions naturally appear in nonlinear problems where the boundary data is either large or small enough (see \cite{corboesposito2023thep0laplacesignature,corboesposito2023theplaplacesignature}).
\begin{lemma}
\label{lem:moninf}
Let $\overline{\Lambda}_A$ and $\overline{\Lambda}_A^{\infty}$ be the \emph{average} DtN operators corresponding to the material properties $\gamma_A(x,E)$ and $\gamma_A^\infty(x)$, defined in \eqref{eqn:sigmaa} and \eqref{eqn:sigmainf}, respectively, under assumptions~\eqref{eqn:nass2}. Then, it results that
\begin{displaymath}
  \overline{\Lambda}_A\leqslant \overline{\Lambda}_A^{\infty}.
\end{displaymath}
\end{lemma}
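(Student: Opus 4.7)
The plan is to prove $\overline{\Lambda}_A \leqslant \overline{\Lambda}_A^{\infty}$ by a direct variational comparison, exploiting the identity $\langle \overline{\Lambda}(f), f\rangle = \mathbb{E}(u)$ of \eqref{ADtN=E} together with the minimality characterization \eqref{eqn:varprob}. Concretely, I want to show that for every $f \in X_\diamond$,
\begin{equation*}
\langle \overline{\Lambda}_A(f), f\rangle \leq \langle \overline{\Lambda}_A^{\infty}(f), f\rangle,
\end{equation*}
which by definition is exactly the stated inequality.

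First I would identify the variational interpretation of the right-hand side. The problem \eqref{u_infty} admits the variational characterization
\begin{equation*}
u_A^{\infty} = \argmin\left\{\int_{\Omega\setminus A} \tfrac{1}{2}\gamma_{BG}(x)|\nabla u(x)|^2\, dx : u \in H^1(\Omega),\ u|_{\partial\Omega} = f,\ \nabla u = 0 \text{ in } A\right\},
\end{equation*}
and the associated power product $\langle \overline{\Lambda}_A^{\infty}(f), f\rangle$ coincides with this minimum (i.e., with $\mathbb{E}_A^{\infty}(u_A^{\infty})$, defining $\mathbb{E}_A^{\infty}$ as above). In particular $u_A^{\infty}$ belongs to $H^1(\Omega)$, agrees with $f$ on $\partial\Omega$, and has $\nabla u_A^{\infty} \equiv 0$ in $A$.

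Next I would plug $u_A^{\infty}$ as a competitor into the minimization \eqref{eqn:varprob} that characterizes $u_A$. Since $u_A^{\infty}$ has vanishing gradient on $A$, we get
\begin{equation*}
\mathbb{E}_A(u_A^{\infty}) = \int_{\Omega\setminus A} Q_{\gamma_{BG}}(x,|\nabla u_A^{\infty}|)\, dx + \int_{A} Q_{\gamma_{NL}}(x,0)\, dx = \int_{\Omega\setminus A} \tfrac{1}{2}\gamma_{BG}(x)|\nabla u_A^{\infty}(x)|^2\, dx,
\end{equation*}
because $Q_{\gamma_{NL}}(x,0) = 0$. Thus $\mathbb{E}_A(u_A^{\infty}) = \mathbb{E}_A^{\infty}(u_A^{\infty})$, and minimality of $u_A$ for $\mathbb{E}_A$ yields
\begin{equation*}
\langle \overline{\Lambda}_A(f), f\rangle = \mathbb{E}_A(u_A) \leq \mathbb{E}_A(u_A^{\infty}) = \mathbb{E}_A^{\infty}(u_A^{\infty}) = \langle \overline{\Lambda}_A^{\infty}(f), f\rangle,
\end{equation*}
which is the desired inequality.

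The only genuine subtlety, and what I would flag as the main obstacle, is justifying that $u_A^{\infty}$ really is an admissible competitor in the functional space over which $\mathbb{E}_A$ is minimized. Under the growth bound (B2) the natural energy space may involve $W^{1,q}$-integrability of $\nabla u$ on $A$ when $q > 2$; however, since $\nabla u_A^{\infty} = 0$ on $A$ this is automatic, and on $\Omega\setminus A$ the background coefficient is bounded so $H^1$ integrability suffices. Hence $u_A^{\infty}$ lies in the correct admissible class, and $\mathbb{E}_A(u_A^{\infty}) < +\infty$, making the chain of inequalities legitimate. Arbitrariness of $f \in X_\diamond$ then gives $\overline{\Lambda}_A \leqslant \overline{\Lambda}_A^{\infty}$.
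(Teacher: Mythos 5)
Your proposal is correct and follows essentially the same route as the paper: both arguments insert $u_A^{\infty}$ as a competitor in the minimization \eqref{eqn:varprob} characterizing $u_A$, use $\nabla u_A^{\infty}=0$ in $A$ to kill the nonlinear contribution to $\mathbb{E}_A(u_A^{\infty})$, and identify the remaining background energy with $\langle \overline{\Lambda}_A^{\infty}(f),f\rangle$. Your additional remark on the admissibility of $u_A^{\infty}$ under the growth condition (B2) is a sensible clarification of a point the paper leaves implicit.
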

\begin{lemma}
\label{lem:ga}
Let $\Lambda_{BG}$ and $\Lambda_A^{\infty}$  be the DtN operators related to the material properties $\gamma_{BG}(x) \in L_+^\infty(\Omega)$ and $\gamma_A^\infty(x)$ defined in \eqref{eqn:sigmainf}. Assuming $\gamma_{BG}(x)$ piecewise analytic and condition~\eqref{eqn:nass2}, there exists a positive constant $K$ such that
\begin{equation*}
0\leq\langle \Lambda_A^{\infty}(f)-\Lambda_{BG}(f), f\rangle\leq K\int_A\abs{\nabla u_{BG}(x)}^2\,dx
\qquad\forall f \in X_\diamond,
\end{equation*}
with $u_{BG}$ solution of 
\begin{equation}
\label{u_BG}
\begin{cases}
\nabla\cdot\left(\gamma_{BG}(x)\nabla u_{BG}(x)\right)=0 & \text{ in } \Omega\\
u_{BG}(x)=f(x) & \text{ on }\partial \Omega. \\
\end{cases}
\end{equation}
\end{lemma}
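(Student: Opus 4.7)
Setting $v:=u_A^\infty-u_{BG}\in H^1_0(\Omega)$ (both potentials share the trace $f$ on $\partial\Omega$), I would first derive the Pythagorean identity
\[
\langle(\Lambda_A^\infty-\Lambda_{BG})(f),f\rangle=\int_\Omega\gamma_{BG}(x)\,|\nabla v(x)|^2\,dx.
\]
For this, I would combine the orthogonality $\int_\Omega \gamma_{BG}\nabla u_{BG}\cdot\nabla v\,dx=0$, obtained by testing the weak form of \eqref{u_BG} against $v\in H^1_0(\Omega)$, with the expansion $|\nabla u_A^\infty|^2=|\nabla u_{BG}+\nabla v|^2$ and the fact that $\nabla u_A^\infty\equiv 0$ on $A$ by \eqref{u_infty}; the latter reduces $\int_\Omega\gamma_{BG}|\nabla u_A^\infty|^2$ to the boundary pairing $\langle\Lambda_A^\infty(f),f\rangle$ via the divergence theorem and the Kirchhoff-type compatibility $\int_{\partial A_j}\gamma_{BG}\partial_n u_A^\infty\,dS=0$ on each connected component $A_j$ of $A$. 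The identity instantly yields the lower bound $\langle(\Lambda_A^\infty-\Lambda_{BG})(f),f\rangle\geq 0$.

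For the upper bound I would recast $v$ variationally. Because $\gamma_A^\infty=+\infty$ on $A$, the function $u_A^\infty$ is the minimizer of $\int_\Omega\gamma_{BG}|\nabla V|^2$ over $V\in H^1(\Omega)$ with $V|_{\partial\Omega}=f$ and $V$ constant on each $A_j$. Substituting $V=u_{BG}+w$ with $w\in H^1_0(\Omega)$ and reusing the orthogonality above identifies $v$ as the minimizer of $\int_\Omega\gamma_{BG}|\nabla w|^2$ over all $w\in H^1_0(\Omega)$ for which $u_{BG}+w$ is constant on each $A_j$. The plan is then to evaluate this minimum on an explicit competitor. On each $A_j$, I would set $\tilde v|_{A_j}:=\bar u_{BG}^{A_j}-u_{BG}|_{A_j}$ with $\bar u_{BG}^{A_j}:=|A_j|^{-1}\int_{A_j}u_{BG}\,dx$, so that the constraint holds and $\tilde v|_{A_j}$ has zero mean. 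Since each $A_j$ is a Lipschitz domain and $A\Subset\Omega$, I would extend via a bounded linear operator $E_j:H^1(A_j)\to H^1(\mathbb{R}^n)$ and multiply by mutually disjoint cutoffs $\chi_j\in C_c^\infty(\Omega)$ equal to $1$ in a neighborhood of $\overline{A_j}$; the resulting $\tilde v:=\sum_j\chi_j\,E_j(\tilde v|_{A_j})\in H^1_0(\Omega)$ is admissible.

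Poincar\'e's inequality on each $A_j$ (available thanks to the zero-mean choice) together with the continuity of the extensions and cutoffs then yields
\[
\|\tilde v\|_{H^1(\Omega)}^2\leq C\sum_j\|\nabla u_{BG}\|_{L^2(A_j)}^2=C\int_A|\nabla u_{BG}|^2\,dx,
\]
and, since $\gamma_{BG}\in L^\infty_+(\Omega)$, this closes the argument with $\langle(\Lambda_A^\infty-\Lambda_{BG})(f),f\rangle\leq K\int_A|\nabla u_{BG}|^2\,dx$ for a constant $K$ depending only on $A$, $\Omega$ and $\gamma_{BG}$. The delicate point I expect is the careful justification of the variational characterization of $u_A^\infty$---in particular the Kirchhoff-type condition that pins down the constants $c_j=u_A^\infty|_{A_j}$ and the ensuing orthogonality that identifies $v$ as a constrained minimizer. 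Beyond this structural step, the rest is a routine use of Poincar\'e's inequality and bounded $H^1$-extension operators; the piecewise analyticity of $\gamma_{BG}$ posited in the statement is not actually needed for this particular bound (it is required elsewhere, e.g.\ in the localized-potentials construction of Proposition~\ref{lem:bridge}).
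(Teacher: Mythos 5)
Your proof is correct, and it reaches the bound by a noticeably cleaner route than the paper, although both arguments are at heart variational competitor constructions. The paper works on the exterior domain $\Omega\setminus A$: it introduces the auxiliary function $w=u_{BG}-v$, where $v$ solves the Dirichlet problem with the constant datum $\overline{u}_{BG}$ on $\partial A$, bounds the energy of $w$ via the minimization problem \eqref{minim_w} together with an inverse trace inequality on $\partial A$, a trace inequality and Poincar\'e--Wirtinger on $A$, and then needs the divergence-theorem computation \eqref{eqn:maj}--\eqref{eqn:eqint} to convert that bound into a bound on $\langle \Lambda_A^{\infty}(f)-\Lambda_{BG}(f),f\rangle$ (whence the constant $K+1$), invoking the minimality of $u_A^{\infty}$ only at the end in \eqref{minim_ineq}. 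You instead work globally on $\Omega$ with $v=u_A^{\infty}-u_{BG}\in H^1_0(\Omega)$, and the orthogonality $\int_\Omega\gamma_{BG}\nabla u_{BG}\cdot\nabla v\,dx=0$ gives you an exact energy identity $\langle(\Lambda_A^{\infty}-\Lambda_{BG})(f),f\rangle=\int_\Omega\gamma_{BG}|\nabla v|^2\,dx$, which yields the lower bound for free and reduces the upper bound to evaluating a constrained minimum at one explicit competitor built from Sobolev extension operators and cutoffs rather than from the inverse trace on $\partial A$. Your version has two further merits: it makes the bookkeeping exact (no hidden $+1$ in the constant), and it treats the connected components $A_j$ of $A$ separately with componentwise averages $\overline{u}_{BG}^{A_j}$, whereas the paper uses a single average $\overline{u}_{BG}$ on all of $\partial A$, which is only adequate verbatim when $A$ is connected. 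You are also right on the two delicate points: the Kirchhoff-type condition $\int_{\partial A_j}\gamma_{BG}\partial_n u_A^{\infty}\,dS=0$ is exactly what makes $\langle\Lambda_A^{\infty}(f),f\rangle$ equal to the Dirichlet energy of $u_A^{\infty}$ and is implicit in the variational characterization that the paper itself uses in \eqref{minim_ineq}; and neither the piecewise analyticity of $\gamma_{BG}$ nor condition \eqref{eqn:nass2} is actually used in either proof of this particular estimate.
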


It is worth noting that the original problem \eqref{P1} reduces problem \eqref{u_BG} when there are no anomalies ($A=\emptyset$).

The proofs of Lemma \ref{lem:moninf} and Lemma \ref{lem:ga} are provided in Appendix~\ref{sec:appa}. An inequality similar to that of Lemma \ref{lem:ga} is available for the Neumann-to-Dirichlet operator in~\cite{art:Ga20}.

The following Theorem holds.
\begin{theorem}\label{lem:lem3}
Let $\gamma_{NL}$ satisfy (A1), (A2), (B2) and (C2), and $\gamma_{BG} \in L_+^\infty (\Omega)$ be piecewise analytic such that $\sup_{\Omega\setminus A}\{\gamma_{BG}(x)\} < \underline{\gamma}$. Let the material properties $\gamma_A(x,E)$ and $\gamma_T(x)$ be defined as in \eqref{eqn:sigmaa} and \eqref{eqn:sigmat}, respectively
. Then,
\begin{equation}
T \nsubseteq A^* \Longrightarrow \overline{\Lambda}_T\nleqslant\overline{\Lambda}_A\qquad \forall A, T\in\mathcal S (\Omega).
\end{equation}
Moreover, if $A\in\mathcal S (\Omega)$ has a connected complement, then for every $T \in \mathcal{S}(\Omega)$,
\begin{equation}
\label{doppia_implicazione2}
T\subseteq A \iff \overline{\Lambda}_T\leqslant\overline{\Lambda}_A \qquad\forall\ T\in\mathcal S (\Omega).
\end{equation}
\end{theorem}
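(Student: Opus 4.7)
The plan is to adapt the proof of Theorem~\ref{lem:lem2}, replacing the lower auxiliary material $\gamma_A^I$ by the ``infinite'' substitution $\gamma_A^\infty$ from~\eqref{eqn:sigmainf}. First, by Lemma~\ref{lem:moninf} one has $\overline{\Lambda}_A \leqslant \overline{\Lambda}_A^\infty$, and since $\overline{\Lambda}_A^\infty$ and $\overline{\Lambda}_T$ are both linear (so that $\overline{\Lambda}_A^\infty=\tfrac12\Lambda_A^\infty$ and $\overline{\Lambda}_T=\tfrac12\Lambda_T$) this reduces the task to exhibiting a sequence along which $\langle \Lambda_T(f_n) - \Lambda_A^\infty(f_n), f_n\rangle \to +\infty$. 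Lemma~\ref{lem:ga} then gives
\[
\langle \Lambda_T(f) - \Lambda_A^\infty(f), f\rangle \geq \langle \Lambda_T(f) - \Lambda_{BG}(f), f\rangle - K\int_A|\nabla u_{BG}(x)|^2\,dx,
\]
so it is enough to find $f_n\in X_\diamond$ making both $\int_A|\nabla u_{BG}^n|^2\,dx\to 0$ and $\langle (\Lambda_T-\Lambda_{BG})f_n, f_n\rangle\to+\infty$.

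Since $T\nsubseteq A^*$, I pick a ball $B_\varepsilon\subset T\setminus A^*$ and, using the very definition of the outer support, a relatively open $U\subseteq\overline\Omega$ with $B_\varepsilon\subset U$, $\partial U\cap\partial\Omega\neq\emptyset$, and $U\cap\overline{A}=\emptyset$. Applying Proposition~\ref{lem:bridge} with $\gamma=\gamma_{BG}$ (piecewise analytic by hypothesis) and $S_1=\Omega\setminus U$, $S_2=B_\varepsilon$ yields $\{f_n\}\subset X_\diamond$ whose solutions $u_{BG}^n$ satisfy $\int_{\Omega\setminus U}|\nabla u_{BG}^n|^2\,dx\to 0$ and $\int_{B_\varepsilon}|\nabla u_{BG}^n|^2\,dx\to+\infty$. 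In particular $\int_A|\nabla u_{BG}^n|^2\,dx\to 0$, disposing of the Lemma~\ref{lem:ga} error term.

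The lower bound for $\langle (\Lambda_T-\Lambda_{BG})f_n,f_n\rangle$ will come from the linear energy identity
\[
\langle \Lambda_T(f)-\Lambda_{BG}(f), f\rangle = \int_\Omega(\gamma_T-\gamma_{BG})|\nabla u_{BG}|^2\,dx - \int_\Omega\gamma_T|\nabla w|^2\,dx,\qquad w:=u_{BG}-u_T\in H^1_0(\Omega),
\]
together with the relation $\int_\Omega\gamma_T|\nabla w|^2\,dx=\int_\Omega(\gamma_T-\gamma_{BG})\nabla u_{BG}\cdot\nabla w\,dx$, which is immediate from the weak formulations of $u_T$ and $u_{BG}$. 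A Cauchy--Schwarz estimate then gives $\int_\Omega\gamma_T|\nabla w|^2\,dx\leq\int_T\tfrac{(\underline{\gamma}-\gamma_{BG})^2}{\underline{\gamma}}|\nabla u_{BG}|^2\,dx$, and a short algebraic simplification yields the key bound
\[
\langle \Lambda_T(f)-\Lambda_{BG}(f), f\rangle \geq \int_T\frac{(\underline{\gamma}-\gamma_{BG})\gamma_{BG}}{\underline{\gamma}}|\nabla u_{BG}|^2\,dx.
\]
Splitting this integral over $T\setminus A$ and $T\cap A$: on $T\setminus A$ the integrand is bounded below by a strictly positive constant (thanks to $\sup_{\Omega\setminus A}\gamma_{BG}<\underline{\gamma}$ and $\gamma_{BG}\geq c_0>0$) and $T\setminus A\supseteq B_\varepsilon$ captures the blow-up; on $T\cap A$ the integrand lies in $L^\infty$ while $|\nabla u_{BG}^n|^2\to 0$ in $L^1(A)$. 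Hence $\langle (\Lambda_T-\Lambda_{BG})f_n,f_n\rangle\to+\infty$.

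Chaining the three displays gives $\langle \overline{\Lambda}_T(f_n)-\overline{\Lambda}_A(f_n), f_n\rangle\to+\infty$, whence $\overline{\Lambda}_T\nleqslant\overline{\Lambda}_A$; when $A$ has a connected complement so that $A^*=A$, the equivalence~\eqref{doppia_implicazione2} follows by combining this implication with the Monotonicity Principle~\eqref{eqn:mono2}, exactly as in Theorems~\ref{lem:lem1}--\ref{lem:lem2}. I expect the main difficulty to be the sharp Cauchy--Schwarz step: any cruder control on $\int\gamma_T|\nabla w|^2$ would absorb the entire leading integral, and it is precisely the residual factor $\gamma_{BG}/\underline{\gamma}<1$ that prevents the estimate from collapsing once the localized potentials are allowed to concentrate.
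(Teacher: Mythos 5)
Your proposal is correct and follows essentially the same route as the paper: reduce via Lemma~\ref{lem:moninf} to the linear comparison $\overline{\Lambda}_A^{\infty}$, control $\langle \Lambda_A^{\infty}(f)-\Lambda_{BG}(f),f\rangle$ by $K\int_A|\nabla u_{BG}|^2\,dx$ using Lemma~\ref{lem:ga}, bound $\langle \Lambda_T(f)-\Lambda_{BG}(f),f\rangle$ from below by $\int_T\frac{\gamma_{BG}(\underline{\gamma}-\gamma_{BG})}{\underline{\gamma}}|\nabla u_{BG}|^2\,dx$, and blow this up with localized potentials concentrated in a ball of $T\setminus A^*$ while the $A$-term vanishes. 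The only difference is that you rederive the key comparison inequality inline (via the energy identity for $w=u_{BG}-u_T$ and a sharp Cauchy--Schwarz step), where the paper simply cites \cite[Lemma~2.1]{art:Bra17} for $p=2$; your derivation is correct and your explicit splitting of the final integral over $T\setminus A$ and $T\cap A$ is, if anything, slightly more careful than the paper's.
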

\begin{proof}
From Lemma \ref{lem:moninf}, it follows that
\begin{equation}
\label{eqn:infin1}
\begin{split}
    \langle\overline{\Lambda}_A(f)-\overline{\Lambda}_T(f),f\rangle&\leq \langle \overline{\Lambda}_A^{\infty}(f)-\overline{\Lambda}_T(f),f\rangle \\
    &= \langle\overline{\Lambda}_A^{\infty}(f)-\overline{\Lambda}_{BG}(f),f\rangle-\langle \overline{\Lambda}_{T}(f)-\overline{\Lambda}_{BG}(f),f\rangle,
\end{split}
\end{equation}
with $\overline{\Lambda}_{BG}$ being the \emph{average} DtN corresponding to material property $\gamma_{BG}$.

The first term at the r.h.s. of \eqref{eqn:infin1} can be upper bounded as follows
\begin{equation}\label{eqn:infin2}
\langle\overline{\Lambda}_A^{\infty}(f)-\overline{\Lambda}_{BG}(f),f\rangle=\frac{1}{2}\langle \Lambda_A^{\infty}(f)-\Lambda_{BG}(f),f\rangle\leq K_1 \int_{A}\abs{\nabla u_{BG}(x)}^2\,dx,
\end{equation}
where the equality holds since both $\overline{\Lambda}_{BG}$ and $\overline{\Lambda}_A^{\infty}$ are associated to linear material properties, whereas the inequality follows Lemma \ref{lem:ga}.

The second term at the r.h.s. of \eqref{eqn:infin1}, can be lower bounded as follows
\begin{equation}
\label{eqn:ikin}
\begin{split}
\langle\overline{\Lambda}_{T}(f)-\overline{\Lambda}_{BG}(f),f \rangle  & =\frac{1}{2}\langle \Lambda_{T}(f)-\Lambda_{BG}(f),f \rangle     \\
  &  \ge \frac{1}{2}\int_\Omega\frac{\gamma_{BG}(x)}{\gamma_T(x)}(\gamma_T(x)-\gamma_{BG}(x))\abs{\nabla u_{BG}(x)}^2\,dx \\
   & = \frac{1}{2}\int_T\frac{\gamma_{BG}(x)}{\underline{\gamma}}(\underline{\gamma}-\gamma_{BG}(x))\abs{\nabla u_{BG}(x)}^2\,dx,
\end{split}
\end{equation}
where the first line holds because both $\overline{\Lambda}_{BG}$ and $\overline{\Lambda}_A^{\infty}$ are associated to linear material properties, the second line comes from \cite[Lemma 2.1]{art:Bra17} for $p = 2$ (see also references therein) and, finally, the third line follows from the fact that $\gamma_T$ and $\gamma_{BG}$ agree on $\Omega\setminus T$.

By combining~\eqref{eqn:infin1},~\eqref{eqn:infin2} and~\eqref{eqn:ikin}, it results that
\begin{equation}\label{eqn:dis_PEC}
\begin{split}
    \left\langle \left(\overline{\Lambda}_A(f)-\overline{\Lambda}_T(f)\right),f\right\rangle 
    &\leq K_1\int_{A}\abs{\nabla u_{BG}(x)}^2\,dx\\
    &\quad -\frac{1}{2}\int_T\frac{\gamma_{BG}(x)}{\underline{\gamma}}(\underline{\gamma}-\gamma_{BG}(x))\abs{\nabla u_{BG}(x)}^2\,dx.
\end{split}
\end{equation}

Let $B_{\varepsilon}$ be a ball of radius $\varepsilon>0$ contained into the interior of $T\setminus A^*$, and let $U \subseteq \overline{\Omega}$ be a relatively open, connected to $\partial\Omega$ such that $B_{\varepsilon}\subset U$. 
\\ From Proposition~\ref{lem:bridge}, there exists a sequence of boundary potentials $\{f_n\}_{n\in\mathbb{N}}\subset X_{\diamond}$ such that the sequence of solutions  $\{u_n\}_{n\in\mathbb{N}}$ of problem \eqref{eqn:lemfn}, with $\gamma=\gamma_{BG}$, $S_1=\Omega\setminus U$, and $S_2=B_\varepsilon$,
has the asymptotic behaviour:
\begin{equation*}
\lim_{n\to+\infty}\int_{\Omega\setminus U}\abs{\nabla u_n(x)}^2\,dx=0 \quad \text{and} \quad \lim_{n\to+\infty}\int_{B_{\varepsilon}}\abs{\nabla u_n(x)}^2\,dx=+\infty.
\end{equation*}

Consequently, it turns out that 
\begin{align}
&\int_{A}\abs{\nabla u_n(x)}^2\,dx \le \int_{\Omega \setminus U}\abs{\nabla u_n(x)}^2\,dx \to 0, \label{lim_1_PEC} \\
& \int_{T} \abs{\nabla u_n(x)}^2\,dx\geq\int_{B_{\varepsilon}}\abs{\nabla u_n(x)}^2\,dx  \to+\infty.\label{lim_2_PEC}
\end{align}
Therefore, by combining \eqref{eqn:dis_PEC} for $f=f_n$ and $u_{BG}=u_n$, together with \eqref{lim_1_PEC} and \eqref{lim_2_PEC}, it results that
\begin{equation*}
\left\langle \left(\overline{\Lambda}_A(f_n)-\overline{\Lambda}_T(f_n)\right),f_n\right\rangle 
\to-\infty.
\end{equation*}

This proves that
\begin{equation}
    \label{mono_suff_PEC}
  T\nsubseteq A^* \Longrightarrow\overline{\Lambda}_T\nleqslant\overline{\Lambda}_A.
\end{equation}
When the outer support of $A$ coincides with $A$, i.e. $A^*=A$, equation \eqref{mono_suff_PEC} combined with \eqref{eqn:mono2} gives the equivalence stated in \eqref{doppia_implicazione2}.
\end{proof}

\subsection{$\mathbf{\bm{\gamma}_{NL}<\bm{\gamma}_{BG}}$ and possibly vanishing}\label{ano_zero}
In this section, $\gamma_{NL}$ and $\gamma_{BG}$ are such that
\begin{equation}
\begin{cases}
\overline{\gamma} <\inf_{\Omega\setminus A}\{
\gamma_{BG}(x) \}  &\text{ in } \Omega\setminus A\\
\underline{\gamma} \left(\frac{s}{s_0}\right)^{q-2}\leq \gamma_{NL}(x, s)\leq \overline{\gamma} &\text{ for a.e. }x\in A, \   \forall s> 0,
\end{cases}
\label{eqn:nass3}
\end{equation}
where $q\in [2,\infty)$ and $\overline{\gamma}$, $\underline{\gamma}$, $s_0$ are three proper constants. The second relationship in \eqref{eqn:nass3} is the assumption (B3) and, furthermore, $\gamma_{NL}$ satisfy (A1), (A2) and (C3).

In this case, the material property of the anomaly is (i) smaller than that of the background and (ii) may be vanishing. An example of nonlinear material property compatible with conditions~\eqref{eqn:nass3} is shown in \Cref{fig:sigma4}.

\begin{figure}[ht]
\centering
\includegraphics[width=.75\textwidth]{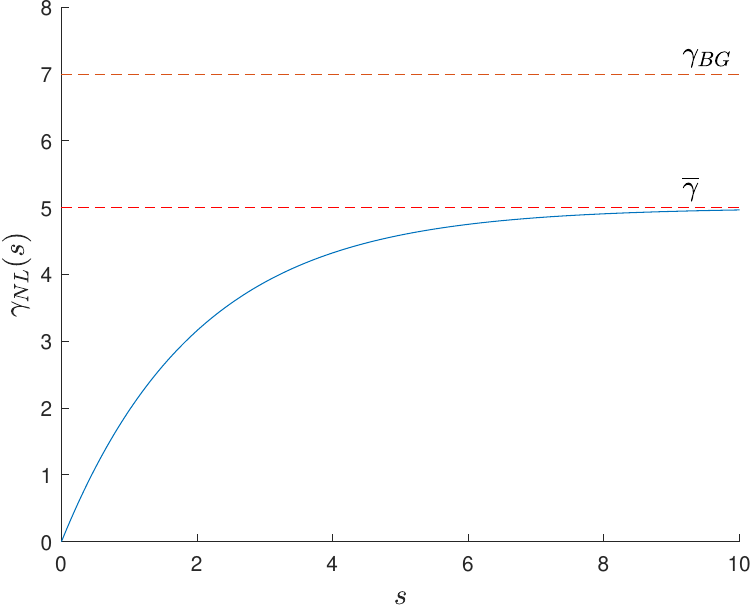}
\caption{$\gamma_{NL}$ compatible with conditions in~\eqref{eqn:nass3}}
\label{fig:sigma4}
\end{figure}


In this case, let $\overline{\Lambda}_A^0$ be the \emph{average} DtN associated to the material property
\begin{equation}\label{eqn:sigma0}
\gamma_A^0(x)=
\begin{cases}
\gamma_{BG}(x) & \text{ in }\Omega\setminus A\\
0 & \text{ in }A.
\end{cases}
\end{equation}
Let $u_A^0\in H^1(\Omega)$ be the solution of
\begin{equation*}
\begin{cases}
    \nabla\cdot(\gamma_A^0(x)\nabla u_A^0(x))=0  & \text{in }\Omega \\
    u_A^0(x)=f(x) & \text{on }\partial\Omega
\end{cases}
\end{equation*}
Solution $u_A^0$, when restricted to $\Omega \setminus A$, solves 
\begin{equation}\label{u_zero}
\begin{cases}
\nabla\cdot\left(\gamma_{BG}(x)\nabla u_A^0(x)\right)=0 & \text{in $\Omega\setminus A$} \\
\gamma_{BG}(x)\partial_\nu u_A^0(x)=0 & \text{on $\partial A$} \\
u_A^0(x)=f(x) & \text{on $\partial\Omega$},
\end{cases}
\end{equation}
whereas $u_A^0$ restricted to $A$ solves 
\begin{equation}
\label{u_zero_cascata}
    \begin{cases}
        \Delta u_A^0(x)=0 & \text{in $A$} \\
        u_A^0(x)=g(x) & \text{on $\partial A$},
    \end{cases}
\end{equation}
being $g$ the restriction to $\partial A$ of the solution $u_A^0$ of problem \eqref{u_zero}.

\begin{remark}
Considering ERT, from the physical standpoint, it is reminded that the scalar potential $u_A^0$ represents the electric field, via its gradient. System  \eqref{u_zero} corresponds to a steady current problem in $\Omega \setminus A$, being $A$ impenetrable (non conducting) to the electrical current density. System \eqref{u_zero_cascata} corresponds to an electrostatic problem in $A$. This latter system of PDEs requires the knowledge on $\partial A$ of the solution $u_A^0$ arising from \eqref{u_zero}.    
\end{remark}

The following Lemmas (see Appendix \ref{sec:appb} for the proofs) provide crucial inequalities for treating anomalies with zero value for the material property. In the case of ERT, it is worth noting that the perfectly insulating anomalies naturally appear
in nonlinear problems where the boundary data is either large or small enough (see \cite{corboesposito2023thep0laplacesignature,corboesposito2023theplaplacesignature}).

\begin{lemma}\label{lem:monnul}
Let $\overline{\Lambda}_A$ and $\overline{\Lambda}_A^{0}$ be the \emph{average} DtN operators corresponding to the material properties $\gamma_A(x,s)$ and $\gamma_A^0(x)$, defined in \eqref{eqn:sigmaa} and \eqref{eqn:sigma0}, respectively, under assumptions~\eqref{eqn:nass3}. Then, it results that
\begin{displaymath}
  \overline{\Lambda}_A\geqslant \overline{\Lambda}_A^0.
\end{displaymath}
\end{lemma}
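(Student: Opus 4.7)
My plan is to obtain the inequality as a direct instance of the nonlinear Monotonicity Principle established in \cite{MPpiecewise,art:Co21}, realized through the variational (Dirichlet energy) characterization of the power product that the paper already recalled in \eqref{ADtN=E}--\eqref{eqn:varprob}. The essential point is that the admissible-energy functional is monotone in the material property, and that $\gamma_A^{0}$ is pointwise dominated by $\gamma_A(\cdot,s)$ for every $s\geq 0$.

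Concretely, I would fix $f\in X_\diamond$ and let $u_A\in H^1(\Omega)$ be the unique solution of \eqref{P1} with material property $\gamma_A$; write $u_A^{0}$ for the corresponding solution with material property $\gamma_A^{0}$. Assumption (B3) in \eqref{eqn:nass3} gives $\gamma_{NL}(x,s)\geq 0$ for a.e.\ $x\in A$ and every $s\geq 0$, so from definitions \eqref{eqn:sigmaa} and \eqref{eqn:sigma0} one has the pointwise bound
\begin{equation*}
\gamma_A^{0}(x)\,\eta \;\leq\; \gamma_A(x,\eta)\,\eta \qquad \text{for a.e.\ } x\in\Omega,\ \forall\,\eta\geq 0,
\end{equation*}
which upon integration in $\eta$ over $[0,|\nabla v(x)|]$ and then in $x$ over $\Omega$ yields $\mathbb{E}_A^{0}(v)\leq \mathbb{E}_A(v)$ for every $v\in H^1(\Omega)$ with $v|_{\partial\Omega}=f$.

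The chain is then immediate. Using the identity \eqref{ADtN=E} for $\gamma_A^{0}$ together with the variational characterization \eqref{eqn:varprob}, and testing the minimum with $u_A$,
\begin{equation*}
\langle \overline{\Lambda}_A^{0}(f),f\rangle \;=\; \mathbb{E}_A^{0}(u_A^{0}) \;\leq\; \mathbb{E}_A^{0}(u_A) \;\leq\; \mathbb{E}_A(u_A) \;=\; \langle \overline{\Lambda}_A(f),f\rangle.
\end{equation*}
Since $f\in X_\diamond$ was arbitrary, this is exactly $\overline{\Lambda}_A^{0}\leqslant \overline{\Lambda}_A$ in the sense fixed in \Cref{sec:mpm}, i.e.\ the stated $\overline{\Lambda}_A\geqslant \overline{\Lambda}_A^{0}$.

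I do not expect substantive obstacles here: well-posedness of the problem for $\gamma_A^{0}$ (where $\gamma$ vanishes on $A$) is used only to give meaning to $\overline{\Lambda}_A^{0}$, and the Dirichlet-energy identity for the degenerate coefficient follows from the same arguments used in \cite{MPpiecewise} under (B3)--(C3); alternatively, one can bypass this by keeping the proof purely variational, defining $\langle\overline{\Lambda}_A^{0}(f),f\rangle$ directly as $\min\{\mathbb{E}_A^{0}(v):v|_{\partial\Omega}=f\}$, in which case the middle inequality above is an instance of the minimality and the final equality is unchanged. The only mild care needed is that the upper bound in (B3) keeps $\gamma_{NL}$ locally bounded so that $\mathbb{E}_A(u_A)<+\infty$, which is precisely what guarantees the last step is meaningful.
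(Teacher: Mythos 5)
Your argument is correct and is essentially the paper's own proof: the paper likewise drops the (nonnegative) contribution of the energy over $A$ — which is exactly your pointwise bound $\gamma_A^0\le\gamma_A$ — and then invokes the minimality of $u_A^0$ for the degenerate Dirichlet energy with $u_A$ as competitor, yielding the same chain of inequalities in the reverse order. Your remark on giving meaning to $\langle\overline{\Lambda}_A^0(f),f\rangle$ variationally is a reasonable way to handle the degeneracy that the paper treats by splitting the problem into \eqref{u_zero} and \eqref{u_zero_cascata}.
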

\begin{lemma} \label{lem:locpei}
Let $S_1, S_2 \subset\Omega$ be two open sets such that $\Omega \setminus \overline{S_1}$ is connected, $\partial S_1$ consists of a single connected component, and $S_2\subset \subset \Omega\setminus S_1^*$. Let the linear material property $\gamma\in L^{\infty}_+(\Omega)$ be piece-wise analytic. 
Then there exists a sequence $\{f_n\}_{n\in\mathbb{N}}\subset X_{\diamond}$ of boundary potentials such that the corresponding family of solutions $\{u_n\}_{n\in\mathbb{N}}$ fulfills
\begin{displaymath}
\lim_{n\to+\infty}\int_{S_1}\gamma(x)\abs{\nabla u_n(x)}^2\,dx=0\quad\text{and}\quad
\lim_{n\to+\infty}\int_{S_2}\gamma(x)\abs{\nabla u_n(x)}^2\,dx=+\infty,
\end{displaymath}
where $u_n$ is obtained with the material property $\gamma$ in $\Omega \setminus S_1$ and a vanishing material property in $S_1$, i.e. $\gamma=0$ in $S_1$ and $u_n$ restricted to $\Omega \setminus S_1$, solves
\begin{equation}\label{u_zero*}
\begin{cases}
\nabla\cdot\left(\gamma(x)\nabla u_n(x)\right)=0 & \text{in $\Omega\setminus S_1$} \\
\gamma(x)\partial_\nu u_n(x)=0 & \text{on $\partial S_1$} \\
u_n(x)=f_n(x) & \text{on $\partial\Omega$},
\end{cases}
\end{equation}
whereas $\nu$ is the outer normal to $S_1$ and $u_n$ restricted to $S_1$ solves
\begin{equation}\label{u_zero*_cascata}
    \begin{cases}
        \Delta u_n(x)=0 & \text{in $S_1$}\\
        u_n(x)=g_n(x) & \text{on $\partial S_1$},
    \end{cases}
\end{equation}
being $g_n$ the restriction to $\partial S_1$ of the solution $u_n$ of problem \eqref{u_zero*}.
\end{lemma}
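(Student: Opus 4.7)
The plan is to reformulate the problem as a mixed Dirichlet--Neumann boundary-value problem on $\tilde\Omega := \Omega\setminus\overline{S_1}$ (Dirichlet data $f_n$ on $\partial\Omega$, vanishing Neumann flux on $\partial S_1$), prove a localized-potentials result for this mixed BVP, and then control the harmonic extension into $S_1$ by a trace--Poincar\'e argument. Under the hypotheses, $\tilde\Omega$ is a bounded Lipschitz domain whose boundary splits into one outer component $\partial\Omega$ and one inner component $\partial S_1$. A direct unpacking of \Cref{def:outer} gives $S_1^* = \overline{S_1}\cup\partial\Omega$, whence $S_2 \Subset \Omega\setminus S_1^* = \tilde\Omega$. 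I would then choose a relatively open, connected set $U\subseteq\overline{\Omega}$ with $S_2\subset U\subset \Omega\setminus\overline{S_1}$, $\partial U\cap\partial\Omega\neq\emptyset$, and such that $W := \tilde\Omega\setminus\overline{U}$ is connected and contains an open neighborhood of $\partial S_1$: geometrically $U$ is a tube-with-bulb around $S_2$ reaching $\partial\Omega$, and since $n\ge 2$ and $\tilde\Omega$ is connected, a thin enough tube does not disconnect $\tilde\Omega$.

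Next I would prove the following mixed-BVP analogue of \Cref{lem:bridge}: there exists $\{f_n\}\subset X_\diamond$ such that the corresponding mixed-BVP solutions $\tilde u_n \in H^1(\tilde\Omega)$ satisfy
\begin{equation*}
\lim_{n\to\infty}\int_W \gamma\abs{\nabla\tilde u_n}^2\,dx = 0 \quad\text{and}\quad \lim_{n\to\infty}\int_{S_2}\gamma\abs{\nabla\tilde u_n}^2\,dx = +\infty.
\end{equation*}
The proof is a routine transposition of the Gebauer/Harrach/Garde strategy underlying \Cref{lem:bridge}: a Hahn--Banach/Runge approximation step dualized against the mixed-BVP energy pairing, combined with weak unique continuation for $\nabla\cdot(\gamma\nabla\cdot)$ with piecewise analytic $\gamma$. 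The homogeneous Neumann trace on $\partial S_1$ is inert for unique continuation (which is a purely interior property on $\tilde\Omega$), and Runge approximation only needs the target region to be reachable from $\partial\Omega$ through $\tilde\Omega$, which holds by the choice of $U$.

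To transfer the conclusion to $u_n$, the $S_2$ blow-up is immediate since $u_n = \tilde u_n$ on $S_2$ and $\gamma\ge c_0 > 0$. For the $S_1$ integral, $u_n|_{S_1}$ is the harmonic extension of $\tilde u_n|_{\partial S_1}$; subtracting any constant $c$ leaves $\|\nabla u_n\|_{L^2(S_1)}$ invariant, and the minimality of the harmonic extension together with the standard $H^1$-lifting of $H^{1/2}$ traces yields $\|\nabla u_n\|_{L^2(S_1)} \le C\,\inf_c \|\tilde u_n - c\|_{H^{1/2}(\partial S_1)}$. Picking $c$ to be the mean of $\tilde u_n$ on $W$ and applying the trace theorem on $W$ followed by Poincar\'e--Wirtinger on the connected set $W$ gives $\|\nabla u_n\|_{L^2(S_1)}\le C\,\|\nabla\tilde u_n\|_{L^2(W)} \to 0$, and since $\gamma\in L^\infty$ this upgrades to $\int_{S_1}\gamma\abs{\nabla u_n}^2\,dx\to 0$.

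The main obstacle is the second step: verifying that the localized-potentials machinery goes through for the mixed BVP. Conceptually it is a direct transposition of the Gebauer/Harrach argument, but the mixed trace structure demands care in setting up the correct duality pairing for the Runge step and in bookkeeping the piecewise-analytic hypothesis across the insulating inner boundary $\partial S_1$; the remaining pieces (trace, Poincar\'e--Wirtinger, minimality of the harmonic extension, $L^\infty$ bound on $\gamma$) reduce to standard elliptic theory.
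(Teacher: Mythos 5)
Your endgame (controlling the harmonic extension into $S_1$ via minimality of the Dirichlet energy, an $H^{1/2}$ lifting, and the trace theorem plus Poincar\'e--Wirtinger on a connected collar $W$ around $\partial S_1$) matches the paper's, and your version with $\inf_c\norm{\tilde u_n-c}_{H^{1/2}(\partial S_1)}$ is in fact slightly more careful than the paper's, which asserts that $u_n\to 0$ in $H^1(\Omega\setminus(S_1\cup U))$ when only the gradient is known to vanish there. The real divergence is in how the localization on $\Omega\setminus\overline{S_1}$ is obtained. You propose to re-run the Gebauer/Harrach Runge-approximation-plus-unique-continuation machinery directly for the mixed Dirichlet--Neumann problem on $\tilde\Omega$; the paper never touches the mixed problem's Runge theory. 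It instead takes the already-available localized potentials \eqref{variante_loc} for the \emph{unperturbed} Dirichlet problem on all of $\Omega$ (solutions $v_n$), and proves a perturbation estimate $\norm{\nabla w_n}_{L^2(\Omega\setminus S_1)}\le K\,\norm{\gamma_{BG}\nabla v_n}_{L^2(\Omega\setminus(S_1\cup U))}$ for the difference $w_n=v_n-u_n$, exploiting that $w_n$ has Neumann data $\gamma_{BG}\partial_\nu v_n$ on $\partial S_1$ whose $H^{-1/2}(\partial S_1)$ norm is controlled by the $H_{\mathrm{div}}$ norm of $\gamma_{BG}\nabla v_n$ away from $U$, where $\nabla\cdot(\gamma_{BG}\nabla v_n)=0$. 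Since the localization drives that right-hand side to zero, $u_n$ inherits the asymptotics of $v_n$ on $\Omega\setminus S_1$ for free. What the paper's route buys is that the only ``hard'' input remains \Cref{lem:bridge}/\eqref{variante_loc}, already established for the standard problem; what your route would buy is a self-contained localized-potentials statement for the insulated configuration. But the step you label a ``routine transposition'' is exactly the work the paper's comparison argument is designed to avoid, and as written it is an assertion rather than a proof: the adjoint/duality characterization of the virtual measurement operators and the Runge density statement for the mixed boundary-value problem would have to be set up and verified before your argument is complete.
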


\begin{remark}
Lemma \ref{lem:locpei} can be generalized for $\partial S_1$ consisting of multiple connected components and $S_1$ made by either a single or multiple connected components.

Without loss of generality, consider the steady state current problem, i.e. $\gamma=\sigma$ and $S_1$ a perfect insulating anomaly.
Specifically, let $S_1$ be equal to    $S_1=C_1\cup C_2 \cup \dots\cup C_m$, being $m \ge 1$, $C_1, \dots, C_m$ connected and disjoint. If $C_i=C_i^*$, for all $i=1,\dots,m$, then Lemma \ref{lem:locpei} holds without any modifications. If there exists $C_i \neq C_i^*$, then that there exists a connected component $B$ of $\Omega \setminus S_1$ that is not electrically connected to $\partial \Omega$. In this subset the electric field and the electrical current density are vanishing, therefore, it results that $\sigma(x)\partial_\nu u_n=0$, on $\partial B$.
\end{remark}

Here, our main theorem regarding the Converse of Monotonicity Principle is stated, for the case of material property of the anomaly smaller than the one of background. Anomalies, may also have a vanishing material property, i.e. $\gamma_{NL}(s,x)=0$ in $A$, for some $s$.

\begin{theorem}\label{lem:lem4}
Let $\gamma_{NL}$ satisfy (A1), (A2), (B3) and (C3), and $\gamma_{BG} \in L_+^\infty (\Omega)$ be piecewise analytic such that $\sup_{\Omega\setminus A}\{\gamma_{BG}(x)\} > \overline{\gamma}$. Let the material properties $\gamma_A(x,s)$ and $\gamma_T(x)$ be defined as in \eqref{eqn:sigmaa} and \eqref{eqn:sigmat1}, respectively.
Then,
\begin{equation}
T \nsubseteq A^* \Longrightarrow \overline{\Lambda}_T\nleqslant\overline{\Lambda}_A\qquad \forall A, T\in\mathcal S (\Omega).
\end{equation}
Moreover, if $A\in\mathcal S (\Omega)$ has a connected complement, then for every $T\subset\Omega$,
\begin{equation}
\label{doppia_implicazione3}
T\subseteq A \iff \overline{\Lambda}_T\leqslant\overline{\Lambda}_A \qquad\forall\ T\in\mathcal S (\Omega).
\end{equation}
\end{theorem}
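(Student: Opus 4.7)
The plan is to mirror the scheme used for \Cref{lem:lem3}, but with the perfectly insulating auxiliary DtN $\overline{\Lambda}_A^{0}$ replacing $\overline{\Lambda}_A^{\infty}$ and with \Cref{lem:locpei} supplying the localized potentials that concentrate energy in the ``non-cavity'' region $\Omega\setminus A^{\ast}$. First, I would apply \Cref{lem:monnul} to reduce the nonlinear comparison to the linear one:
\begin{equation*}
\langle \overline{\Lambda}_T(f)-\overline{\Lambda}_A(f),f\rangle \leq \langle \overline{\Lambda}_T(f)-\overline{\Lambda}_A^0(f),f\rangle.
\end{equation*}
Both operators on the right are associated with linear coefficients, so their power products equal, up to a factor $1/2$, the classical Dirichlet energies, and one can proceed by purely linear energy arguments.

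Next, using the minimality of $u_T$ for the quadratic energy defined by $\gamma_T$ and testing with $u_A^0$ (which lies in $H^1(\Omega)$ with trace $f$), I would obtain
\begin{equation*}
2\langle \overline{\Lambda}_T(f)-\overline{\Lambda}_A^0(f),f\rangle \leq \int_{\Omega}\bigl(\gamma_T(x)-\gamma_A^0(x)\bigr)|\nabla u_A^0(x)|^2\,dx.
\end{equation*}
By \eqref{eqn:sigmat1} and \eqref{eqn:sigma0}, the integrand splits over $A\setminus T$, $A\cap T$, and $T\setminus A$, producing
\begin{equation*}
\int_{A\setminus T}\gamma_{BG}|\nabla u_A^0|^2\,dx+\int_{A\cap T}\overline{\gamma}|\nabla u_A^0|^2\,dx-\int_{T\setminus A}(\gamma_{BG}-\overline{\gamma})|\nabla u_A^0|^2\,dx,
\end{equation*}
where, thanks to the gap $\inf_{\Omega\setminus A}\gamma_{BG}>\overline{\gamma}$ granted by \eqref{eqn:nass3}, the last integrand is uniformly positive.

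Assuming $T\nsubseteq A^{\ast}$, I would fix a ball $B_{\varepsilon}$ inside the open set $T\setminus A^{\ast}\subseteq\Omega\setminus A^{\ast}$ (necessarily contained in $T\setminus A$ since $A\subseteq A^{\ast}$), and apply \Cref{lem:locpei} with $S_1=A$, $S_2=B_{\varepsilon}$, and $\gamma=\gamma_{BG}$. This yields a sequence $\{f_n\}\subset X_{\diamond}$ whose solutions $u_n$ of the perfectly insulating problem \eqref{u_zero*}-\eqref{u_zero*_cascata} satisfy
\begin{equation*}
\int_A\gamma_{BG}|\nabla u_n|^2\,dx\to 0\quad\text{and}\quad\int_{B_{\varepsilon}}\gamma_{BG}|\nabla u_n|^2\,dx\to +\infty.
\end{equation*}
Substituting $u_n$ for $u_A^0$ and $f_n$ for $f$, the first two positive integrals vanish, while the last (negative) term diverges to $-\infty$, yielding $\langle \overline{\Lambda}_T(f_n)-\overline{\Lambda}_A(f_n),f_n\rangle\to -\infty$, hence $\overline{\Lambda}_T\nleqslant\overline{\Lambda}_A$. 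The equivalence \eqref{doppia_implicazione3} then follows by combining this with the direct MP \eqref{eqn:mono2} and observing that a connected complement forces $A^{\ast}=A$.

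The principal obstacle I anticipate is the careful deployment of \Cref{lem:locpei}, whose stated hypotheses require $\partial S_1$ to have a single connected component and $\Omega\setminus\overline{S_1}$ to be connected, neither of which is guaranteed for a generic $A\in\mathcal{S}(\Omega)$. The multi-component extension sketched in the remark following that lemma must be invoked: on each cavity of $A$ disconnected from $\partial\Omega$, the solution $u_n$ is forced to be constant by the zero-flux condition, so its gradient vanishes there, which is exactly what is required to keep the energy concentrated on $B_{\varepsilon}\subset\Omega\setminus A^{\ast}$. The remaining steps --- uniform positivity of $\gamma_{BG}-\overline{\gamma}$ in $\Omega\setminus A$ and uniform boundedness of $\gamma_{BG}$ --- are routine and parallel the corresponding passages in the proofs of \Cref{lem:lem1,lem:lem2,lem:lem3}.
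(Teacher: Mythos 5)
Your proposal is correct and follows essentially the same route as the paper's proof: reduce to the linear perfectly-insulating comparison via \Cref{lem:monnul}, bound $\langle\Lambda_T(f)-\Lambda_A^0(f),f\rangle$ by testing the $\gamma_T$-energy with $u_A^0$, split the resulting integral, and blow up the negative term on a ball $B_\varepsilon\subset T\setminus A^\ast$ using \Cref{lem:locpei} with $S_1=A$. The only (immaterial) difference is that you decompose over $A$ while the paper decomposes over $A^\ast$, and you make explicit the appeal to the multi-component remark after \Cref{lem:locpei}, which the paper leaves implicit.
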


\begin{proof}
In the following it is assumed that $T\cap A^* \neq \emptyset$. The case when $T\cap A^*$ is empty, can be treated similarly by deleting any integrals over $T\cap A^*$. 

First of all, it is worth noting that
\begin{equation}\label{eqn:in_PEI_2}
\begin{split}
2\langle \overline{\Lambda}_T(f)-\overline{\Lambda}_A(f),f\rangle & \leq   \left\langle {\Lambda}_T(f)-{\Lambda}_A^0(f),f\right\rangle\\
& = \int_{\Omega} \gamma_T(x) \abs{\nabla u_T (x)}^2 \, dx - \int_{\Omega} \gamma_A^0(x) \abs{\nabla u_A^0 (x)}^2 \, dx\\
& \leq \int_{\Omega} \left( \gamma_T(x) - \gamma_A^0(x) \right)\abs{\nabla u_A^0 (x)}^2 \, dx \\
& \leq \int_{A^*\setminus T}\gamma_{BG}(x)\abs{\nabla u^0_A(x)}^2\,dx + \int_{A^*\cap T}\overline{\gamma}\abs{\nabla u^0_A(x)}^2\,dx\\
&\quad -\int_{T\setminus A^*}\left(\gamma_{BG}(x)-\overline{\gamma}\right)\abs{\nabla u_A^0(x)}^2\,dx\qquad\forall f \in X_\diamond.
\end{split}
\end{equation}
where in the first line it is exploited \Cref{lem:monnul} and that the average DtN is one half of the \lq\lq classical\rq\rq \ DtN for linear materials (see \eqref{P4}), in the second line \eqref{w-DtN-f} written for both $\gamma_T$ and $\gamma_A^0$ is used, in the third line the minimality of the Dirichlet Energy (see \Cref{MathModel} for a material property equal to $\gamma_T$, rather than $\gamma_A$) is used, in the last line it is exploited that
\begin{equation}
\gamma_T(x) - \gamma_A^0(x) \le\left \{ 
\begin{aligned}
& 0 & & \text{ in } {\Omega \setminus \left( T \cup A^* \right)} \\
&\gamma_{BG}(x) & & \text{ in } { A^* \setminus T} \\
& \overline\gamma & & \text{ in } {A^* \cap T} \\
&  \overline\gamma  -\gamma_{BG} (x)& & \text{ in } {T \setminus A^*}.
\end{aligned}
\right.
\end{equation}

Let $B_{\varepsilon}$ be a spherical neighborhood contained into the interior of $T\setminus A^*$, and let $U \subseteq \overline{\Omega}$ be a relatively open, connected to $\partial\Omega$ such that $B_{\varepsilon}\subset U$ (see \Cref{fig:ta1}).
From Lemma~\ref{lem:locpei}, it follows that there exists a sequence of boundary potentials $\{f_n\}_{n\in\mathbb{N}}\subset X_{\diamond}$ such that the sequence of extended solutions $\{u_n\}_{n\in\mathbb N}$ of problems \eqref{u_zero*} (for $\gamma=\gamma_{BG}$ in $\Omega\setminus S_1$) and \eqref{u_zero*_cascata} has the asymptotic behaviour:
\begin{equation*}
\lim_{n\to+\infty}\int_{\Omega\setminus U}\abs{\nabla u_n(x)}^2\,dx=0 \quad \text{and} \quad \lim_{n\to+\infty}\int_{B_{\varepsilon}}\abs{\nabla u_n(x)}^2\,dx=+\infty.
\end{equation*}
Consequently, it turns out that 
\begin{align}
&\int_{A^*\setminus T}\abs{\nabla u_n(x)}^2\,dx  \to 0, \label{lim_1_PEI} \\
& \int_{A^*\cap T}\abs{\nabla u_n(x)}^2\,dx  \to 0, \label{lim_2_PEI} \\
& \int_{T\setminus A^*} \abs{\nabla u_n(x)}^2\,dx\geq\int_{B_{\varepsilon}}\abs{\nabla u_n(x)}^2\,dx  \to+\infty.\label{lim_3_PEI}
\end{align}
Therefore, by combining \eqref{eqn:in_PEI_2} for $f=f_n$ and $u_A^0=u_n$, together with \eqref{lim_1_PEI}-\eqref{lim_3_PEI}, it results that
\begin{equation*}
\langle\overline{\Lambda}_A(f_n)-\overline{\Lambda}_T(f_n),f_n\rangle\to-\infty.
\end{equation*}

This proves that
\begin{equation}
    \label{mono_suff_PEI}
  T\nsubseteq A^* \Longrightarrow\overline{\Lambda}_T\nleqslant\overline{\Lambda}_A.
\end{equation}

When the outer support of $A$ coincides with $A$, i.e. $A^*=A$, equation \eqref{mono_suff_PEI} combined with \eqref{eqn:mono2} gives the equivalence stated in \eqref{doppia_implicazione3}.
\end{proof} 

\section{Stability of the Imaging Method}\label{sec:stab}
In this section, the intrinsic stability of the MP method and the robustness of the reconstructions with respect to the measurement noise is discussed and proven. Specifically, the following results complement Sections \ref{sec:reconstruction} and \ref{sub_converse}, giving a rigorous proof for \eqref{eqn:01}-\eqref{eqn:conv},\eqref{eqn:cmp1}-\eqref{eqn:inclusions}.
\subsection{The theoretical limit} 
As already pointed out in \Cref{sec:reconstruction}, the theoretical limit proved in \Cref{prop_limits}, clearly highlights the role played by the Converse in MP imaging methods. Specifically, in the absence of noise, a MP imaging method reconstructs the anomaly $A$ with possibly some (or part) of its internal cavities that are not electrically connected to the boundary $\partial\Omega$.

\begin{theorem}
\label{prop_limits}
Let $A^{\dag}$ the pseudo-solution defined in \eqref{eqn:recon}. It result that
\begin{equation*}
    A\subseteq A^{\dag }\subseteq A^{\ast }. 
\end{equation*}
\end{theorem}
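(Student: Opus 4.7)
The plan is to establish the two inclusions separately, with the first coming directly from the Monotonicity Principle and the second from its Converse proved in Section \ref{sec:main}.

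For the first inclusion $A\subseteq A^{\dag}$, I would observe that $A$ itself is an admissible candidate in the union defining $A^{\dag}$. Since by assumption $A\in\mathcal{S}(\Omega)$, taking the test domain $T=A$ yields $\overline{\Lambda}_A-\overline{\Lambda}_T=0\geqslant 0$ trivially. Hence $A$ belongs to the family $\{T\in\mathcal{S}(\Omega)\,|\,\overline{\Lambda}_A-\overline{\Lambda}_T\geqslant 0\}$, and the inclusion $A\subseteq A^{\dag}$ follows at once from the definition \eqref{eqn:recon}. Of course one could equivalently note that the Monotonicity Principle \eqref{eqn:mono2} guarantees that every $T\in\mathcal{S}(\Omega)$ with $T\subseteq A$ lies in the above family, so all such $T$ sit inside $A^{\dag}$, and in particular $A$ does.

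For the second inclusion $A^{\dag}\subseteq A^{\ast}$, I would pick an arbitrary test set $T\in\mathcal{S}(\Omega)$ with $\overline{\Lambda}_A-\overline{\Lambda}_T\geqslant 0$, i.e. $\overline{\Lambda}_T\leqslant\overline{\Lambda}_A$. The Converse of the Monotonicity Principle, in the contrapositive form given by Theorems \ref{lem:lem1}, \ref{lem:lem2}, \ref{lem:lem3} and \ref{lem:lem4} (whichever applies depending on whether $\gamma_{NL}$ falls in class (B1)(C1), (B2)(C2) or (B3)(C3)), says exactly that $\overline{\Lambda}_T\leqslant\overline{\Lambda}_A$ implies $T\subseteq A^{\ast}$. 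Therefore every member of the family used to build $A^{\dag}$ is contained in $A^{\ast}$, and taking the union preserves the inclusion, giving $A^{\dag}\subseteq A^{\ast}$.

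The main obstacle here is not the argument itself, which reduces to a one–line application of the MP in one direction and of its Converse in the other. The substantive work has already been carried out in Section \ref{sec:main}, where the Converse had to be proved separately for each of the three regimes (bounded; possibly unbounded with power-like growth; possibly vanishing) by constructing suitable localized potentials and exploiting the appropriate comparison with the limit operators $\overline{\Lambda}_A^{\infty}$ or $\overline{\Lambda}_A^{0}$. Once those theorems are available, Theorem \ref{prop_limits} is essentially a bookkeeping step that packages the MP together with its Converse into a single two–sided inclusion, and which in particular makes precise the sense in which $A^{\ast}$, rather than $A$, is the ideal noise-free target of the reconstruction rule \eqref{eqn:recon}.
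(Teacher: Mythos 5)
Your proposal is correct and follows essentially the same route as the paper: the inclusion $A\subseteq A^{\dag}$ comes from $A\in\mathcal S(\Omega)$ being itself an admissible test domain (this is exactly \eqref{eqn:AAdag}), and $A^{\dag}\subseteq A^{\ast}$ follows by rewriting the Converse of the MP in the contrapositive form $\overline{\Lambda}_{T}\leqslant\overline{\Lambda}_{A}\Rightarrow T\subseteq A^{\ast}$ and applying it to every member of the union in \eqref{eqn:recon}. No gaps.
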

\begin{proof}
The Converse of MP \eqref{eqn:conv0}, in an equivalent form, is%
\begin{equation}
\overline{\Lambda }_{T}\leq \overline{\Lambda }_{A}\Rightarrow T\subseteq
A^{\ast }.  \label{eqn:06}
\end{equation}%
Equation (\ref{eqn:06}) together with definition (\ref{eqn:recon}), implies that $A^{\dag }\subseteq A^{\ast }$. Relationship $A\subseteq A^{\dag }$ is derived in \eqref{eqn:AAdag}.
\end{proof}

\subsection{The pseudosolution with noisy data}

In the presence of noise, the pseudosolution is%
\begin{equation}
A^{\dag }=\bigcup \left\{ T:\mathcal{P}_{A}^{\bm{\eta}}(f)-\mathcal{P}_{T}(f)\geq
0,\ \forall f\in X_\diamond\right\},  \label{eqn:11}
\end{equation}%
where $\mathcal{P}_{A}^{\bm{\eta}}$ is the noisy data as defined in \eqref{eqn:noisemodel}. Introducing $\mathcal{P}_{\varnothing }(f)$ as the measured power product (see \eqref{eqn:noisemes}) in absence of anomalies, i.e. for $A=\varnothing$, the pseudosolution can be conveniently recast as
\begin{equation}
\label{eqn:ps_noisy}
A^{\dag }=\bigcup \left\{ T:\Delta \mathcal{P}_{A,T}(f)+\mathcal{P}%
_{\varnothing }(f)\eta _{1}\xi _{1}+\xi _{2}\eta _{2}L\geq 0,\ \forall
f\in X_\diamond \right\} , 
\end{equation}
where $\Delta \mathcal{P}_{A,T}(f)=\left[ \mathcal{P}_{A}(f)-\mathcal{P}_{\varnothing }(f)\right] (1+\eta _{1}\xi _{1})-\left[ \mathcal{P}_{T}(f)-\mathcal{P}_{\varnothing }(f)\right]$.


The quantity $\Delta \mathcal{P}_{A,T}(f)$ can be made arbitrarily small, on a proper sequence $\left\{ f_{n}\right\} _{n\in \mathbb{N}}\subseteq X_\diamond$. This is due to the existence of a sequence $\{f_n\}_n$ of boundary potentials that localize $\gamma\abs{\nabla u}^2$ near the boundary of the domain $\Omega$, as proved in the next \Cref{thm_existence_sequence}.
\begin{proposition}
\label{thm_existence_sequence}
For any open bounded domain $D\Subset \Omega$ with Lipschitz boundary, there exists a sequence $%
\left\{ f_{k}\right\} _{k\in \mathbb{N}}\subseteq X_{\diamond }$ such that 
\begin{eqnarray}
\mathcal{P}_{\varnothing }(f_{n}) &=&1  \label{min1ass1_thm} \\
\lim_{n\rightarrow +\infty }\mathcal{P}_{D}(f_{n})-\mathcal{P}_{\varnothing
}(f_{n}) &=&0.  \label{min1ass2_thm} 
\end{eqnarray}
\end{proposition}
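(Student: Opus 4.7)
The plan is to construct $\{f_n\}$ by localizing the background energy away from $D$, so that replacing $\gamma_{BG}$ by the nonlinear $\gamma_D$ inside $D$ produces a vanishing perturbation of the power product. First I would invoke \Cref{lem:bridge} for the linear piecewise analytic coefficient $\gamma_{BG}$: since $D\Subset\Omega$, I can choose a relatively open set $U\subseteq\overline{\Omega}$ connected to $\partial\Omega$ with $\overline U\cap\overline D=\emptyset$ and a ball $B_\varepsilon\subset U$, and then obtain a sequence $\{g_n\}\subset X_\diamond$ whose background solutions $v_n$ satisfy $\int_D\gamma_{BG}|\nabla v_n|^2\,dx\to 0$ and $\int_\Omega\gamma_{BG}|\nabla v_n|^2\,dx\to +\infty$. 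Setting $f_n=g_n/\sqrt{\mathcal{P}_\varnothing(g_n)}$ (well defined for $n$ large) exploits linearity of the background problem to enforce $\mathcal{P}_\varnothing(f_n)=1$, while the associated background solutions $\tilde v_n$ retain the localization $\int_D|\nabla\tilde v_n|^2\,dx\to 0$.

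Next I would estimate $\mathcal{P}_D(f_n)-\mathcal{P}_\varnothing(f_n)$ by a two-sided variational bound. Using \eqref{ADtN=E} together with the minimality of $u_{D,n}$ for $\mathbb{E}_D$ and of $\tilde v_n$ for $\mathbb{E}_\varnothing$, and noting that $\mathbb{E}_D$ and $\mathbb{E}_\varnothing$ differ only through an integral over $D$,
\begin{equation*}
\mathbb{E}_D(u_{D,n})-\mathbb{E}_\varnothing(u_{D,n})\ \leq\ \mathcal{P}_D(f_n)-\mathcal{P}_\varnothing(f_n)\ \leq\ \mathbb{E}_D(\tilde v_n)-\mathbb{E}_\varnothing(\tilde v_n),
\end{equation*}
and both extremes reduce to $\int_D[Q_{\gamma_{NL}}(x,s)-Q_{\gamma_{BG}}(x,s)]\,dx$ evaluated along $s=|\nabla\tilde v_n|$ and $s=|\nabla u_{D,n}|$ respectively.

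Closing the argument requires showing both integrals vanish. The growth assumptions (B1)--(B3) give a pointwise estimate $|Q_{\gamma_{NL}}(x,s)-Q_{\gamma_{BG}}(x,s)|\leq C_1 s^2+C_2 s^q$, so the upper bound is controlled by $\|\nabla\tilde v_n\|_{L^2(D)}^2+\|\nabla\tilde v_n\|_{L^q(D)}^q$. The $L^2$ part vanishes by construction; for $q>2$ I would enlarge slightly to $D\Subset D'\Subset\Omega\setminus\overline U$, observe that $\int_{D'}|\nabla\tilde v_n|^2\,dx\to 0$ as well, and apply interior higher integrability (Meyers/Caccioppoli in regions of analyticity of $\gamma_{BG}$) to $\tilde v_n$ minus its mean on $D'$, yielding $\|\nabla\tilde v_n\|_{L^q(D)}\to 0$. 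For the lower bound, whose integrand instead involves $|\nabla u_{D,n}|$, I would compare $u_{D,n}$ and $\tilde v_n$ by testing the nonlinear weak formulation with $w_n=u_{D,n}-\tilde v_n\in H^1_0(\Omega)$ and invoking strict monotonicity (C1)--(C3) to bound $\|w_n\|_{H^1(\Omega)}$ by a power of $\|\nabla\tilde v_n\|_{L^p(D)}$ for the $p$ dictated by the relevant assumption, thereby forcing $\int_D|\nabla u_{D,n}|^2\,dx\to 0$.

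The main obstacle I anticipate is this last transfer in the unbounded/vanishing regimes (B2)--(B3): combining strict monotonicity of $\gamma_{NL}$ with higher integrability of the linear background solution in a neighborhood of $\overline D$ is what allows the comparison between $u_{D,n}$ and $\tilde v_n$ to yield vanishing gradient energy on $D$ at the correct Lebesgue exponent. In the bounded case (B1) this step becomes immediate, requiring only the $L^2$-localization already built into the first step.
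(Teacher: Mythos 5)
Your overall architecture is genuinely different from the paper's, and its first two steps are correct: you localize the background energy away from $D$ (the paper does the same thing via Kohn--Vogelius depleting potentials concentrated near a boundary point rather than via \Cref{lem:bridge}, but the normalization and the resulting facts $\mathcal P_\varnothing(f_n)=1$, $\int_D\abs{\nabla\tilde v_n}^2\,dx\to 0$ are equivalent), and the two-sided sandwich $\mathbb E_D(u_{D,n})-\mathbb E_\varnothing(u_{D,n})\le\mathcal P_D(f_n)-\mathcal P_\varnothing(f_n)\le\mathbb E_D(\tilde v_n)-\mathbb E_\varnothing(\tilde v_n)$ is valid and does close the bounded case (B1), and with some care the vanishing case (B3), where $\gamma_{NL}\le\overline\gamma$ keeps both extremes controlled by an $L^2$ gradient integral over $D$.

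The gap is in the unbounded regime (B2) with $q>2$. There your upper extreme is controlled by $\norm{\nabla\tilde v_n}_{L^q(D)}^q$ and your lower extreme by $\norm{\nabla u_{D,n}}_{L^q(D)}^q$, and the tools you invoke do not deliver these. Meyers' theorem gives only $L^{2+\epsilon}$ for an $\epsilon$ determined by the ellipticity ratio, not $L^q$ for an arbitrary prescribed $q$; and interior smoothness estimates on a neighborhood of $\overline D$ are not available because $\gamma_{BG}$ is only piecewise analytic, so $D$ may straddle interfaces between analytic pieces, where the gradient is no better than $L^{2+\epsilon}$ unless one invokes much stronger transmission-regularity results. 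The same obstruction reappears, amplified, in your monotone-operator comparison for the lower bound, whose source term $\int_D\left(\gamma_{NL}(x,\abs{\nabla\tilde v_n})-\gamma_{BG}(x)\right)\nabla\tilde v_n\cdot\nabla w_n\,dx$ involves $\abs{\nabla\tilde v_n}^{q-1}$. The paper avoids all of this with a different idea: by \Cref{lem:moninf} the nonlinear power product is dominated by that of the \emph{perfectly conducting} linear inclusion, $0\le\mathcal P_D(f)-\mathcal P_\varnothing(f)\le\mathcal P^\infty_D(f)-\mathcal P_\varnothing(f)$, and \Cref{lem:ga} bounds this purely linear quantity by $K\int_D\abs{\nabla u_{BG}}^2\,dx$ (with the mirror-image argument via \Cref{lem:monnul} and \Cref{in_ins_lemma} for the vanishing case). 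This reduction to the extreme linear problems removes any need for higher integrability of $\nabla\tilde v_n$ or $\nabla u_{D,n}$ on $D$ and closes the unbounded case using only the $L^2$ localization you already have. A secondary, fixable point: applying \Cref{lem:bridge} with $S_1=D$ requires $\Omega\setminus(\overline D\cup\overline{B_\varepsilon})$ to be connected, which fails if $D$ has cavities; use instead the variant \eqref{variante_loc} with $D\subset\Omega\setminus U$.
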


\begin{proof}
The proof is given in \Cref{Loc_bou_pot_appendix}.  
\end{proof}

\begin{proposition}
    \label{lem:min1}
Let $D$ be a set well contained in $\Omega$, there exists a sequence $\left\{ f_{n}\right\} _{n\in \mathbb{N}}\subseteq X_\diamond$ such that%
\[
\lim_{n\rightarrow +\infty }\Delta \mathcal{P}_{A,T}(f_{n})=0 .
\]
\end{proposition}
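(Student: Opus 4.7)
The plan is to combine \Cref{thm_existence_sequence} with the Monotonicity Principle to sandwich the two power--product differences appearing in $\Delta\mathcal{P}_{A,T}$ by a quantity already known to vanish. Since $A,T\in\mathcal{S}(\Omega)$ are both well contained in $\Omega$, I first fix an open set $D\Subset\Omega$ with Lipschitz boundary such that $\overline{A\cup T}\subset D$. Applying \Cref{thm_existence_sequence} to this $D$ produces a sequence $\{f_n\}_{n\in\mathbb{N}}\subseteq X_{\diamond}$ normalized by $\mathcal{P}_{\varnothing}(f_n)=1$ and satisfying $\mathcal{P}_D(f_n)-\mathcal{P}_{\varnothing}(f_n)\to 0$.

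The key step is then to transfer this vanishing to the analogous differences for $A$ and for $T$. This follows from the Monotonicity Principle \eqref{eqn:mono2} applied along the chains $\varnothing\subseteq A\subseteq D$ and $\varnothing\subseteq T\subseteq D$. Under the assumptions of \Cref{ano_1} and \Cref{ano_infty} (where $\gamma_{NL}>\gamma_{BG}$) the power products form an increasing chain, whereas under the assumptions of \Cref{ano_2} and \Cref{ano_zero} (where $\gamma_{NL}<\gamma_{BG}$) the chain is reversed; in either regime one obtains
\[
\bigl|\mathcal{P}_B(f_n)-\mathcal{P}_{\varnothing}(f_n)\bigr|\leq\bigl|\mathcal{P}_D(f_n)-\mathcal{P}_{\varnothing}(f_n)\bigr|,\qquad B\in\{A,T\},
\]
so that both $\mathcal{P}_A(f_n)-\mathcal{P}_{\varnothing}(f_n)$ and $\mathcal{P}_T(f_n)-\mathcal{P}_{\varnothing}(f_n)$ tend to zero. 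Substituting into the explicit expression
\[
\Delta \mathcal{P}_{A,T}(f_n)=\bigl[\mathcal{P}_A(f_n)-\mathcal{P}_{\varnothing}(f_n)\bigr](1+\eta_1\xi_1)-\bigl[\mathcal{P}_T(f_n)-\mathcal{P}_{\varnothing}(f_n)\bigr]
\]
then delivers $\Delta\mathcal{P}_{A,T}(f_n)\to 0$, since $\eta_1\xi_1$ is bounded independently of $n$.

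The main obstacle I expect is making the sandwich rigorous in the unbounded regime of \Cref{ano_infty} and the vanishing regime of \Cref{ano_zero}, where the well--posedness and finiteness of $\mathcal{P}_D(f_n)$ are more delicate. In the unbounded case, \Cref{lem:ga} already bounds the nonlinear excess by the linear background energy inside $D$, and the analogous estimate for the vanishing case is built into the proof of \Cref{lem:lem4}; these ingredients should suffice to make the chain of inequalities quantitative. An alternative, avoiding any case distinction, would be to argue variationally: since the potentials $\{f_n\}$ produced by \Cref{thm_existence_sequence} localize $\gamma|\nabla u_n|^2$ away from $D$, the Dirichlet energy \eqref{eqn:Ea} with any admissible anomaly placed inside $D$ differs from the background energy by a quantity controlled by $\int_D\gamma|\nabla u_n|^2\,dx$, and the latter vanishes in the limit by construction.
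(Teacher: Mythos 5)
Your proposal is correct and follows essentially the same route as the paper: both invoke \Cref{thm_existence_sequence} for a sequence localizing the energy away from a set $D\supseteq A\cup T$, then use the Monotonicity Principle to sandwich $\mathcal{P}_A(f_n)-\mathcal{P}_{\varnothing}(f_n)$ and $\mathcal{P}_T(f_n)-\mathcal{P}_{\varnothing}(f_n)$ between $0$ and $\mathcal{P}_D(f_n)-\mathcal{P}_{\varnothing}(f_n)$, and conclude by the triangle inequality. Your explicit attention to the reversed sign of the chain in the $\gamma_{NL}<\gamma_{BG}$ regimes is in fact slightly more careful than the paper's own write-up, which drops the absolute values as if the differences were always nonnegative.
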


\begin{proof}
\Cref{thm_existence_sequence} assures the existence of a sequence $\{f_n\}_{n\in\N}$ satisfying \eqref{min1ass1_thm} and \eqref{min1ass2_thm}.

For any $\varepsilon >0$, let $n_{\varepsilon }$ be the smallest integer such that $\varepsilon /3>\mathcal{P}_{D}(f_{n})-\mathcal{P}%
_{\varnothing }(f_{n})\geq 0$, $\forall n>n_{\varepsilon }$. If $A,T\subseteq D$, it results that
\begin{eqnarray*}
\left\vert \Delta \mathcal{P}_{A,T}(f_{n})\right\vert &\leq &\left[ \mathcal{%
P}_{A}(f_{n})-\mathcal{P}_{\varnothing }(f_{n})\right] \left\vert 1+\eta
_{1}\xi _{1}\right\vert +\left[ \mathcal{P}_{T}(f_{n})-\mathcal{P}%
_{\varnothing }(f_{n})\right] \\
&\leq &\left[ \mathcal{P}_{D}(f_{n})-\mathcal{P}_{\varnothing }(f_{n})\right]
\left\vert 1+\eta _{1}\xi _{1}\right\vert +\left[ \mathcal{P}_{D}(f_{n})-%
\mathcal{P}_{\varnothing }(f_{n})\right] \\
&<&\frac{\varepsilon \left\vert 1+\eta _{1}\xi _{1}\right\vert +\varepsilon 
}{3} \\
&\leq &\varepsilon ,
\end{eqnarray*}%
for any $n>n_{\varepsilon }$.
\end{proof}




In the presence of noise, the pseudosolution defined in \eqref{eqn:recon} is the empty set, as proved in following Theorem.
\begin{theorem}\label{adagnullo}
Let $\xi_1$ and $\xi_2$ be two random variables uniformly distributed in $[-1,1]$. Under the assumptions of \Cref{lem:min1}, it results that $A^{\dag}=\varnothing$ with probability one, for noisy data.
\end{theorem}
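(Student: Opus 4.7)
The plan is as follows. To prove $A^\dag=\varnothing$ almost surely, it suffices to exhibit, for every candidate $T\in\mathcal{S}(\Omega)$, some $f\in X_\diamond$ (possibly depending on the noise realization) violating the admissibility inequality defining $A^\dag$ in \eqref{eqn:ps_noisy}. The key tool is the sequence $\{f_n\}\subseteq X_\diamond$ furnished by Proposition \ref{lem:min1} via Proposition \ref{thm_existence_sequence}, which satisfies $\mathcal{P}_\varnothing(f_n)=1$ and $\Delta\mathcal{P}_{A,T}(f_n)\to 0$; the role of these $f_n$'s is to drive the signal term to zero, isolating the noise term, which will then be driven below zero with probability one via a Borel--Cantelli argument applied to the per-measurement noise realizations.

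First, I would upgrade the pointwise convergence $\Delta\mathcal{P}_{A,T}(f_n)\to 0$ to a bound uniform in $T$. Fix a domain $D$ well contained in $\Omega$ with $\overline{A}\subset D$, and restrict attention to $T\in\mathcal{S}(\Omega)$ with $T\subseteq D$. The Monotonicity Principle \eqref{eqn:mono2} yields the sandwich
\begin{equation*}
0\le\mathcal{P}_A(f_n)-\mathcal{P}_\varnothing(f_n),\ \mathcal{P}_T(f_n)-\mathcal{P}_\varnothing(f_n)\le\mathcal{P}_D(f_n)-\mathcal{P}_\varnothing(f_n),
\end{equation*}
hence
\begin{equation*}
|\Delta\mathcal{P}_{A,T}(f_n)|\le(2+\eta_1)\bigl[\mathcal{P}_D(f_n)-\mathcal{P}_\varnothing(f_n)\bigr],
\end{equation*}
which tends to zero uniformly in $T\subseteq D$ by \eqref{min1ass2_thm}.

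Next, I would model the noise consistently with measurement: the realizations $\bigl(\xi_1^{(n)},\xi_2^{(n)}\bigr)$ attached to distinct $f_n$'s are independent samples of the uniform distribution on $[-1,1]^2$. For every $c\in(0,\eta_1+\eta_2 L)$ the event $E_n=\bigl\{\eta_1\xi_1^{(n)}+\eta_2 L\xi_2^{(n)}<-c\bigr\}$ has strictly positive probability and the $E_n$'s are independent, so the second Borel--Cantelli lemma gives $\mathbb{P}(E_n\text{ i.o.})=1$. On this full-measure event, choose $n$ large enough that $E_n$ holds and $|\Delta\mathcal{P}_{A,T}(f_n)|<c/2$ (possible by the uniform estimate above, simultaneously for every $T\subseteq D$); since $\mathcal{P}_\varnothing(f_n)=1$, the admissibility condition in \eqref{eqn:ps_noisy} tested at $f_n$ would require
\begin{equation*}
\Delta\mathcal{P}_{A,T}(f_n)+\eta_1\xi_1^{(n)}+\eta_2 L\xi_2^{(n)}\ge 0,
\end{equation*}
whereas the left-hand side is strictly less than $-c/2$. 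Therefore no $T\subseteq D$ belongs to $A^\dag$. Exhausting $\Omega$ by an increasing family $D_k\Subset D_{k+1}\Subset\cdots$ with $\bigcup_k D_k=\Omega$, and intersecting the countably many full-measure events obtained from each $D_k$, extends the conclusion to every $T\in\mathcal{S}(\Omega)$ with $\overline{T}\subset\Omega$, yielding $A^\dag=\varnothing$ almost surely.

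The principal obstacle is aligning the two asymptotic mechanisms at the \emph{same} index $n$: the uniform decay of the signal part $\Delta\mathcal{P}_{A,T}(f_n)$ and a probabilistic lower bound on the noise magnitude. The monotonicity sandwich provides the required uniformity essentially for free, while the per-measurement independence of the noise is what elevates the conclusion from positive probability (a single draw of $\xi_1,\xi_2$) to probability one (infinitely many independent draws along $\{f_n\}$), a use of Borel--Cantelli that crucially relies on the interpretation of \eqref{eqn:noisemodel} in the repeated-measurement sense standard in practice.
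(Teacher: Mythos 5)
Your proof is correct and follows essentially the same route as the paper: both use the localizing sequence $\{f_n\}$ from Proposition~\ref{lem:min1} to drive $\Delta \mathcal{P}_{A,T}(f_n)$ below any threshold $\varepsilon < \eta_1+\eta_2 L$ and then conclude that the per-measurement noise term falls below $-\varepsilon$ for some large $n$ with probability one. You merely make explicit two points the paper leaves implicit, namely the independence of the noise realizations attached to distinct $f_n$ (your Borel--Cantelli step, which the paper compresses into ``the probability that the inequality holds for all $n>n_\varepsilon$ is zero'') and the uniformity in $T$ of the signal decay together with the exhaustion of $\Omega$ by countably many sets $D_k$.
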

\begin{proof}
From \eqref{eqn:ps_noisy} and \Cref{lem:min1}, for any $\varepsilon > 0$, it results that
\begin{equation*}
\begin{split}
A^{\dag } & \subseteq \bigcup \left\{ T:\Delta \mathcal{P}_{A,T}(f_n)+\mathcal{P}_{\varnothing }(f_n)\eta _{1}\xi _{1,n}+\xi _{2,n}\eta _{2}L\geq 0,\ \forall n\in\N \right\}\\
&\subseteq \bigcup \left\{ T:\Delta \mathcal{P}_{A,T}(f_n)+\eta _{1}\xi _{1,n}+\xi _{2,n}\eta _{2}L\geq 0,\ \forall n\geq n_\varepsilon \right\}\\
&\subseteq \bigcup \left\{ T:\varepsilon+\eta_{1}\xi _{1,n}+\xi _{2,n}\eta _{2}L\geq 0,\ \forall n\geq n_\varepsilon \right\},\\
\end{split}
\end{equation*}
where $\{ f_n \}_{n \in \N}$ and $n_{\varepsilon}$ are the sequence and the integer appearing the proof of \Cref{lem:min1}, respectively.

By choosing $0 < \varepsilon < \eta_1+\eta_2 L$, the random variable $\varepsilon+\eta_{1}\xi _{1,n}+\xi _{2,n}\eta _{2}L$ has a non vanishing probability to be negative. Therefore, given $T$, the probability that $\varepsilon+\eta_{1}\xi _{1,n}+\xi _{2,n}\eta _{2}L \ge 0$, for any $n > n_{\varepsilon}$, is zero. As a consequence, $A^{\dag}=\varnothing$ with probability one.
\end{proof}
\begin{remark} It is worth noting that the result of \Cref{adagnullo} holds also when the random variables $\xi_1$ and $\xi_2$ are not uniformly distributed. Indeed, \Cref{adagnullo} is valid as long as the probability density function of $\xi_1$ and $\xi_2$ are not zero in a neighborhood of the extremum $-1$.
\end{remark}

\subsection{Regularization}\label{sub_sec_regularization}
\Cref{adagnullo} shows the destructive impact of the noise on the monotonicity test, highlighting the need for a proper regularization of the reconstruction rule.

For the noise model of \eqref{eqn:noisemodel}, a regularized version of the monotonicity test \eqref{eqn:recrulenoise} is required. Specifically, ${A}^{\bm{\eta}}_{\bm{\eta}^{\ast }}$, the reconstruction obtained from noisy data and \emph{regularization parameters} $\bm{\eta}^{\ast }=(\eta_1^{\ast},\eta_2^{\ast})$, is defined as
\begin{equation}{A}^{\bm{\eta}}_{\bm{\eta}^{\ast }}=\bigcup \left\{ T:\frac{\mathcal{P}%
_{A}^{\bm{\eta}}(f)+\eta _{2}^{\ast }L}{1-\eta _{1}^{\ast }}-\mathcal{P}_{T}(f)\geq
0,\ \forall f\in X_\diamond\right\} .  \label{eq_01}
\end{equation}%
The set ${A}^{\bm{\eta}}_{\bm{\eta}^{\ast }}$ is a random set, because of the presence of the noisy data $\mathcal{P}%
_{A}^{\bm{\eta}}(f)$.

By plugging \eqref{eqn:noisemodel} into \eqref{eq_01}, it results that
\begin{equation}
\label{eqn:regsetstat}
{A}^{\bm{\eta}}_{\bm{\eta}^{\ast }}=\bigcup \left\{ T:\frac{\mathcal{P}_{A}(f)(1+\eta _{1}\xi _{1})+\xi _{2}\eta _{2}L+\eta _{2}^{\ast }L}{1-\eta
_{1}^{\ast }}-\mathcal{P}_{T}(f)\geq 0,\ \forall f\in X_\diamond\right\}.
\end{equation}
Starting from \eqref{eqn:regsetstat}, it is possible to define $A_{\bm{\eta}^{\ast }}^{\bm{\eta}}$ as
\begin{equation}\label{eqn:aee}
\mathbb A _{\bm{\eta}^{\ast }}^{\bm{\eta }}=\bigcup \left\{ T:\frac{\mathcal{%
P}_{A}(f)(1+\eta _{1})+\eta _{2}L+\eta _{2}^{\ast }L}{1-\eta _{1}^{\ast }}-%
\mathcal{P}_{T}(f)\geq 0,\ \forall f\in X_\diamond\right\} .     
\end{equation}
The set $\mathbb A_{\bm{\eta }^{\ast }}^{\bm{\eta }}$ is deterministic and depends on both the noise parameter $\bm{\eta}$ affecting the measurement, and the regularization parameters $\bm{\eta }^{\ast }$. The set $\mathbb A_{\bm{\eta}^{\ast }}^{\bm{\eta}}$ corresponds to ${A}^{\bm \eta}_{\bm{\eta}^{\ast}}$, when all realizations of the random variables $\xi _{1}$ and $\xi _{2}$ are equal to $1$.

In principle, the noise parameter $\bm{\eta}$ may be different from the regularization parameters $\bm{\eta^{\ast}}$.
However, when $\bm{\eta}=\bm{\eta}^{\ast}$ one achieves regularization. To this purpose, it is worth noting that
\begin{equation}
\mathbb A_{\mathbf{0}}^{\mathbf{0}}\subseteq A^{\bm \eta}_{\bm\eta}\subseteq \mathbb A_{\bm{\eta}}^{\bm{\eta}},  \label{eqn_09}
\end{equation}%
because%
\[
\mathcal{P}_{A}(f)\leq \frac{\mathcal{P}_{A}(f)(1+\eta _{1}\xi _{1})+\left(
1+\xi _{2}\right) \eta _{2}L}{1-\eta _{1}}\leq \frac{\mathcal{P}%
_{A}(f)(1+\eta _{1})+2\eta _{2}L}{1-\eta _{1}}. 
\]

\begin{remark}
Thanks to Proposition \ref{prop_limits} and the leftmost equality of (\ref%
{eqn_09}), it follows
\begin{align}
   & \mathbb A_{\mathbf{0}}^{\mathbf{0}}  = A^{\dag } = A_{\mathbf{0}}^{\mathbf{0}},\label{eqn:chain_reg}\\
    & A  \subseteq \mathbb A_{\mathbf{0}}^{\mathbf{0}} \subseteq A^{\ast },\\
    & A  \subseteq A_{\mathbf{0}}^{\mathbf{0}} \subseteq A^{\ast }. 
\end{align}
\end{remark}

\begin{remark}\label{rem:monot}
$\left\{\mathbb A_{\bm{\eta}}^{\bm{\eta}}\right\} _{\bm{\eta}}$ form a family of nonincreasing sets for $\eta _{1}$ decreasing $\left( 0<\eta_{1}<1\right) $\ and $\eta _{2}$ decreasing $\left( \eta _{2}>0\right) $.
\end{remark}

Let $\eta _{1,k}\downarrow 0^{+}$ and $\eta _{2,k}\downarrow 0^{+}$ be two monotonic sequences. Thanks to \Cref{rem:monot}, it results that $\left\{ \mathbb A_{\bm{\eta }_{k}}^{\bm{\eta }_{k}}\right\} _{k}$ is a sequence of monotonically non increasing sets and, therefore
\begin{equation}
\mathbb A_{\infty }=\lim_{k\rightarrow +\infty }\mathbb A_{\bm{\eta}_{k}}^{\mathbf{\bm\eta 
}_{k}}=\bigcap_{k=1}^{+\infty }\mathbb A_{\bm{\eta}_{k}}^{\bm{\eta}_{k}}.
\label{eqn:08}
\end{equation}

From (\ref{eqn_09}) and \eqref{eqn:chain_reg}, it results that%
\[
A^{\dag }\subseteq \mathbb A_{\infty } 
\]%
and, therefore, it remains to establish if the limiting set $\mathbb A_{\infty }$ is
larger than $A^{\dag }$ or not. To this purpose, the following
proposition holds.

\begin{proposition}
The closure of $A^{\dag }$ is equal to the closure of $\mathbb A_{\infty }$, i.e.%
\[
\overline{A^{\dag }}=\overline{\mathbb A_{\infty }}. 
\]
\end{proposition}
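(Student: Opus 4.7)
The plan is to prove the two inclusions $\overline{A^{\dag}} \subseteq \overline{\mathbb{A}_\infty}$ and $\overline{\mathbb{A}_\infty} \subseteq \overline{A^{\dag}}$ separately. The first is immediate: from \eqref{eqn_09} combined with \eqref{eqn:chain_reg} the paper has already noted $A^{\dag} = \mathbb{A}_\mathbf{0}^\mathbf{0} \subseteq \mathbb{A}_\infty$, and closure is monotone under set inclusion.

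For the nontrivial direction I would show $\mathbb{A}_\infty \subseteq \overline{A^{\dag}}$. Fix $x \in \mathbb{A}_\infty$. Since $x \in \mathbb{A}_{\bm{\eta}_k}^{\bm{\eta}_k}$ for every $k$, definition \eqref{eqn:aee} (with $\bm\eta^\ast=\bm\eta_k$) yields $T_k \in \mathcal{S}(\Omega)$ with $x \in T_k$ such that
\[
\mathcal{P}_{T_k}(f) \leq C_k(f) := \frac{\mathcal{P}_A(f)(1+\eta_{1,k}) + 2\eta_{2,k}L}{1-\eta_{1,k}}, \qquad \forall f \in X_\diamond.
\]
Each $T_k$ being open, it contains a ball $B_{r_k}(x)$ for some $r_k > 0$, and the Monotonicity Principle \eqref{eqn:mono2} (applied to the inclusion $B_{r_k}(x) \subseteq T_k$) propagates the bound $\mathcal{P}_{B_{r_k}(x)}(f) \leq C_k(f)$. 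The decisive observation is the pointwise monotone convergence $C_k(f) \downarrow \mathcal{P}_A(f)$ as $k \to \infty$, which follows from $\eta_{1,k},\eta_{2,k} \downarrow 0$.

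I would then split into two cases according to the behaviour of $\{r_k\}$. If $\limsup_k r_k =: r_* > 0$, then along a subsequence $\{k_j\}$ with $r_{k_j} \geq r_*/2$, the fixed ball $B := B_{r_*/2}(x)$ is contained in $B_{r_{k_j}}(x)$, so by monotonicity $\mathcal{P}_B(f) \leq C_{k_j}(f)$ for all $j$ and all $f$; letting $j \to \infty$ yields $\mathcal{P}_B(f) \leq \mathcal{P}_A(f)$, which places $B$ in the union defining $A^{\dag}$ in \eqref{eqn:recon}, so $x \in B \subseteq A^{\dag} \subseteq \overline{A^{\dag}}$.

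The hard part is the remaining case $r_k \to 0$, in which no fixed ball around $x$ is seen to satisfy the unperturbed test by the limiting argument above. I would handle it by contradiction: suppose $x \notin \overline{A^{\dag}}$ and pick $\delta > 0$ with $B_\delta(x) \cap A^{\dag} = \emptyset$; then for $k$ large, $B_{r_k}(x) \subseteq B_\delta(x)$ fails the unperturbed admissibility test while still satisfying $\mathcal{P}_{B_{r_k}(x)}(f) \leq C_k(f)$ for all $f$. The Converse of the Monotonicity Principle, proven in \Cref{lem:lem1}--\Cref{lem:lem4}, then supplies a sequence of localized boundary data $\{g_n\}$ along which $\mathcal{P}_{B_{r_k}(x)}(g_n) - \mathcal{P}_A(g_n) \to +\infty$ as $n \to \infty$, whereas the perturbed admissibility enforces the quantitative bound $\mathcal{P}_{B_{r_k}(x)}(g_n) - \mathcal{P}_A(g_n) \leq \frac{2\eta_{1,k}\mathcal{P}_A(g_n) + 2\eta_{2,k}L}{1-\eta_{1,k}}$. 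The main technical obstacle I anticipate is quantifying the growth of $\mathcal{P}_A(g_n)$ along the localized potentials of the Converse and showing that, once $k$ is fixed with $\eta_{1,k},\eta_{2,k}$ small enough, the divergence of the left-hand side must eventually overtake the noise-tolerated right-hand side, producing the contradiction and thereby $x \in \overline{A^{\dag}}$.
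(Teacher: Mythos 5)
Your easy inclusion $\overline{A^{\dag}}\subseteq\overline{\mathbb A_{\infty}}$ and your first case ($\limsup_k r_k>0$) are sound: extracting a test set $T_k\ni x$ from the union defining $\mathbb A_{\bm{\eta}_k}^{\bm{\eta}_k}$, shrinking to a ball, propagating the bound by monotonicity, and using $C_k(f)\downarrow \mathcal P_A(f)$ correctly places a fixed ball around $x$ inside $A^{\dag}$. The genuine gap is the case $r_k\to 0$, which you leave open, and the tool you propose for it cannot close it. The Converse of the Monotonicity Principle (\Cref{lem:lem1}--\Cref{lem:lem4}) produces the divergence $\mathcal P_{T}(g_n)-\mathcal P_A(g_n)\to+\infty$ only under the hypothesis $T\nsubseteq A^{\ast}$; it does \emph{not} follow from ``$B_{r_k}(x)$ fails the unperturbed test''. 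The contrapositive of the Converse only gives $\overline{\Lambda}_T\leqslant\overline{\Lambda}_A\Rightarrow T\subseteq A^{\ast}$, so a ball lying in a cavity $A^{\ast}\setminus A$ may fail the unperturbed test while no localized-potential divergence is available for it. And even where the divergence does hold, $\mathcal P_A(g_n)$ itself blows up along the localized potentials, so the noise-tolerated right-hand side $\bigl(2\eta_{1,k}\mathcal P_A(g_n)+2\eta_{2,k}L\bigr)/(1-\eta_{1,k})$ is not visibly dominated by the left-hand side; this is exactly the quantitative comparison you flag as missing, and it is the heart of the matter, not a technicality.

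The paper's proof avoids the point-and-shrinking-ball machinery altogether and needs neither localized potentials nor the Converse (the Converse enters only in \Cref{prop_limits}). It shows that the \emph{interior} of $\mathbb A_{\infty}\setminus A^{\dag}$ is empty: for an open $T$ contained in that interior, the single fixed set $T$ satisfies the perturbed inequality for \emph{every} $k$ (from $T\subset\mathbb A_{\infty}$ and \eqref{eqn:08}), and passing to the limit $k\to+\infty$ in that inequality for this fixed $T$ yields $\overline{\Lambda}_A-\overline{\Lambda}_T\geqslant 0$, contradicting $T\nsubseteq A^{\dag}$. The lesson is that the limit in $k$ must be taken along a \emph{fixed} test set witnessing membership in all the $\mathbb A_{\bm{\eta}_k}^{\bm{\eta}_k}$ simultaneously; your construction only produces $k$-dependent witnesses $B_{r_k}(x)$, and when these degenerate you are left without any object on which to pass to the limit. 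To repair your argument you would either have to show that the radii can always be chosen bounded away from zero, or switch to the paper's formulation in terms of the interior of the difference set.
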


\begin{proof}
The interior of $\mathbb A_{\infty }\backslash A^{\dag }$ is the empty set. Indeed, let $T$
an open set contained in the interior of $\mathbb A_{\infty }\backslash A^{\dag }$.
It results that%
\begin{eqnarray}
\frac{\overline{\Lambda }_{A}(1+\eta _{1,k})+2\eta _{2,k}L}{1-\eta _{1,k}}-%
\overline{\Lambda }_{T} &\geq &0,\ \forall k\in\N  \label{eqn:09} \\
\overline{\Lambda }_{A}-\overline{\Lambda }_{T} &\ngeq &0.  \label{eqn:10}
\end{eqnarray}%
The first relationship comes from $T\subset \mathbb A_{\infty }$ and (\ref{eqn:08}),
the second from $T\nsubseteq A^{\dag }$ and the definition of $A^{\dag }$.
Passing in (\ref{eqn:09}) to the limit for $k\rightarrow +\infty $, it follows $%
\overline{\Lambda }_{A}-\overline{\Lambda }_{T}\geq 0$, that contradicts (%
\ref{eqn:10}).
\end{proof}

\begin{remark}
$A^{\dag }$ and $\mathbb A_{\infty }$ may differ only for boundary points.
\end{remark}
Summing up, given the regularization algorithm of \eqref{eqn:recrulenoise} and the family $\{ \mathbb A_{\bm{\eta}}^{\bm{\eta}} \}_{\bm \eta}$ defined in \eqref{eqn:aee}, it results that
\begin{itemize}
\item[(i)] the reconstruction is stable, in the sense that 
        \begin{equation*}
            A^{\dag}\subseteq {A}^{\bm{\eta}}_{\bm{\eta}}\subseteq \mathbb A_{\bm{\eta}}^{\bm{\eta}},
        \end{equation*}
i.e. there exists a deterministic upper and lower bounds that do not depend on the specific realization of the noise;
\item[(ii)] the reconstruction obtained via \eqref{eqn:recrulenoise} is convergent when the noise level approaches zero, that is, given two  monotone sequences $\eta _{1,k}\downarrow 0^{+}$ and $\eta _{2,k}\downarrow 0^{+}$
        \begin{equation*}
\overline{\lim_{k\to+\infty} A_{{\bm{\eta}_k}}^{\bm{\eta}_k}}  = \overline{\Aps}.
        \end{equation*}
Specifically, the closure of the limiting set is given by the closure of the pseudosolution for noise free data.
\end{itemize}

\subsection{Choice of the regularization parameter}
In the previous section it has been proved that choosing $\bm \eta = \bm \eta^{\ast}$ provides regularization. In this section it is evaluated the impact of a wrong estimate of the
regularization parameter $\bm{\eta}^{\ast }$ in the reconstruction
formula \eqref{eq_01}.

It is assumed that the regularization parameter $\bm{\eta}^{\ast }$ is a wrong estimates for the actual noise parameter $\bm{\eta}$ appearing in \eqref{eqn:noisemodel}. 
Specifically, it can be easily proved that 
\begin{eqnarray}
{A}_{\bm{\eta}}^{\bm{\eta}} &\subseteq &{A}^{\bm{\eta}}_{\bm{\eta}^{\ast
}},\text{ for }1>\eta _{1}^{\ast }\geq \eta _{1}>0\text{ and }1>\eta
_{2}^{\ast }\geq \eta _{2}>0  \label{eqn:13} \\
{A}^{\bm{\eta}}_{\bm{\eta}^{\ast }} &\subseteq &\mathbb {A}^{\bm{\eta}}_{\bm{\eta}%
},\text{ for }1>\eta _{1}\geq \eta _{1}^{\ast }>0\text{ and }1>\eta _{2}\geq
\eta _{2}^{\ast }>0.  \label{eqn:14}
\end{eqnarray}%
Therefore, if both noise parameters are overestimated, the reconstruction is
larger than ${A}_{\bm{\eta}}^{\bm{\eta}}$ (see equation (\ref{eqn:13})). On
the contrary, if both noise parameters are underestimated, the
reconstruction is smaller than ${A}^{\bm{\eta}}_{\bm{\eta}}$ (see equation (%
\ref{eqn:14})).

Moreover, it turns out that%
\begin{eqnarray*}
\lim_{\eta _{1}^{\ast },\eta _{2}^{\ast }\rightarrow 0^{+}}{A}_{\bm{\eta^{\ast}}}^{\bm{\eta}} &=&\varnothing \\
\lim_{\eta _{1}^{\ast },\eta _{2}^{\ast }\rightarrow 1^{-}}{A}^{\bm{\eta}}_{%
\bm{\eta^{\ast}}} &=&\bigcup_{T\in \mathcal{S}\left( \Omega \right) }T.
\end{eqnarray*}%
The set $\bigcup_{T\in \mathcal{S}\left( \Omega \right) }T$ represents the
largest anomaly that can be represented via $\mathcal{S}\left( \Omega
\right) $.

\section{Conclusions}
\label{sec:conclusions}
This paper provides the theoretical foundation for a recently introduced imaging method based on the Monotonicity Principle, to retrieve nonlinear anomalies in a linear background.

The main contribution of this work consisted in proving that (i) the imaging method is stable and robust with respect to the noise, (ii) the reconstruction approaches monotonically to a well-defined limit $\Aps$, as the noise level approaches to zero, and that (iii) the limit $\Aps$ satisfies $A \subseteq \Aps \subseteq A^{\ast}$, where $A^{\ast}$ is the outer boundary, that is the set $A$ plus all its cavities that are not connected by arch to $\partial \Omega$.

Results (i) and (ii) come directly from the Monotonicity Principle, while results (iii) requires to prove the Converse of the Monotonicity Principle, a theoretical result.

The results are proved for three wide classes of constitutive relationships covering the vast majority of cases encountered in applications.

\appendix
\section{Sharp estimates for perfectly conducting inclusions}
\label{sec:appa}
In this Appendix, the proof of Lemmas \ref{lem:moninf} and \ref{lem:ga} is provided, essential in the development of Subsection \ref{ano_infty}. With a little abuse of notation, the term \emph{perfectly conducting inclusion} denotes the region $A\subset \Omega$ such that $\gamma|_A=+\infty$, although $\gamma$ is a generic material property and not necessarily an electrical conductivity.

Consider the nonlinear material property $\gamma_A$ defined as in~\eqref{eqn:sigmaa}. It is reminded that $u_A$ is the solution of problem~\eqref{P1} with the material property $\gamma_A$ and the boundary data $f\in X_{\diamond}$.
For a perfect electrically conducting (PEC) inclusions $A\subset\Omega$, the material property $\gamma_A^{\infty}$ is defined in~\eqref{eqn:sigmainf} and the related scalar potential $u_A^{\infty}$ is the solution of~\eqref{u_infty}.

\begin{proof}[Proof of Lemma \ref{lem:moninf}]
$u_A\in H^1(\Omega)$ is the unique minimizer of \eqref{eqn:Ea}. As a consequence, 
\begin{equation*}
\begin{split}
    \langle \overline{\Lambda}_A(f),f \rangle&=
    \int_{\Omega}\int_0^{\abs{\nabla u_A(x)}}\gamma_A(x,\eta)\eta\,d\eta\,dx \\
    &\leq \int_{\Omega}\int_0^{\abs{\nabla u_A^{\infty}(x)}}\gamma_A(x,\eta)\eta\,d\eta\,dx\\
    &=\int_{\Omega\setminus A}\int_0^{\abs{\nabla u_A^{\infty}(x)}}\gamma_{BG}(x)\eta\,d\eta\,dx = \langle \overline{\Lambda}_A^{\infty}(f),f \rangle\qquad\forall f \in X_\diamond,
\end{split}
\end{equation*}
where it has been exploited that $|\nabla u_A^{\infty}|=0$ in $A$, the region where the material property is infinite.
\end{proof}

\begin{proof}[Proof of Lemma~\ref{lem:ga}]
The variational problem
\begin{equation}
    \label{minim_w}
    \min_{\substack{
    u\in H^1(\Omega\setminus A) \\ u_{|\partial\Omega}=0\\ u_{|\partial A}=u_{BG}-\overline{u_{BG}}} }\int_{\Omega\setminus  A}\gamma_{BG}(x)|\nabla u(x)|^2dx,
\end{equation}
is considered, where $u_{BG}$ is the solution of problem \eqref{u_BG} and $\overline{u_{BG}}$ is the average of $u_{BG}$ on $A$. 

If $w\in H^1(\Omega\setminus A)$ is the minimizer of \eqref{minim_w}, then it is the solution of
\begin{displaymath}
\begin{cases}
    \nabla\cdot(\gamma_{BG}(x)\nabla w(x))=0  & \text{in $\Omega\setminus A$} \\
    w(x)=0                                 & \text{on $\partial\Omega$}\\
    w(x)=u_{BG}(x)-\overline{u}_{BG}             & \text{on $\partial A$}.
\end{cases}
\end{displaymath}

By the inverse trace inequality, it is known that there exists a constant $C_1>0$ and $g\in H^1(\Omega\setminus A)$ with ${\rm Tr} (g)= 0$ on $\partial\Omega$ and ${\rm Tr} (g)= u_{BG}-\overline{u}_{BG}$ on $\partial A$ such that $||\nabla g||_{L^2(\Omega\setminus A)}\leq C_1 \lVert u_{BG}-\overline{u}_{BG} \rVert_{H^{1/2}(\partial A)}$.

Therefore, 
\begin{equation*}
\begin{split}
\int_{\Omega\setminus A} \gamma_{BG}(x)|\nabla w(x)|^2 dx & \leq \int_{\Omega\setminus A} \gamma_{BG}(x)|\nabla g(x)|^2 dx\leq \underline\gamma \lVert g \rVert_{L^2(\Omega\setminus A)}^2\\
& \leq\underline\gamma C_1 \lVert u_{BG}-\overline{u}_{BG} \rVert_{H^{1/2}(\partial A)}^2\leq \underline\gamma C_1 C_2 \lVert u_{BG}-\overline{u}_{BG} \rVert_{H^{1}(A)}^2\\
& \leq \underline\gamma C_1 C_2  \lVert \nabla u_{BG} \rVert_{L^2(A)}^2.
\end{split}
\end{equation*}
where in the first inequality the minimality of \eqref{minim_w} is used, in the third inequality the upper bound for $\gamma_{BG}$ is used, in the fourth inequality the inverse trace inequality on $\Omega\setminus A$ is used, in the fifth inequality, the classical trace inequality with constant $C_2>0$ is used, in the sixth inequality the Poincar\'e-Wirtinger inequality with constant $C_3>0$ is used.

Hence by setting $K=\underline\gamma C_1 C_2$, it holds
\begin{equation}\label{eqn:inpoi}
\int_{\Omega\setminus A} \gamma_{BG}(x)\abs{\nabla w(x)}^2\,dx\leq 
 K\lVert \nabla u_{BG} \rVert_{L^2(A)}^2.
\end{equation}

At this stage, it is observed that $w=u_{BG}-v$, where $v\in H^1(\Omega\setminus A)$ is the solution of
\begin{displaymath}
\begin{cases}
\nabla\cdot(\gamma_{BG}(x)\nabla v(x))=0  & \text{in $\Omega\setminus A$}\\
v(x)=f(x) & \text{on $\partial\Omega$}\\
v(x)=\overline{u}_{BG} & \text{on $\partial A$}.
\end{cases}
\end{displaymath}
It follows that
\begin{equation}\label{eqn:maj}
\begin{split}
    \int_{\Omega\setminus A}\gamma_{BG}(x)\abs{\nabla w(x)}^2\,dx&=\int_{\Omega\setminus A}\gamma_{BG}(x)\abs{\nabla u_{BG}(x)}^2\,dx\\
    &\quad+\int_{\Omega\setminus A}\gamma_{BG}(x)\abs{\nabla v(x)}^2\,dx\\
    &\quad-2\int_{\Omega\setminus A}\gamma_{BG}(x) \nabla u_{BG}(x)\cdot \nabla v(x)\,dx\\
    &=\int_{\Omega\setminus A}\gamma_{BG}(x)\abs{\nabla u_{BG}(x)}^2\,dx\\
    &\quad+\int_{\Omega\setminus A}\gamma_{BG}(x)\abs{\nabla v(x)}^2\,dx\\
    &\quad+2\int_{\Omega\setminus A}\gamma_{BG}(x) \nabla u_{BG}(x)\cdot \nabla w(x)\,dx.
\end{split}
\end{equation}
Furthermore, using the divergence Theorem, it follows
\begin{equation}
\label{div_ineq}
\begin{split}
    \int_{\Omega\setminus A}\gamma_{BG}(x)\nabla w(x)\cdot\nabla u_{BG}(x)\,dx&= 
    \int_{\partial\Omega}w(x) \gamma_{BG}(x)\partial_\nu u_{BG}(x)\,dx\\
    &\quad+\int_{\partial A}w(x) \gamma_{BG}(x)\partial_\nu u_{BG}(x)\,dx,
\end{split}
\end{equation}
where $\partial_\nu$ is the normal derivative along the outer direction w.r.t. $\Omega \setminus A$.

On the other hand, since $v$ is a constant on $\partial A$, $w=0$ on $\partial\Omega$, $A$ is well contained in $\Omega$ and 
\begin{displaymath}
    \int_{\partial A}\gamma_{BG}(x)\partial_\nu u_{BG}(x)=0,
\end{displaymath}
then \eqref{div_ineq} becomes
\begin{equation}\label{eqn:eqint}
\begin{split}
\int_{\Omega\setminus A}\gamma_{BG}(x)\nabla w(x)\cdot\nabla u_{BG}(x)\,dx&=    \int_{\partial A}w(x) \gamma_{BG}(x)\partial_\nu u_{BG}(x)\,dx\\
&=\int_{\partial A}u_{BG}(x) \gamma_{BG}(x)\partial_\nu u_{BG}(x)\,dx\\
&=-\int_A \gamma_{BG}(x) \abs{\nabla u_{BG}(x)}^2\, dx.
\end{split}
\end{equation}

Combining~\eqref{eqn:inpoi},~\eqref{eqn:maj} and~\eqref{eqn:eqint}, the following inequality is obtained
\begin{equation}
\label{K+1}
\int_{\Omega\setminus A}\gamma_{BG}(x)\abs{\nabla v(x)}^2\,dx-\int_{\Omega\setminus A}\gamma_{BG}(x)\abs{\nabla u_{BG}(x)}^2\,dx\leq  K\lVert \nabla u_{BG} \rVert_{L^2(A)}^2.
\end{equation}
Since $u_A^\infty$ is solution of \eqref{u_infty},  it is also the unique minimizer of
\begin{displaymath}
 \min_{\substack{ u\in H^1(\Omega) \\ \nabla u=0 \text{ in }A \\  u|_{\partial\Omega}=f}}  \  \int_{\Omega\setminus A}\gamma_{BG}(x)\abs{\nabla u(x)}^2\,dx,
\end{displaymath}
then it results that
\begin{equation}
    \label{minim_ineq}
    \int_{\Omega\setminus A}\gamma_{BG}(x)\abs{\nabla v(x)}^2\,dx\geq \int_{\Omega\setminus A}\gamma_{BG}(x)\abs{\nabla u_A(x)}^2\,dx.
\end{equation}
Therefore, by combining \eqref{K+1} and \eqref{minim_ineq}, it turns out that 
\begin{equation*}
\begin{split}
(K_1+1)\int_{A}\gamma_{BG}(x)\abs{\nabla u_{BG}(x)}^2\,dx&\geq
    \int_{\Omega\setminus A}\gamma_{BG}(x)\abs{\nabla u_A(x)}^2\,dx \\
    &\quad - \int_{\Omega}\gamma_{BG}(x)\abs{\nabla u_{BG}(x)}^2\,dx\\
    &=\langle \Lambda_A^{\infty}(f)-\Lambda_{BG}(f),f\rangle,
\end{split}
\end{equation*}
and the conclusion follows by the fact that $\gamma_{BG}\in L^\infty_+(\Omega)$.
\end{proof}

\section{Sharp estimates for perfectly insulating inclusions}
\label{sec:appb}
In this Appendix, the proof of Lemmas \ref{lem:monnul} and \ref{lem:locpei} are provided, essential in the development of Subsection \ref{ano_zero}. Analogously to \Cref{sec:appa}, \emph{perfectly insulating inclusions} indicate the region $A\subset\Omega$ for which $\gamma|_A=0$, although $\gamma$ can be a magnetic permeability or a dielectric permittivity as well as an electrical conductivity.

Consider a perfect electrical insulting (PEI) inclusion $A\subset \Omega$, with material property given by $\gamma_A^0$ defined as in~\eqref{eqn:sigma0},
and $u_A^0$ being the solution of~\eqref{u_zero}.

\begin{proof}[Proof of Lemma \ref{lem:monnul}]
The solution $u_A^0$ of \eqref{u_zero} is the unique minimizer of
\begin{equation}
    \label{minim_prob_zero}
\min_{\substack{u\in H^1(\Omega\setminus A) \\ u|_{\partial\Omega}=f }}  \int_{\Omega\setminus A}\int_0^{\abs{\nabla u(x)}}\gamma_{BG}(x)\eta\,d\eta\,dx.
\end{equation}
Furthermore, it is observed that
\begin{equation*}
\begin{split}
    \langle \overline{\Lambda}_A(f),f \rangle&=
    \int_{\Omega}\int_0^{\abs{\nabla u_A(x)}}\gamma_A(x,\eta)\eta \,d\eta \,dx \\
    &\geq \int_{\Omega\setminus A}\int_0^{\abs{\nabla u_A(x)}}\gamma_A(x,\eta)\eta\,d\eta\,dx\\
    &=\int_{\Omega\setminus A}\int_0^{\abs{\nabla u_A(x)}}\gamma_{BG}(x)\eta\,d\eta\,dx \\
    &\ge \int_{\Omega\setminus A}\int_0^{\abs{\nabla u_A^0(x)}}\gamma_{BG}(x)\eta\,d\eta\,dx \\
    &= \langle \overline{\Lambda}_A^0(f),f \rangle,
\end{split}
\end{equation*}
where in the first line the definition of the average DtN operator $\overline{\Lambda}_A$ is used, in the second line the fact the integral restricts on a smaller domain is used, in the third line the definition of $\gamma_A^0$ as in~\eqref{eqn:sigma0} is used, in the fourth line the minimality of problem \eqref{minim_prob_zero} is used and in the fifth line the definition of the average DtN operator $\overline{\Lambda}_A^0$ is used.
\end{proof}

\begin{proof}[Proof of Lemma~\ref{lem:locpei}]
This proof is an adaptation of that of~\cite[Lemma~5.3]{art:Ga20}.

Let $w_n=v_n|_{\Omega\setminus S_1}-u_n|_{\Omega\setminus S_1}\in H^1(\Omega\setminus S_1)$ be the difference (in $\Omega\setminus S_1$) between the solutions in the absence and in the presence of a perfect insulating anomaly in $S_1$, respectively, i.e. $w_n$ is the solution of
\begin{equation*}
    \begin{cases}
\nabla\cdot\left(\gamma_{BG}(x)\nabla w_n(x) \right)=0 & \text{in $\Omega\setminus S_1$}\\
 w_n(x)=0 & \text{on $\partial\Omega$}\\
\gamma_{BG}(x)\partial_\nu w_n(x)=\gamma_{BG}(x)\partial_\nu v_n(x) & \text{on $\partial S_1$},
    \end{cases}
\end{equation*}
where $v_n$ solves
\begin{equation*}
    \begin{cases}
\nabla\cdot(\gamma_{BG}(x)\nabla v_n(x))=0 & \text{in $\Omega$} \\
v_n(x)=f_n(x) & \text{on $\partial\Omega$}
    \end{cases}
\end{equation*}
and $u_n$, restricted to $\Omega\setminus S_1$, solves problem \eqref{u_zero*}.

Moreover, $w_n$ solves the following variational problem\begin{equation}    \label{minim_w_n}\min_{\substack{u\in H^1(\Omega\setminus S_1) \\ u_{|\partial\Omega}=0\\ \gamma_{BG}\partial_\nu u_{|\partial S_1}=\gamma_{BG}\partial_\nu {v_n}_{|\partial S_1}} }\int_{\Omega\setminus  S_1}\gamma_{BG}(x)|\nabla u(x)|^2dx.\end{equation}

Furthermore, recalling that $S_2\subset \subset \Omega\setminus S_1^*$ and $S_1^*$ is the complement in $\overline{\Omega}$ of the union of those relatively open set $V$ contained in $\Omega \setminus\overline S_1$ and connected to $\partial \Omega$, then it immediately follows that there exists a relatively open set $U\subset\overline{\Omega}$ intersecting the boundary $\partial\Omega$ and such that $S_1^*\subset \Omega\setminus\overline{U}$ and $S_2 \subset U$.

It follows
\begin{equation*}
\begin{split}
\overline\gamma\norm{w}_{H^{1/2}(\partial S_1)}&\norm{\nabla w_n}_{L^2(\Omega\setminus S_1)}\leq C_1\overline\gamma \norm{w}_{H^{1}(\Omega\setminus S_1)}\norm{\nabla w_n}_{L^2(\Omega\setminus S_1)}\\
&\le C_1 C_2 \overline\gamma\norm{\nabla w_n}_{L^2(\Omega\setminus S_1)}^2\\
&\le C_1C_2 \int_{\Omega\setminus  S_1}\gamma_{BG}(x)|\nabla w_n(x)|^2dx\\
&=C_1C_2 \langle w_n,\gamma_{BG}\partial_{\nu} w_n\rangle_{\partial(\Omega\setminus S_1)}\\
&=C_1C_2 \langle w_n,\gamma_{BG}\partial_{\nu} w_n\rangle_{\partial  S_1}\\
&\leq C_1C_2 \norm{w_n}_{H^{1/2}(\partial S_1)}\lVert \gamma_{BG}\partial_\nu v_n \rVert_{H^{-1/2}(\partial S_1)}\\
&\leq C_1 C_2 \norm{w_n}_{H^{1/2}(\partial S_1)}\lVert \gamma_{BG}\partial_\nu v_n \rVert_{H^{-1/2}(\partial S_1 \cup (\partial\Omega\setminus \partial U))}\\
&\leq C_1C_2\norm{w_n}_{H^{1/2}(\partial S_1)}(\lVert \gamma_{BG}\nabla v_n\rVert_{L^2(\Omega\setminus(S_1 \cup U))}+\lVert \nabla\cdot(\gamma_{BG}\nabla v_n) \rVert_{L^2(\Omega\setminus(S_1 \cup U))})\\
&=C_1C_2 \norm{w_n}_{H^{1/2}(\partial S_1)}\lVert \gamma_{BG}\nabla v_n \rVert_{L^2(\Omega\setminus(S_1 \cup U))},
\end{split}
\end{equation*}
where in the first line the trace inequality with constant $C_1$ is used,
in the second line the generalized Poincar\'e inequality with constant $C_2$ is used,
in the third line the lower bound for $\gamma_{BG}$ is used,
in the fourth line the divergence theorem is used, where $\langle \cdot,\cdot\rangle$ is the duality pairing $H^{1/2}(\partial(\Omega\setminus S_1)\times H^{-1/2}(\partial(\Omega\setminus S_1)$,
in the fifth line it is recognized that $\gamma_{BG}\partial_n v_n=\gamma_{BG}\partial_n w_n$ on $\partial S_1$, 
in the sixth line the definition of operatorial norm in $H^{-1/2}(\partial S_1)$ is used,
in the seventh line it is exploited the fact that the integral increases on bigger sets, 
in the eighth line the fact that the trace of the normal component is a bounded map from $H_{div}(\Omega\setminus(S_1\cup U))$ to $H^{-1/2}(\partial S_1 \cup (\partial\Omega\setminus \partial U))$ is used and, 
in the nineth line the fact that $\nabla\cdot(\gamma_{BG}\nabla v_n)=0$ in $\Omega\setminus(S_1 \cup U)$ is used. Refer to \Cref{fig:ta9} for the geometric details.

\begin{figure}[htp]
\centering
\includegraphics[width=.4\textwidth]{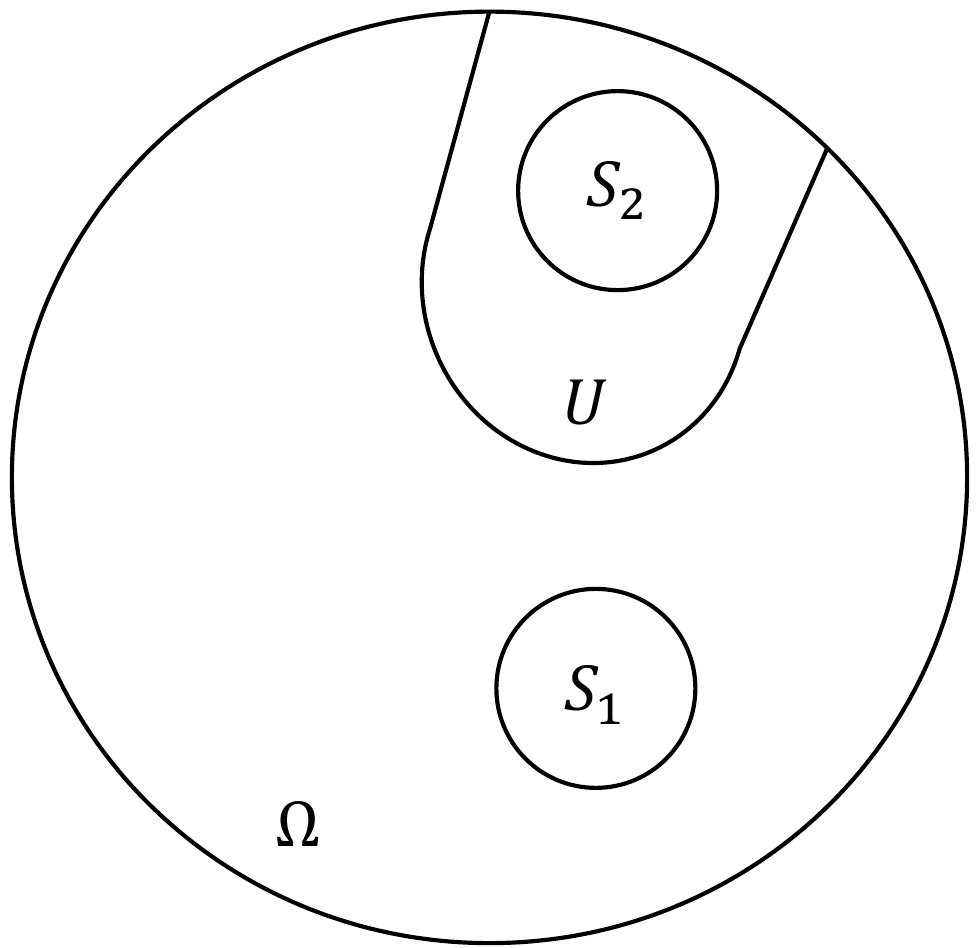}
\caption{Geometric relationships between sets $\Omega$, $U$, $S_1$ and $S_2$.}
\label{fig:ta9}
\end{figure}
Hence, by setting $K:=\frac{C_1C_2}{\overline{\gamma}}$, it results
\begin{equation}
    \label{eqn:abin}
\norm{\nabla w_n}_{L^2(\Omega\setminus S_1)}\leq K \lVert \gamma_{BG}\nabla v_n \rVert_{L^2(\Omega\setminus(S_1 \cup U))}.
\end{equation}
By \eqref{variante_loc}, there exists a sequence of boundary potentials $\{f_n\}_{n\in\mathbb{N}}\subset X_{\diamond}$ such that the solutions $\{v_n\}_{n\in\mathbb{N}}$ of
\begin{equation*}
    \begin{cases}
        \nabla\cdot\left(\gamma_{BG}(x) \nabla v_n(x)\right)=0 & \text{in $\Omega$}, \\
        v_n(x)=f_n(x) & \text{on $\partial\Omega$},
    \end{cases}
\end{equation*}
fulfill
\begin{equation}\label{eqn:locdim}
\lim_{n\to+\infty}\int_{\Omega\setminus U}\abs{\nabla v_n(x)}^2\,dx=0\quad\text{and}\quad\lim_{n\to+\infty}\int_{S_2}\abs{\nabla v_n(x)}^2\,dx=+\infty.
\end{equation}
From \eqref{eqn:abin} and~\eqref{eqn:locdim}, it follows that $||\nabla w_n||_{L^2(\Omega\setminus S_1)}$ converges to zero when the localized potentials are applied on the boundary. Furthermore, recalling that $u_n=v_n|_{\Omega\setminus S_1}-w_n$, it follows that the limiting behaviour of $\nabla v_n$ coincides with the limiting behaviour of $\nabla u_n$ on $\Omega\setminus S_1$. So
\begin{equation}\label{senza_S1}
\lim_{n\to+\infty}\int_{\Omega\setminus (S_1 \cup U)}\abs{\nabla u_n(x)}^2\,dx=0\quad\text{and}\quad\lim_{n\to+\infty}\int_{S_2}\abs{\nabla u_n(x)}^2\,dx=+\infty.
\end{equation}
In order to conclude, it remains to investigate the asymptotic behavior on $S_1$. Since $u_n$ tends to zero in $H^1(\Omega\setminus(S_1\cup U))$, by applying the trace theorem, it follows that $\norm{u_n}_{H^{1/2}(\partial S_1)}$ tends to 0. Since $u_n$ in $S_1$ solves \eqref{u_zero*_cascata}, then it solves
\begin{equation}
    \label{minim_u_n}
\min_{\substack{u\in H^1(\Omega\setminus S_1) \\ u_{|\partial S_1}=g_n}}\int_{\Omega\setminus  S_1}|\nabla u(x)|^2dx,
\end{equation}
being $g_n$ the restriction to $\partial S_1$ of the solution $u_n$ of problem \eqref{u_zero*}.

By the inverse trace inequality, there exists a constant $C>0$ and $h\in H^1( S_1)$ with ${\rm Tr} (h)= g_n$ on $\partial S_1$ such that $||\nabla h||_{L^2(S_1)}\leq C_1 \lVert u_n \rVert_{H^{1/2}(\partial S_1)}$. Therefore, by using the minimality of \eqref{minim_u_n}, it follows that
\begin{displaymath}
\int_{S_1} \gamma_{BG}(x)\abs{\nabla u_n(x)}^2\,dx\leq C_\infty
   \int_{S_1} \abs{\nabla h(x)}^2\,dx\leq C_\infty C_1 \lVert u_n \rVert_{H^{1/2}(\partial S_1)}.
\end{displaymath}
Since the last term tends to zero, it results that
\begin{equation}
\label{solo_S1}
\lim_{n\to+\infty}\int_{S_1} \gamma_{BG}(x)\abs{\nabla u_n(x)}^2\,dx=0.
\end{equation}
The conclusion follows by joining \eqref{senza_S1} and \eqref{solo_S1}.
\end{proof}

\section{Localized boundary potential}
\label{Loc_bou_pot_appendix}
In this appendix, the existence of depleting potentials along a particular sequence of boundary data is proven by the use of attenuating exponential solutions of the Laplace equation. This proof relies on the geometric circumstance that it is possible to find a hyperplane through a point of the boundary such that there exists a positive measure region close to the boundary. This assumption is valid and the proof is given in \Cref{existence_H_thm}. 

This appendix is divided into two parts. In the first one, the existence of the hyperplane mentioned above is proved with geometric and analytical methods; in the second part, it is proved a key inequality, required to establish the main result stated in \Cref{thm_existence_sequence}.

\subsection{The hyperplane existence}
For the sake of convenience, here the definition of convex hull is recalled.
\begin{definition}
    Let $E\subseteq\R^N$, the convex hull of $E$ is the smallest convex set containing $E$, i.e.
    \[
    \textrm{co}(E):=\left\{\sum_{h=1}^k\lambda_hx_h \ : \ x_h\in E,\ x_h\geq 0\  \forall\  h=1,...,k,\ \sum_{h=1}^k\lambda_h=1\right\}.
    \]
\end{definition}
To pursue our aim, it is necessary to prove that the convex hull is monotone with respect to the inclusion.
\begin{lemma}
Let $\Omega$, $D$ open bounded domains with Lipschitz boundary. It holds that $\overline{\textrm{co(D)}}=\textrm{co}(\overline{D})\subset \textrm{co}(\mathring{\Omega})=\mathring{\overline{\textrm{co}(\Omega)}}$, that means $\textrm{co}(D)\Subset \textrm{co}(\Omega)$.
\end{lemma}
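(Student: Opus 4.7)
The plan is to decompose the chain of equalities and the strict inclusion into three independent claims and treat each separately, using the standing assumption $D\Subset\Omega$ (which makes $\overline{D}\subset\mathring{\Omega}=\Omega$).

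First I would establish $\overline{\textrm{co}(D)}=\textrm{co}(\overline{D})$. The inclusion $\overline{\textrm{co}(D)}\subseteq\textrm{co}(\overline{D})$ reduces to showing that $\textrm{co}(\overline{D})$ is closed: since $\overline{D}$ is compact, Carathéodory's theorem represents any point of $\textrm{co}(\overline{D})$ as a convex combination of at most $N+1$ points of $\overline{D}$, so $\textrm{co}(\overline{D})$ is the continuous image of the compact set $\Delta_N\times\overline{D}^{N+1}$ (with $\Delta_N$ the standard $N$-simplex), hence compact. The reverse inclusion is immediate by approximating each $x_i\in\overline{D}$ by a sequence in $D$ and passing to the limit in the finite convex combination.

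Next I would prove $\textrm{co}(\mathring{\Omega})=\mathring{\overline{\textrm{co}(\Omega)}}$. Since $\Omega$ is open, any $x=\sum_i\lambda_i x_i$ in $\textrm{co}(\Omega)$ (with $\lambda_i>0$ and $x_i\in\Omega$) admits open balls $B(x_i,r_i)\subset\Omega$, and the Minkowski combination $\sum_i\lambda_i B(x_i,r_i)$ is precisely the ball $B(x,\sum_i\lambda_i r_i)\subset\textrm{co}(\Omega)$. Thus $\textrm{co}(\Omega)$ is open and convex, and a classical fact of convex analysis (consequence of the supporting hyperplane theorem) yields $\textrm{co}(\Omega)=\mathring{\overline{\textrm{co}(\Omega)}}$.

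Finally, for the middle inclusion, monotonicity of $\textrm{co}$ applied to $\overline{D}\subset\Omega$ gives $\textrm{co}(\overline{D})\subseteq\textrm{co}(\Omega)$; since the left side is compact (by Step 1) and the right side is open (by Step 2), we obtain $\textrm{co}(\overline{D})\Subset\textrm{co}(\Omega)$, which is the stated compact containment $\textrm{co}(D)\Subset\textrm{co}(\Omega)$. The main obstacle is the second equality: one must know (or quickly reprove) that every open convex set in $\R^N$ coincides with the interior of its closure. The first equality and the final inclusion are essentially bookkeeping once the compactness of the convex hull of a compact set is available.
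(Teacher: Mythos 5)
Your proof is correct, but it reaches the decisive middle inclusion by a genuinely different route than the paper. For $\textrm{co}(\overline{D})\subset\textrm{co}(\Omega)$ the paper argues quantitatively: writing $y=\sum_h\lambda_h x_h$ with $x_h\in\overline{D}$ and setting $\varepsilon=\dist(\overline{D},\R^N\setminus\Omega)>0$, it observes that $y+v=\sum_h\lambda_h(x_h+v)\in\textrm{co}(\Omega)$ for every $\norm{v}<\varepsilon$, so each point of $\textrm{co}(\overline{D})$ carries an $\varepsilon$-ball inside $\textrm{co}(\Omega)$; this yields the explicit estimate $\dist\left(\textrm{co}(\overline{D}),\R^N\setminus\textrm{co}(\Omega)\right)\geq\varepsilon$, which is exactly what the subsequent hyperplane-existence proposition invokes. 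Your ``compact contained in open'' argument delivers the qualitative conclusion $\textrm{co}(D)\Subset\textrm{co}(\Omega)$ claimed in the statement, but not that distance bound, so a reader following your proof would have to re-extract it downstream. On the other hand, you are more careful than the paper on the two equalities: the paper's blanket assertion that the convex hull of a closed set is closed is false for unbounded closed sets, and your Carath\'eodory argument (representing $\textrm{co}(\overline{D})$ as the continuous image of the compact set $\Delta_N\times\overline{D}^{N+1}$) is the correct repair, using precisely the boundedness of $D$; likewise you explicitly isolate the fact that an open convex set coincides with the interior of its closure, which the paper uses only implicitly. Both routes are valid: the paper's buys the quantitative constant needed later, yours buys rigor in the topological bookkeeping.
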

\begin{proof}
It is trivial to observe that if $E$ is an open (closed) set, then also $\textrm{co}(E)$ is an open (closed) set, whence the first and the last equality. 

    Now, set $\varepsilon:=\dist (\overline{D},\R^N\setminus\Omega)>0$ and consider $y\in co(\overline{D})$. By definition, there exist $x_1,...,x_k\in\overline{D}$ and $\lambda_1,...,\lambda_k\geq 0$ with $\sum_{h=1}^k\lambda_h=1$, such that
    \[
    y=\sum_{h=1}^k\lambda_hx_h.
    \]
    If $v\in\R^N$ such that $||v||<\varepsilon$ is considered, then $y+v=\sum_{h=1}^k\lambda_h(x_h+v)\in co(\Omega)$. It is observed that $B(x_h,||v||)\subseteq B(x_h,\varepsilon)\subset\Omega$, and hence this implies that $x_h+v\in\Omega$.

    This means that $B(y,\varepsilon)\subset \textrm{co}(\Omega)$, that means $\dist(\textrm{co}(\overline{D}), \R^N\setminus \textrm{co}(\Omega))\geq \varepsilon$.
\end{proof}
The second important key Lemma states that the boundary of the convex hull touches the boundary of the prescribed domain.
\begin{lemma} \label{lemma_touch}
Let $\Omega$ be an open bounded domain with Lipschitz boundary, it holds that
    \[
\partial(co(\Omega))\cap\partial\Omega\neq \emptyset.
    \]
\end{lemma}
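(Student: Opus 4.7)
The plan is to exhibit an explicit point in the intersection by a supporting-hyperplane argument. Since $\Omega$ is bounded, $\overline{\Omega}$ is compact, so I would fix any nonzero direction $v\in\mathbb{R}^N$ and set
\[
M:=\max_{x\in\overline{\Omega}}\langle v,x\rangle,
\]
the maximum being attained at some point $x_{0}\in\overline{\Omega}$. The candidate for a point in $\partial(\textrm{co}(\Omega))\cap\partial\Omega$ is precisely $x_0$.

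First I would check that $x_{0}\in\partial\Omega$. Since $\Omega$ is open, $\overline{\Omega}=\Omega\cup\partial\Omega$; if one had $x_{0}\in\Omega$, then by openness there would exist $\varepsilon>0$ with $x_{0}+\varepsilon v\in\Omega$, giving $\langle v,x_{0}+\varepsilon v\rangle=M+\varepsilon\lVert v\rVert^{2}>M$, which contradicts maximality. Hence $x_{0}\in\partial\Omega$.

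Next I would check that $x_{0}\in\partial(\textrm{co}(\Omega))$. Any $y\in\textrm{co}(\Omega)$ is a convex combination $y=\sum_{h=1}^{k}\lambda_{h}x_{h}$ of points $x_{h}\in\Omega$, hence
\[
\langle v,y\rangle=\sum_{h=1}^{k}\lambda_{h}\langle v,x_{h}\rangle\leq M,
\]
so the supremum of $\langle v,\cdot\rangle$ over $\textrm{co}(\Omega)$ is also $M$ and is achieved at $x_{0}$. If $x_{0}$ were in the interior of $\textrm{co}(\Omega)$, a small perturbation $x_{0}+\varepsilon v$ would still lie in $\textrm{co}(\Omega)$ and would strictly exceed $M$ in the direction $v$, a contradiction; therefore $x_{0}\in\partial(\textrm{co}(\Omega))$, concluding the proof.

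There is essentially no serious obstacle here: the argument is a one-line separation/supporting-hyperplane observation, and the only mild point to be careful about is ensuring that the maximum is realized on $\overline{\Omega}$ (which uses boundedness of $\Omega$, i.e.\ compactness of $\overline{\Omega}$) and that openness of $\Omega$ prevents interior maximizers.
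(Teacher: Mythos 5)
Your argument is correct, but it takes a genuinely different route from the paper. The paper proceeds by contradiction: it assumes $\partial(\textrm{co}(\Omega))\cap\partial\Omega=\emptyset$, deduces that $\Omega$ and $\R^N\setminus\textrm{co}(\Omega)$ are at positive distance $\lambda$, and then contracts $\textrm{co}(\Omega)$ about an interior point $x_0$ by a factor $\mu<1$ with $(1-\mu)R<\lambda$ to produce a convex set $C_\mu$ strictly smaller than $\textrm{co}(\Omega)$ yet still containing $\Omega$, contradicting the minimality of the convex hull. You instead give a direct supporting-hyperplane argument: maximize the linear functional $x\mapsto\langle v,x\rangle$ over the compact set $\overline{\Omega}$, and show the maximizer $x_0$ lies on both boundaries. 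Your proof is shorter, exhibits an explicit common boundary point, and avoids the dilation construction, whose verification that $C_\mu\supseteq\Omega$ is the least transparent step of the paper's proof; both arguments use only that $\Omega$ is open and bounded, with the Lipschitz hypothesis playing no role. The only step you leave implicit is that $x_0\in\overline{\textrm{co}(\Omega)}$, which is needed to upgrade ``$x_0$ is not interior to $\textrm{co}(\Omega)$'' to ``$x_0\in\partial(\textrm{co}(\Omega))$''; this is immediate from $x_0\in\overline{\Omega}$ and $\Omega\subseteq\textrm{co}(\Omega)$, so there is no gap.
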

\begin{proof}
Since $\textrm {co}(\Omega)\supseteq\Omega$, it is easily seen that $\partial(\textrm{co}(\Omega))\cap\Omega= \emptyset$. By contradiction, if the assert is not true, it turns out that
\[
\lambda=\dist (\Omega,\R^N\setminus \textrm{co}(\Omega))=\dist(\partial\Omega,\partial (\textrm{co}(\Omega)))>0.
\]
Consider $x_0\in\Omega\subset \textrm{co}(\Omega)$ and $R>0$ such that $co(\Omega)\subset B(x_0,R)$. Meanwhile, define
\[    C_\mu:=\{x_0+\mu (x-x_0)\ : \ x\in \textrm{co}(\Omega), \ (1-\mu)R<\lambda\},\]
where $\mu>1-\frac{\lambda}{R}$ is fixed. For any $x\in \textrm{co}(\Omega)$, the relation $x-x_0-\mu(x-x_0)=(x-x_0)(1-\mu)$ presents the left factor lower than $R$ in norm, while the right factor is lower than $\lambda/{R}$. Therefore, it is easily seen that $C_\mu$ is convex and $\dist(C_\mu,\R^N\setminus co(\Omega))\leq (1-\mu)R<\lambda$, hence
 $C_\mu\supseteq\Omega$ that gives a contradiction.
\end{proof}

Thanks to the previous Lemmata, the existence of a suitable hyperplane can be proved.
\begin{proposition}\label{existence_H_thm}
    Let $\Omega$, $D$ be open bounded domains of $\R^N$ with Lipschitz boundary such that $\overline{D}\subset  \mathring{\Omega}=\Omega$. Then, there exists $x_0\in\partial\Omega$, a direction $\nu$, an hyperplane $\Pi=\{x\in\R^N \ : \ (x-x_0)\cdot\nu=0\}$ and a constant $\delta>0$ such that $D\subseteq\{x\in\R^N \ : \ (x-x_0)\cdot\nu\ge \delta\}$.
\end{proposition}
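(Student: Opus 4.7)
The plan is to leverage the two preceding lemmata (the monotonicity of the convex hull with respect to compact inclusion, and the fact that the boundary of the convex hull of $\Omega$ meets $\partial\Omega$), together with the classical supporting hyperplane theorem for convex bodies. The geometric picture is clear: since $D$ is well contained in $\Omega$, its convex hull is well contained in $\text{co}(\Omega)$; at a point where $\partial(\text{co}(\Omega))$ meets $\partial\Omega$ we can erect a supporting hyperplane to $\text{co}(\Omega)$, and the compactness of $\overline{\text{co}(D)}$ (together with the fact that it lies in the interior of $\text{co}(\Omega)$) will give a uniform separation.

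More precisely, first I would apply the monotonicity lemma to obtain $\overline{\text{co}(D)} = \text{co}(\overline{D}) \Subset \text{co}(\Omega)$, so in particular $\overline{\text{co}(D)} \cap \partial(\text{co}(\Omega)) = \emptyset$. Next, by Lemma~\ref{lemma_touch} applied to $\Omega$, there exists $x_0 \in \partial(\text{co}(\Omega)) \cap \partial\Omega$; this is the candidate point for the statement. Since $\text{co}(\Omega)$ is convex (and nonempty) and $x_0$ is a boundary point of $\text{co}(\Omega)$, the supporting hyperplane theorem yields a unit vector $\nu \in \R^N$ such that
\begin{equation*}
(x - x_0) \cdot \nu \geq 0 \qquad \forall\, x \in \text{co}(\Omega).
\end{equation*}
This defines the hyperplane $\Pi = \{x \in \R^N : (x-x_0)\cdot \nu = 0\}$ required by the statement.

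It remains to produce a strictly positive $\delta$. Here I set
\begin{equation*}
\delta := \min_{x \in \overline{\text{co}(D)}} (x - x_0) \cdot \nu,
\end{equation*}
which is attained by compactness of $\overline{\text{co}(D)}$ and continuity of the linear functional $x \mapsto (x-x_0) \cdot \nu$. The minimum is clearly nonnegative because $\overline{\text{co}(D)} \subseteq \text{co}(\Omega)$ and the supporting hyperplane separates $\text{co}(\Omega)$ from its complement. The key observation — and the only delicate step — is that $\delta$ cannot equal $0$: if some $x^{\ast} \in \overline{\text{co}(D)}$ satisfied $(x^{\ast} - x_0) \cdot \nu = 0$, then $x^{\ast}$ would belong simultaneously to $\overline{\text{co}(D)}$ and to $\Pi \cap \text{co}(\Omega)$; but $\Pi \cap \text{co}(\Omega) \subseteq \partial(\text{co}(\Omega))$ (points of $\text{co}(\Omega)$ on a supporting hyperplane lie on its boundary), contradicting $\overline{\text{co}(D)} \cap \partial(\text{co}(\Omega)) = \emptyset$.

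Finally, since $D \subseteq \overline{\text{co}(D)}$, we conclude $(x - x_0) \cdot \nu \geq \delta > 0$ for every $x \in D$, which is exactly the claim $D \subseteq \{x : (x-x_0)\cdot \nu \geq \delta\}$. The main obstacle, as indicated, is the strict positivity of $\delta$; everything else is essentially a packaging of the supporting hyperplane theorem with the two earlier lemmata.
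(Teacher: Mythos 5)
Your proof is correct, but it builds the hyperplane differently from the paper. You take a \emph{supporting hyperplane of} $\mathrm{co}(\Omega)$ at the contact point $x_0\in\partial\Omega\cap\partial(\mathrm{co}(\Omega))$ and then extract $\delta>0$ by minimizing the linear functional $x\mapsto(x-x_0)\cdot\nu$ over the compact set $\overline{\mathrm{co}(D)}$, with strict positivity following because a point of $\mathrm{co}(\Omega)$ lying on the supporting hyperplane would have to be a boundary point of $\mathrm{co}(\Omega)$, which is excluded by $\overline{\mathrm{co}(D)}\Subset\mathrm{co}(\Omega)$. The paper instead takes $\nu$ to be the direction from $x_0$ to its nearest-point projection $x_1$ onto $\overline{\mathrm{co}(D)}$ and uses the variational (obtuse-angle) characterization of the projection onto a convex set to get $(x-x_0)\cdot\nu\geq\lVert x_1-x_0\rVert$ for all $x\in\overline{\mathrm{co}(D)}$; combined with the distance estimate from the monotonicity lemma this yields the \emph{explicit} constant $\delta=\dist(D,\R^N\setminus\Omega)$. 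So the two hyperplanes need not coincide: yours supports the large convex hull, the paper's separates $x_0$ from the small one. What your route buys is a shorter argument that leans on a single classical theorem and avoids the projection computation; what it gives up is the quantitative value of $\delta$, which your compactness argument produces non-constructively. Both are complete proofs of the stated proposition; the only cosmetic quibble is your phrase that the supporting hyperplane \lq\lq separates $\mathrm{co}(\Omega)$ from its complement\rq\rq, which is imprecise (the relevant fact is simply the one-sided inequality $(x-x_0)\cdot\nu\ge 0$ on $\mathrm{co}(\Omega)$), but this does not affect the argument.
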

\begin{proof}
By \Cref{lemma_touch}, it results that
$    \delta=\dist(D,\R^N\setminus\Omega)\leq \dist(\textrm{co}(D),\partial \textrm{co}(\Omega))$; therefore, for any $x_0\in \partial\Omega\cap \partial (co(\Omega))$, it follows $\dist(x_0,\partial(\textrm{co(D)}))\geq \delta>0$. 

By setting $x_1\in\partial(co(D))$ the projection of $x_0$ on $\overline{\textrm{co}(D)}$, it results that
\[
\dist(x_0,x_1)=\min_{x\in\overline {co(D)}}\dist(x_0,x)>\delta
\]
and $\nu=\frac{x_1-x_0}{||x_1-x_0||}$.

Finally, the sought hyperplane is defined as
\[
\Pi=\{y\in\R^N \ : \ y\cdot \nu=x_0\cdot \nu\}.
\]
It is worth noting that $x_0\in\Pi$ and $(x-x_0)\cdot\nu\geq (x-x_0)\cdot\nu=||x_1-x_0||$.
\end{proof}

\subsection{The depleting potentials}

Before proving the main result, a preliminary Lemma is given for the treatment of inclusions with vanishing material property.

\begin{lemma}\label{in_ins_lemma}
Let $\Lambda_{BG}$ and $\Lambda_D^{0}$  be the DtN operators related to the material properties $\gamma_{BG}(x) \in L_+^\infty(\Omega)$ and 
    \begin{equation*}
        \gamma_D^0(x)=\begin{cases}
            0 & \text{in } D \\
            \gamma_{BG}(x) & \text{in } \Omega\setminus D.
        \end{cases}
    \end{equation*}
     Assuming $\gamma_{BG}(x)$ piecewise analytic and condition~\eqref{eqn:nass2}, there exists a positive constant $K$ such that
\begin{equation*}
0\leq\langle \Lambda_{BG}(f) - \Lambda_D^{0}(f), f\rangle\leq K\int_D\abs{\nabla u_{BG}(x)}^2\,dx
\qquad\forall f \in X_\diamond,
\end{equation*}
\end{lemma}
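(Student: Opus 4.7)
The plan is to mirror Lemma~\ref{lem:ga}, with the Dirichlet matching on $\partial D$ used there replaced here by the natural (homogeneous Neumann) condition $\gamma_{BG}\partial_\nu u_D^0 = 0$ on $\partial D$ satisfied by the perfectly insulating solution. The lower bound $\langle \Lambda_{BG}(f) - \Lambda_D^0(f), f\rangle \geq 0$ is immediate from the Monotonicity Principle for linear material properties, since $\gamma_D^0 \le \gamma_{BG}$ a.e.\ in $\Omega$.

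For the upper bound, I would set $w := u_{BG}|_{\Omega\setminus D} - u_D^0 \in H^1(\Omega\setminus D)$. Then $w = 0$ on $\partial\Omega$, $\nabla \cdot (\gamma_{BG}\nabla w) = 0$ in $\Omega\setminus D$, and $\gamma_{BG}\partial_\nu w = \gamma_{BG}\partial_\nu u_{BG}$ on $\partial D$. Expanding $|\nabla u_{BG}|^2 = |\nabla u_D^0 + \nabla w|^2$ in $\Omega\setminus D$, the cross-term $\int_{\Omega\setminus D} \gamma_{BG}\nabla u_D^0 \cdot \nabla w\,dx$ vanishes upon integration by parts because both boundary contributions drop out ($w|_{\partial\Omega}=0$ and $\gamma_{BG}\partial_\nu u_D^0|_{\partial D}=0$). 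Splitting $\int_\Omega = \int_D + \int_{\Omega\setminus D}$ then yields the key identity
\begin{equation*}
\langle \Lambda_{BG}(f) - \Lambda_D^0(f), f\rangle
= \int_D \gamma_{BG}|\nabla u_{BG}|^2\,dx + \int_{\Omega\setminus D} \gamma_{BG}|\nabla w|^2\,dx,
\end{equation*}
so it suffices to bound the second integral by a multiple of $\int_D |\nabla u_{BG}|^2\,dx$.

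A further integration by parts gives $\int_{\Omega\setminus D} \gamma_{BG}|\nabla w|^2\,dx = \langle w, \gamma_{BG}\partial_\nu u_{BG}\rangle_{\partial D}$, which I would estimate via $H^{1/2}(\partial D)$--$H^{-1/2}(\partial D)$ duality. The factor $\|w\|_{H^{1/2}(\partial D)}$ is controlled by trace plus Poincar\'e (exploiting $w=0$ on $\partial\Omega$) by $C_1 \bigl(\int_{\Omega\setminus D} \gamma_{BG}|\nabla w|^2\,dx\bigr)^{1/2}$; the factor $\|\gamma_{BG}\partial_\nu u_{BG}\|_{H^{-1/2}(\partial D)}$ is controlled by the normal-trace theorem for $H(\dive; D)$ applied to the solenoidal field $\gamma_{BG}\nabla u_{BG}$, yielding $C_2\|\nabla u_{BG}\|_{L^2(D)}$. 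Absorbing one factor of $\bigl(\int_{\Omega\setminus D} \gamma_{BG}|\nabla w|^2\,dx\bigr)^{1/2}$ to the left then closes the estimate. The main obstacle is exactly this $H^{-1/2}$ bound: unlike Lemma~\ref{lem:ga}, where a tangible Dirichlet trace on $\partial D$ was available and treated through the inverse trace inequality, here the relevant information lives on the normal flux and must be recovered from the vanishing divergence of $\gamma_{BG}\nabla u_{BG}$ inside $D$, with Lipschitz regularity of $\partial D$ ensuring that the constant $K$ depends only on $\gamma_{BG}$, $\Omega$ and $D$.
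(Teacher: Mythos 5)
Your proposal is correct and follows essentially the same route as the paper: both arguments introduce $w=u_{BG}|_{\Omega\setminus D}-u_D^0$, kill the cross term by integration by parts against the homogeneous Neumann condition on $\partial D$, reduce the upper bound to estimating $\int_{\Omega\setminus D}\gamma_{BG}\abs{\nabla w}^2\,dx$, and close that estimate via trace/Poincar\'e on $w$ together with the $H^{-1/2}(\partial D)$ bound on $\gamma_{BG}\partial_\nu u_{BG}$ coming from the normal-trace theorem for the divergence-free field in $D$ (what the paper calls the inverse trace inequality for Neumann data). The only difference is cosmetic: you state the key energy identity up front, whereas the paper first bounds $\norm{\nabla w}_{L^2(\Omega\setminus D)}$ and then derives the same identity.
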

\begin{proof}
Let $w$ be the solution of the following problem
\begin{equation*}
    \begin{cases}
        \nabla \cdot (\gamma_{BG}\nabla w)=0 & \text{in }\Omega\setminus D \\
        w=0 & \text{on }\partial\Omega\\
        \partial_{\nu}w=\partial_{\nu}u_{\varnothing} & \text{on }\partial D,
    \end{cases}
\end{equation*}
the following estimation holds (see also proof of Lemma 4.10)
\begin{equation*}
\begin{split}
\overline\gamma\norm{w}_{H^{1/2}(\partial S_1)}&\norm{\nabla w_n}_{L^2(\Omega\setminus S_1)}\leq C_1\overline\gamma \norm{w}_{H^{1}(\Omega\setminus S_1)}\norm{\nabla w_n}_{L^2(\Omega\setminus S_1)}\\
&\le C_1 C_2 \overline\gamma\norm{\nabla w_n}_{L^2(\Omega\setminus S_1)}^2\\
&\le C_1C_2 \int_{\Omega\setminus  S_1}\gamma_{BG}(x)|\nabla w_n(x)|^2dx\\
&=C_1C_2 \langle w_n,\gamma_{BG}\partial_{\nu} w_n\rangle_{\partial(\Omega\setminus S_1)}\\
&=C_1C_2 \langle w_n,\gamma_{BG}\partial_{\nu} w_n\rangle_{\partial  S_1}\\
&\leq C_1C_2 \norm{w_n}_{H^{1/2}(\partial S_1)}\lVert \gamma_{BG}\partial_\nu w_n \rVert_{H^{-1/2}(\partial S_1)},
\end{split}
\end{equation*}
i.e,
\begin{equation*}
    \norm{\nabla w}_{L^2(\Omega\setminus D)}\leq k_1 \lVert \gamma_{BG}\partial_\nu u_{\varnothing} \rVert_{H^{-1/2}(\partial D)}.
\end{equation*}
Furthermore, by the inverse trace inequality for Neumann data
\begin{equation*}
    \lVert \gamma_{BG}\partial_\nu u_{\varnothing} \rVert_{H^{-1/2}(\partial D)}\leq k_2 \norm{\nabla u_{\varnothing}}_{L^2(D)}.
\end{equation*}
Hence
\begin{equation}\label{eqn:init1}
    \norm{\nabla w}_{L^2(\Omega\setminus D)}\leq k_0 \norm{\nabla u_{\varnothing}}_{L^2(D)}.
\end{equation}
On the other hand, by observing that $w=u_{\varnothing}|_{\Omega\setminus D}-u_0$, where $u_0$ solves
\begin{equation*}
    \begin{cases}
        \nabla\cdot(\gamma_{BG}(x)\nabla u_0(x))=0 & \text{in} \Omega\setminus D \\
        u_0=f & \text{on } \partial\Omega\\
        \gamma_{BG}\partial_{\nu}u_0=0 & \text{on } \partial D,
    \end{cases}
\end{equation*}
it follows
\begin{equation}\label{eqn:init2}
\begin{split}
    k_3 \norm{\nabla w}^2_{L^2(\Omega\setminus D)}&\geq\int_{\Omega\setminus D} \gamma_{BG}(x)\abs{w(x)}^2\,dx\\
    &=\int_{\Omega\setminus D}\gamma_{BG}(x) \abs{u_{\varnothing}}^2\,dx+\int_{\Omega\setminus D}\gamma_{BG}(x) \abs{u_0}^2\,dx\\
    &\: -2\int_{\Omega\setminus D}\gamma_{BG}(x)\nabla u_{\varnothing}(x)\cdot \nabla u_0(x)\,dx \\
    &=\int_{\Omega\setminus D}\gamma_{BG}(x) \abs{\nabla u_{\varnothing}}^2\,dx-\int_{\Omega\setminus D}\gamma_{BG}(x) \abs{\nabla u_0}^2\,dx \\
    &=\langle (\Lambda_{\varnothing}-\Lambda_D^0)(f),f\rangle - \int_{D}\gamma_{BG}(x) \abs{\nabla u_{\varnothing}}^2\,dx
\end{split}
\end{equation}
where in the fourth line it is exploited the fact that
\begin{equation*}
    \int_{\Omega\setminus D}\gamma_{BG}(x) \abs{\nabla u_0}^2\,dx=\int_{\Omega\setminus D}\gamma_{BG}(x)\nabla u_{\varnothing}(x)\cdot \nabla u_0(x)\,dx.
\end{equation*}
Combining \eqref{eqn:init1} and \eqref{eqn:init2} the claim follows.
\end{proof}

\begin{proof}[Proof of \Cref{thm_existence_sequence}]
The following proof exploits some results from \cite{kohn1984determining,kohn1985determining}.
By \Cref{existence_H_thm}, it follows that there exists a point $x_{0}\in
\partial \Omega $, a hyperplane $H$ of dimension $n-1$ through $x_{0}$ and a
constant $\delta >0$ such $D\subseteq \left\{ x_{0}+\xi _{n}\mathbf{\hat{%
\imath}}_{n}\in 
\mathbb{R}
^{n}|\xi _{n}>\delta \right\} $, where $\mathbf{\hat{\imath}}_{1},\ldots ,%
\mathbf{\hat{\imath}}_{n}$ are the unit vectors of a reference system having
the origin in $x_{0}$ and oriented so that $\mathbf{\hat{\imath}}_{n}$ is
normal to $H$ and directed toward $D$. 

Since $\gamma_{BG}$ is piecewise analytic, it is possible to assume that, apart from a zero measure set, $x_0$ admits a neighborhood $U$ such that (i) $\gamma_{BG}\in C^{\infty}(U)$, and (ii) $D\subset\Omega\setminus\overline{U}$. From Lemmata 1, 2 and 3 of \cite{kohn1984determining}, there exists a sequence $\{f_n\}_n\subset C^{\infty}(\partial\Omega)$ and a constant $C>0$, such that $\norm{f_n}_{H^{1/2}(\partial\Omega)}=1$ and
\begin{equation}\label{eqn_kohn}
\int_{U}\abs{\mathbf{s}_{\emptyset}}^2\,dx\geq C  \quad \text{and} \quad \lim_{n\to+\infty}\int_{\Omega\setminus \overline{U}} \abs{\mathbf{s}_{\emptyset}}^2\,dx=0,
\end{equation}
where $\mathbf{s}_{\emptyset}=-\nabla u_{\emptyset}$ and $u_{\emptyset}$ solves the background problem, i.e. the original problem \eqref{P1} in the absence of anomalies ($A=\emptyset$) and for a Dirichlet data given by $f_n$.

Hence, for region $D$ it turns out that
\[
\mathbb G_{D}\left( f_{n}\right) 
:=\int_{D} \gamma _{BG}(x)\left\vert \mathbf{s}_{\varnothing }\right\vert^{2}\text{d}x
\leq \overline{\gamma}\int_{D}\left\vert \mathbf{s}_{\varnothing }\right\vert^{2}\text{d}x
\leq \overline{\gamma}\int_{\Omega\setminus\overline{U}}\left\vert \mathbf{s}_{\varnothing }\right\vert^{2}\text{d}x
\]%
while for region $\Omega$ it turns out that
\[
\mathbb G_{\Omega}\left( f_{n}\right) 
:=\int_{\Omega} \gamma _{BG}(x)\left\vert \mathbf{s}_{\varnothing }\right\vert^{2}\text{d}x
\geq \underline{\gamma}\int_{\Omega}\left\vert \mathbf{s}_{\varnothing }\right\vert^{2}\text{d}x
\geq \underline{\gamma}\int_{U}\left\vert \mathbf{s}_{\varnothing }\right\vert^{2}\text{d}x.
\]%
Therefore, by means of \eqref{eqn_kohn}, it results that%
\begin{equation}\label{eqn_limb}
   \lim_{n\rightarrow +\infty }\frac{{\mathbb G}_{D}\left( f_{n}\right) }{{\mathbb G}_{\Omega
}\left( f_{n}\right) }=0, 
\end{equation}
while condition (\ref{min1ass1_thm}) can be imposed by properly scaling the boundary data $f_n$.

In the remainder of this proof, it is proven that the difference $\mathcal{P}_{D}(f_n)-\mathcal{P}_{\varnothing}(f)$ is controlled by $P_{D}$, which together with \eqref{eqn_limb} gives \eqref{min1ass2_thm}. In order to do that, a different treatment is required for the various classes of nonlinearity treated in this work.

\proofpart{1}{$\gamma_{NL}>\gamma_{BG}$ and bounded.}
By replacing $\mathcal{P}_A(f)-\mathcal{P}_T(f)$ with $\mathcal{P}_D(f)-\mathcal{P}_{\varnothing}(f)$ in (4.11), it holds
\begin{equation*}
\begin{split}
0\leq\mathcal{P}_D(f)-\mathcal{P}_{\varnothing}(f) &\leq \int_{\Omega}\int_0^{{\left\vert \mathbf{s}_{\varnothing }\right\vert}}\left(\gamma_D(x,\eta)-\gamma_{BG}\right)\eta\,d\eta\,dx\\
&=\int_{D}\int_0^{{\left\vert \mathbf{s}_{\varnothing }\right\vert}}\left(\gamma_{NL}(x,\eta)-\gamma_{BG}\right)\eta\,d\eta\,dx \\
&\leq \int_{D}\int_0^{{\left\vert \mathbf{s}_{\varnothing }\right\vert }}\left(\overline{\gamma}-\gamma_{BG}\right)\eta\,d\eta\,dx \\
& \leq \alpha {\mathbb G}_D(f),
\end{split}   
\end{equation*}
where
\begin{equation*}
    \alpha=\frac{\overline{\gamma}-\gamma_{BG}^l}{\gamma_{BG}^l}
\end{equation*}
and $\gamma_{BG}^l$ is the lower bound for the background material property.

Hence,
\begin{equation*}
0\leq \lim_{{n\to+\infty}} \left[ \mathcal{P}_D(f_n)-\mathcal{P}_{\varnothing}(f_n) \right] \leq \alpha  \lim_{{n\to+\infty}}  {\mathbb G}_D(f_n)=0. 
\end{equation*}

\proofpart{2}{$\gamma_{NL}<\gamma_{BG}$ and bounded.}
Let $\gamma_D^I$ the material property defined as in expression (4.21) by replacing $A$ with $D$ and $\mathcal{P}_D^I(f)=\langle \overline{\Lambda}_D^I(f),f\rangle$. By the MP it follows $\mathcal{P}_D\geq\mathcal{P}_D^I$.
Furthermore, by replacing $\mathcal{P}_T$ with $\mathcal{P}_D^I$ in (4.37), it holds
\begin{equation*}
\begin{split}
    0\leq\mathcal{P}_{\varnothing}(f)-\mathcal{P}_D^I(f) &\leq\frac{1}{2}\int_D\frac{\gamma_{BG}(x)}{\underline{\gamma}}(\gamma_{BG}(x)-\underline{\gamma})\abs{\mathbf{s}%
_{\varnothing }}^2\,dx \\
    & \leq \alpha \mathbb G_D(f),
\end{split}  
\end{equation*}
where 
\begin{equation*}
    \alpha=\frac{1}{2}\frac{\gamma_{BG}^u-\underline{\gamma}}{\underline{\gamma}}>0
\end{equation*}
and $\gamma_{BG}^u$ upper bound to the background material property.

Hence,
\begin{equation*}
0\leq  \lim_{{n\to+\infty}} \left[ \mathcal{P}_{\varnothing}(f_n)-\mathcal{P}_D(f_n) \right] \leq \alpha  \lim_{{n\to+\infty}} \mathbb G_D(f_n)=0. 
\end{equation*}

\proofpart{3}{$\gamma_{NL}>\gamma_{BG}$ and unbounded.}
Let $\mathcal{P}_D^{\infty}(f)=\langle \overline{\Lambda}_D^{\infty}(f),f\rangle$, i.e. the power product when the nonlinear material filling $D$ is replaced by a material with $\gamma=+\infty$. Combining Lemma 4.6 and Lemma 4.7, it holds
\begin{equation*}
    0\leq \mathcal{P}_D(f)-\mathcal{P}_{\varnothing}(f) \leq \mathcal{P}_D^{\infty}(f)-\mathcal{P}_{\varnothing}(f)\leq \alpha {\mathbb G}_{D}(f).
\end{equation*}

Hence,
\begin{equation*}
0\leq \lim_{n\to +\infty} \left[ \mathcal{P}_D(f_n)-\mathcal{P}_{\varnothing}(f_n) \right] \leq \alpha  \lim_{{n\to+\infty}}  {\mathbb G}_D(f_k)=0. 
\end{equation*}

\proofpart{4}{$\gamma_{NL}<\gamma_{BG}$ and possibly vanishing.}
Combining Lemma 4.9 and \Cref{in_ins_lemma} it holds
\begin{equation*}
    0\leq \mathcal{P}_{\varnothing}(f)-\mathcal{P}_D(f) \leq \mathcal{P}_{\varnothing}(f)-\mathcal{P}_D^{0}(f)\leq \alpha {\mathbb G}_{D}(f).
\end{equation*}

Hence,
\begin{equation*}
0\leq  \lim_{{n\to+\infty}} \left[ \mathcal{P}_{\varnothing}(f_n) - \mathcal{P}_D(f_n)\right] \leq \alpha  \lim_{{n\to+\infty}} {\mathbb G}_D(f_n)=0. 
\end{equation*}
\end{proof}
\begin{remark}
For a constant background, i.e. $\gamma_{BG}(x)=\gamma_{BG}$ constant, the depleting potentials can be found explicitly.    

The change of variable between the original reference system and the ad-hoc
reference system centered in $x_0$ (see \Cref{existence_H_thm}) is described by $x=x_{0}+\sum_{j=1}^{{N}}\xi _{j}\mathbf{\hat{%
\imath}}_{j}$.

Let the boundary data $f_n$\ be defined as the restriction to $\partial
\Omega $ of a plane wave exponentially attenuating in the $\mathbf{\hat{%
\imath}}_{{N}}$ direction:%
\[
f_{n}\left( x\right) =a_{n}e^{-\xi _{{N}}/\delta _{n}}\sin \left( \beta
_{1}\xi _{1}+\ldots +\beta _{{N}-1}\xi _{{N}-1}\right) \ \forall x\in \partial
\Omega , 
\]%
where $\delta _{n}=\delta /2^{n}$ and the $\beta _{j}$s are real and
constrained to satisfy%
\[
\beta _{1}^{2}+\ldots +\beta _{{N}-1}^{2}=\frac{1}{\delta _{{N}}^{2}}. 
\]

Since the background material property is constant, the solution of
(1.1) for $A=\varnothing $ is $u_{\varnothing }=a_{n}e^{-\xi _{{N}}/\delta
_{n}}\sin \left( \beta _{1}\xi _{1}+\ldots +\beta _{N-1}\xi _{N-1}\right) $.
The squared norm of $\mathbf{s}_{\varnothing }=-\nabla
u_{\varnothing }$ is%
\[
\left\vert \mathbf{s}_{\varnothing }\right\vert ^{2}=\frac{a_{n}^{2}}{\delta
_{n}^{2}}e^{-2\xi _{{N}}/\delta _{n}}. 
\]%
Hence, for the anomaly $D$
\[
{\mathbb G}_{D}\left( f_{n}\right) =\gamma _{BG}\int_{D}\left\vert \mathbf{s}%
_{\varnothing }\right\vert ^{2}\text{d}x=\gamma_{BG}\frac{a_{n}^{2}}{\delta
_{n}^{2}}\int_{D}e^{-2\xi _{{N}}/\delta _{n}}\text{d}x\leq \gamma_{BG}\frac{%
a_{n}^{2}}{\delta _{n}^{2}}e^{-2\delta /\delta _{n}}\int_{D}\text{d}x, 
\]%
while for $\Omega $%
\[
{\mathbb G}_{\Omega }\left( f_{n}\right) =\gamma _{BG}\int_{\Omega }\left\vert \mathbf{%
s}_{\varnothing }\right\vert ^{2}\text{d}x=\gamma_{BG}\frac{a_{n}^{2}}{%
\delta _{n}^{2}}\int_{\Omega }e^{-2\xi _{{N}}/\delta _{n}}\text{d}x\geq \gamma
_{BG}\frac{a_{n}^{2}}{\delta _{n}^{2}}e^{-2\frac{\delta }{2}/\delta
_{n}}\int_{\Omega _{\frac{\delta }{2}}}\text{d}x, 
\]%
where $\Omega _{\frac{\delta }{2}}=\Omega \cap \left\{ x_{0}+\xi _{{N}}\mathbf{%
\hat{\imath}}_{{N}}\in 
\mathbb{R}
^{{N}}|\xi _{{N}}<\delta /2\right\} $. Therefore, since%
\[
\frac{{\mathbb G}_{D}\left( f_{n}\right) }{{\mathbb G}_{\Omega }\left( f_{n}\right) }\leq
e^{-\delta /\delta _{n}}\frac{m\left( D\right) }{m\left( \Omega _{\frac{%
\delta }{2}}\right) }, 
\]%
it results that%
\[
   \lim_{n\rightarrow +\infty }\frac{{\mathbb G}_{D}\left( f_{n}\right) }{{\mathbb G}_{\Omega
}\left( f_{n}\right) }=0, 
\]
as for equation \eqref{eqn_limb}.
The coefficient $a_{n}$ is chosen to impose (\ref{min1ass1_thm}):%
\[
a_{n}^{2}=\frac{\delta _{n}^{2}}{\gamma _{BG}\int_{\Omega }e^{-2\xi
_{{N}}/\delta _{n}}\text{d}x}. 
\]
\end{remark}

\section*{Acknowledgments}
This work has been partially supported by the Italian Ministry of University and Research (projects n. 20229M52AS and n. 2022Y53F3X, PRIN 2022), and by GNAMPA of INdAM.

The Authors would like to thank the Journal for the high professionalism of the peer-review process, and the Reviewers for their valuable comments.

\section*{Authorship contribution statement}

{\bf A. Corbo Esposito, A. Tamburrino}: Conceptualization, Methodology, Formal analysis, Writing, Supervision.

{\bf L. Faella, V. Mottola, G. Piscitelli, R. Prakash}: Conceptualization, Methodology, Formal analysis, Writing. 


\section*{Data availability statement}
No new data were created or analysed in this study.

\bibliographystyle
{siam}
\bibliography{references}
\end{document}